% submitted on arXiv on 11 Aug 2017

\documentclass{article}

\usepackage{amsmath}
\usepackage{amsfonts}
\usepackage{amsthm}
\usepackage{tikz-cd}
\usepackage{booktabs}
\usepackage{enumerate}
\usepackage{graphicx}
\usepackage{etoolbox}
\usepackage[normalem]{ulem}
% Note: the following might create problems in arXiv
%\usepackage{hyperref}

\newtoggle{arxiv}
\toggletrue{arxiv}
% Controls whether differences between arxiv and non-arxiv versions are colored 
% magenta
\newtoggle{colorarxivdifferences}
\toggletrue{colorarxivdifferences}

\newcommand{\Pb}{{\mathbb P}}
\newcommand{\R}{{\mathbb R}}
\newcommand{\E}{{\mathbb E}}
\newcommand{\ed}{{\mathrm d}}

\newcommand{\Vol}{{\hbox{Vol}}}

\newcommand{\bx}{{\bar X}}
\newcommand{\tx}{{\tilde X}}

\newcommand{\Ti}{\mathbb{T}}

\newcommand{\horzvect}[2]{\left( #1, #2  \right)}
\newcommand{\url}[1]{{\tt{ #1 }}}

\newcommand{\arxivhighlight}[1]{\iftoggle{colorarxivdifferences}{{\color{black}{#1}}}{}}
\newcommand{\ifarxiv}[2]{\iftoggle{arxiv}{\arxivhighlight{#1}}{\arxivhighlight{#2}}}

%\urlstyle{same}

\newtheorem{lemma}{Lemma}
\newtheorem{theorem}{Theorem}
\newtheorem{proposition}{Proposition}

\theoremstyle{definition}
\newtheorem{definition}{Definition}
\newtheorem{example}{Example}

\newtheorem{corollary}{Corollary}

\newcommand{\jet}{\  \begin{tikzpicture}\draw[->]{(0,0) arc (210:330:0.3)}; \end{tikzpicture}\  }
%%(-30:30:2)  -30 degress to 30 degress, radius 2
%%\newcommand{\jet}{\raisebox{\depth}{\scalebox{1}[-1]{\curvearrowright}}}

\begin{document}

\title{{\normalsize Updated version``Intrinsic SDEs as jets'' in {\emph{Proceedings of the Royal Society A}}, February 2018, {\small DOI 0.1098/rspa.2017.0559}} \\ \hspace{1cm} \\ 
 {\bf \Large Coordinate-free  Stochastic Differential Equations as Jets}}
\author{John Armstrong \\ Dept. of Mathematics \\ King's College London \\ {\small \tt{john.1.armstrong@kcl.ac.uk}} \and Damiano Brigo 
\\ Dept. of Mathematics \\ Imperial College London \\ {\small \tt{damiano.brigo@imperial.ac.uk}}
}

\date{First version: October 1, 2015. This version: \today}

\maketitle

\begin{abstract}
We explain how It\^o Stochastic Differential Equations (SDEs) on manifolds may be defined using 2-jets of smooth functions. We show how this relationship can be interpreted in terms of a convergent numerical scheme. We show how jets can be used to derive graphical representations of It\^o SDEs. We show how jets
can be used to derive the differential operators associated with SDEs
in a coordinate free manner. We relate jets to vector flows, giving a
geometric interpretation of the It\^o--Stratonovich transformation. 
We show how percentiles can be used to give an alternative
coordinate free interpretation of the coefficients of one dimensional SDEs.
We relate this to the jet approach. This allows us to interpret the coefficients of SDEs in terms of ``fan diagrams''. In particular
the median of a SDE solution is associated to the drift of the SDE in Stratonovich form for small times.
\end{abstract}
%%%%%%%%%%%%%%%%%%%%%%%%%%%

\medskip

{\bf Keywords:} Jets, 2-jets, stochastic differential equations, manifolds, SDEs on manifolds, stochastic differential geometry, mean square convergence, numerical schemes, Euler scheme, 2-jet scheme, SDEs drawings, It\^o calculus, Stratonovich calculus, Fan Diagram, SDE median, SDE mode, SDE quantile, Forward Kolmogorov Equation, Fokker Planck Equation, 2-Jet Scheme Convergence, Coordinate-Free SDEs.

\medskip

AMS classification codes: 58A20, 39A50, 58J65,  60H10,  60J60, 65D18  
% 39A50: Stoch Difference Eq
% 58A20   Jets 
% 58J65   Diffusion processes and stochastic analysis on manifolds  
% 60H10 Stochastic ordinary differential equations 
% 60J60  Diffusion processes  
% 65D18   Computer graphics, image analysis, and computational geometry  

%\tableofcontents

\newpage

%%%%%%%%%% Insert the texts which can accomdate on firstpage in the tag "fmtext" %%%%%

%\begin{fmtext}
\section{Introduction}

Stochastic Differential Equations  (SDEs) on manifolds were first defined by It\^o in \cite{ito2}. It\^o's formulation was given in terms of coordinate charts. This has lead many authors to seek
coordinate free formulations of SDEs on manifolds. We will describe such
a formulation in the language of $2$-jets \cite{saunders}. We will study how this
formulation gives rise to intuitive graphical representations of SDEs.

Coordinate free formulations of SDEs have been given previously. One approach is to use Stratonovich calculus (see \cite{elworthy,elworthybook,rogers}). Another is the theory of second order tangent vectors, diffusors and Schwartz morphism (see \cite{emery, emeryhal}). A third is via the It\^o bundle (see \cite{belopolskaja}, \cite{gliklikh} or the appendix in \cite{brzezniak}).

The value of the $2$-jet approach is that it is particularly simple
and intuitive. In particular, as the notion of a jet is already familiar 
to differential geometers, we do not need to introduce novel differential geometric constructs.
%\end{fmtext}

%%%%%%%%%%%%%%% End of first page %%%%%%%%%%%%%%%%%%%%%

\maketitle

In Section \ref{mainSection} we will give an informal description of
the definition of SDEs in the language of $2$-jets. This description
does not require the reader to have prior experience of SDEs (though we do
assume they know the definition of Brownian motion). \ifarxiv{For the reader convenience, in Appendix \ref{appendix:classicalFormulation} we present a quick informal introduction to SDEs in It\^o or Stratonovich form, the two mainstream stochastic calculi, including the way to switch from one calculus to the other one, namely the It\^o-Stratonovich transformation. \\ 
It is well known that vectors on a manifold can be understood from a number of different perspectives.
Firstly tangent vectors can be defined in terms of local coordinates and their transformation rules. Secondly
one can define vectors in terms of operators, specifically as linear functions on the space of germs
of smooth functions that also obey the Leibniz rule. A third definition of a vector is as a first order approximation to a curve on a manifold. Fourth, one can understand a vector field as an infinitesimal diffeomorphism of a manifold. Since in addition vector fields can be interpreted as ordinary differential equations (ODEs) on a manifold, this gives four ways of understanding ODEs.

In this paper we will study the  parallel interpretations for SDEs. The first approach was used by It\^o in \cite{ito2} to define SDEs on manifolds. The second approach is coordinate free and is related to understanding SDEs in terms of diffusion operators (see \cite{friedmansdes} for the case of $\R^n$). The third approach corresponds to our interpretation of SDEs in terms of jets.
The fourth approach corresponds to Stratonovich calculus. Many texts such as \cite{elworthy,elworthybook,rogers}, use Stratonovich calculus to define SDEs on manifolds.\\
}{} 
For simplicity we first consider the case of an SDE driven by a single Brownian motion. Our description of SDEs is given by writing down a system of difference equations using a coordinate free notation. A formal proof that the solutions of these equations converge to the solutions of the classically defined It\^o SDEs is given in Appendix \ref{proofsSection}.

We also consider how SDEs can be understood graphically. In particular we will see how $2$-jets allow us
to draw an SDE in a way
that makes the transformation law of SDEs, known as It\^o's lemma, intuitively clear. We will illustrate a way
of drawing an SDE on a rubber sheet such that if the sheet is stretched,
the diagram transforms according to It\^o's lemma. In other words given
an SDE in $\R^n$ we give a method of drawing SDEs such that for all 
well-behaved $f:\R^n \to \R^n$
the following diagram commutes.
\begin{equation}\label{fig:diagram}
\begin{tikzcd}
\hbox{SDE for X} \arrow{r}{\hbox{It\^o's lemma}} \arrow[swap]{d}{\hbox{Draw}} & \hbox{SDE for $f(X)$} \arrow[swap]{d}{\hbox{Draw}} \\
\hbox{Picture of SDE for X in $\R^n$} \arrow{r}{f} & f( \hbox{Picture of SDE for X})
\end{tikzcd}
\end{equation}
Moreover, we will show how the language of $2$-jets allows us to write a particularly elegant formulation of It\^o's lemma.

In Section \ref{differentialGeometrySection} we describe the relationship between the jet formulation and differential operator formulations
of SDEs.
We use the language of jets to give geometric expressions
for many important concepts that arise in stochastic analysis. These geometric representations are in many ways more elegant than the traditional representations in terms of the coefficients of SDEs. In particular we will give coordinate free formulations of the following: the diffusion operators; It\^o SDEs on manifolds and Brownian motion
on Riemannian manifolds.

In Section \ref{drawingVectorSDEs}, we return to the question of graphical representations of SDEs. We show how to represent processes driven by multiple Brownian motions. We illustrate this using the Heston stochastic volatility model (two-dimensional diffusion) and Brownian motion on the torus.

In Section \ref{sec:strato} we consider how our formulation is related to the
Stratonovich formulation of SDEs. We will prove that sections of the bundle of $n$-jets of curves
in a manifold correspond naturally to $n$-tuples of vector fields in the manifold. When translated into a statement about SDEs, the special
case when $n=2$ can be interpreted as the correspondence between It\^o calculus and Stratonovich calculus. 

In Section \ref{fanDiagramSection} we consider an alternative approach to
understanding the coefficients of 1-dimensional SDEs based on the coordinate free notion
of percentiles. We will see that the $2$-jet defining an It\^o SDE can be interpreted as
defining a {\em fan diagram} showing the limiting trajectories of certain percentiles of
the probability distributions associated with the SDE solution process. Moreover we will show that the drift of the Stratonovich formulation can be similarly interpreted as a short-time approximation
to the median. We also consider short time behaviour of the mode.

Our work has a number of applications. Firstly graphical representations of SDEs
should be a valuable tool for the qualitative analysis of SDEs and for developing
an intuitive understanding of the properties of SDEs. Our illustrations of
It\^o's lemma give a first example of this. Secondly coordinate free formulations of 
SDEs will often be considerably simpler than 
local coordinate formulations and hence should assist in the theoretical development of stochastic differential geometry. An example of this is given in \cite{armstrongBrigoOptimalProjectionArxiv,armstrongbrigoicms} where a
notion of projection for SDEs is defined using the $2$-jet approach. It is considerably easier to
understand this notion using jets than with a local coordinate formulation.
A further application is given in \cite{armstrongMijatovic} where the jet approach is used to numerically solve SDEs on manifolds. We hope in future work to give applications of this method to statistics similar to those given in \cite{byrneGirolami}.

\ifarxiv{}{\pagebreak}

\section{SDEs as fields of curves driven by a single Brownian motion}
\label{mainSection}

\subsection{Drawing and simulating SDEs as ``fields of curves"}

Suppose that at every point $x$ in $\R^n$ we have an associated smooth curve
\[
\gamma_x: \R \to \R^n \quad \hbox{ with } \gamma_x(0)=x.
\]
As an example we might define $\gamma^E_x$ on $\R^2$ as follows
\[ \gamma^E_{(x_1,x_2)}(t) = (x_1,x_2) + t(-x_2,x_1) + 3 t^2(x_1,x_2 ). \]
We will use the superscript $E$ to indicate this example curve throughout. This field of curves is plotted in Figure \ref{fig:exampleSDE}.

To be precise we
have taken a grid of points in $\R^2$ which are marked as dots in the figure.
We have then drawn the curve $\gamma^E_x$
at each grid point $x$ for the parameter values $t$ in $(-0.1, 0.1)$.
In general when
drawing such a figure for a general $\gamma$, one should use the same
range $t \in (-\epsilon,\epsilon)$ for every curve in the figure, but one is
free to choose $\epsilon$ to make the diagram visually appealing. (In just the same
way when drawing vector fields, one chooses a sensible scale for each vector).

\begin{figure}[ht]
\centering
\begin{minipage}[m]{0.3\linewidth}
\includegraphics[width=0.95\linewidth]{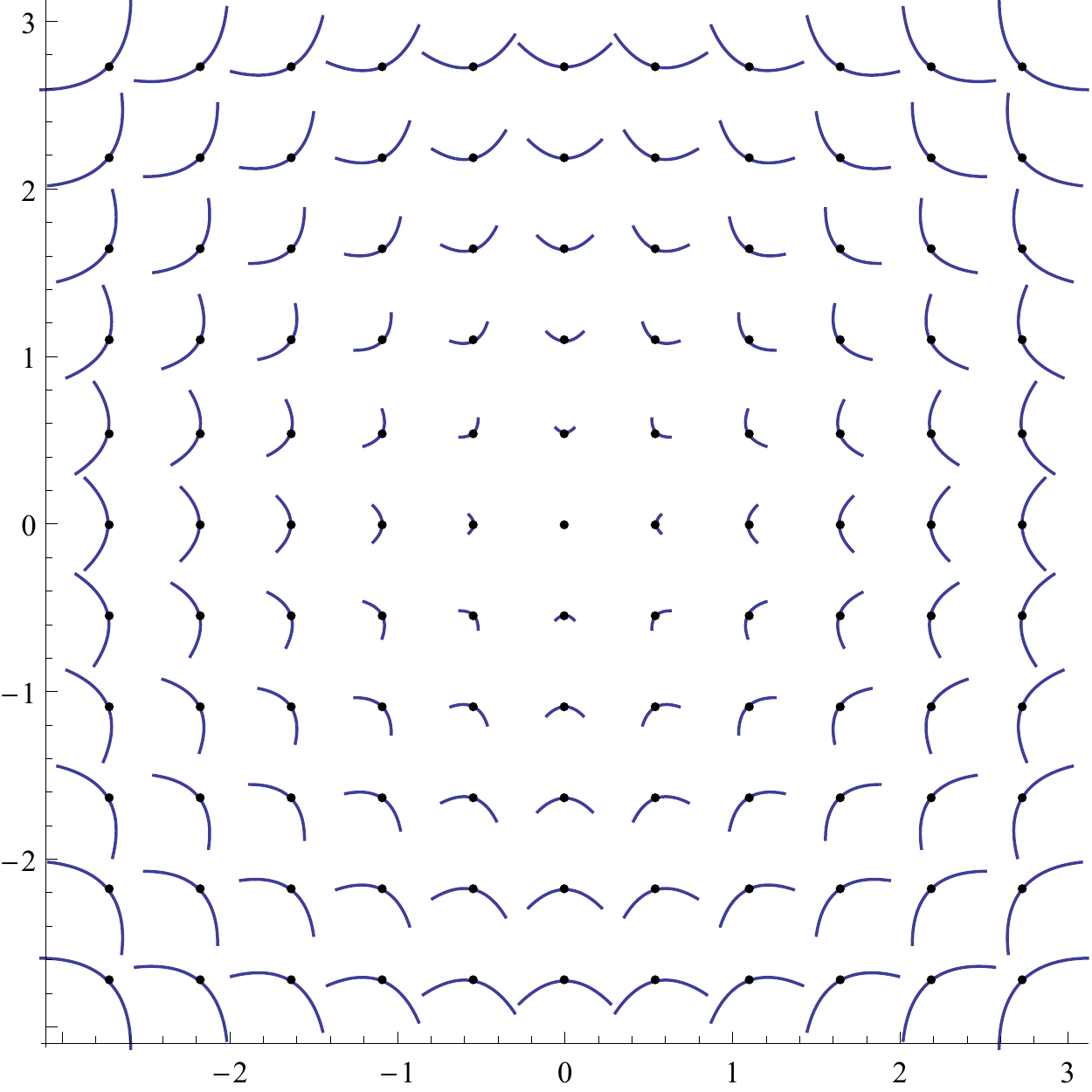}
\caption{A plot of $\gamma^E$}
\label{fig:exampleSDE}
\end{minipage}
\hfill
\begin{minipage}[m]{0.65\linewidth}
\begin{tabular}{cc}
\includegraphics[width=0.45\linewidth]{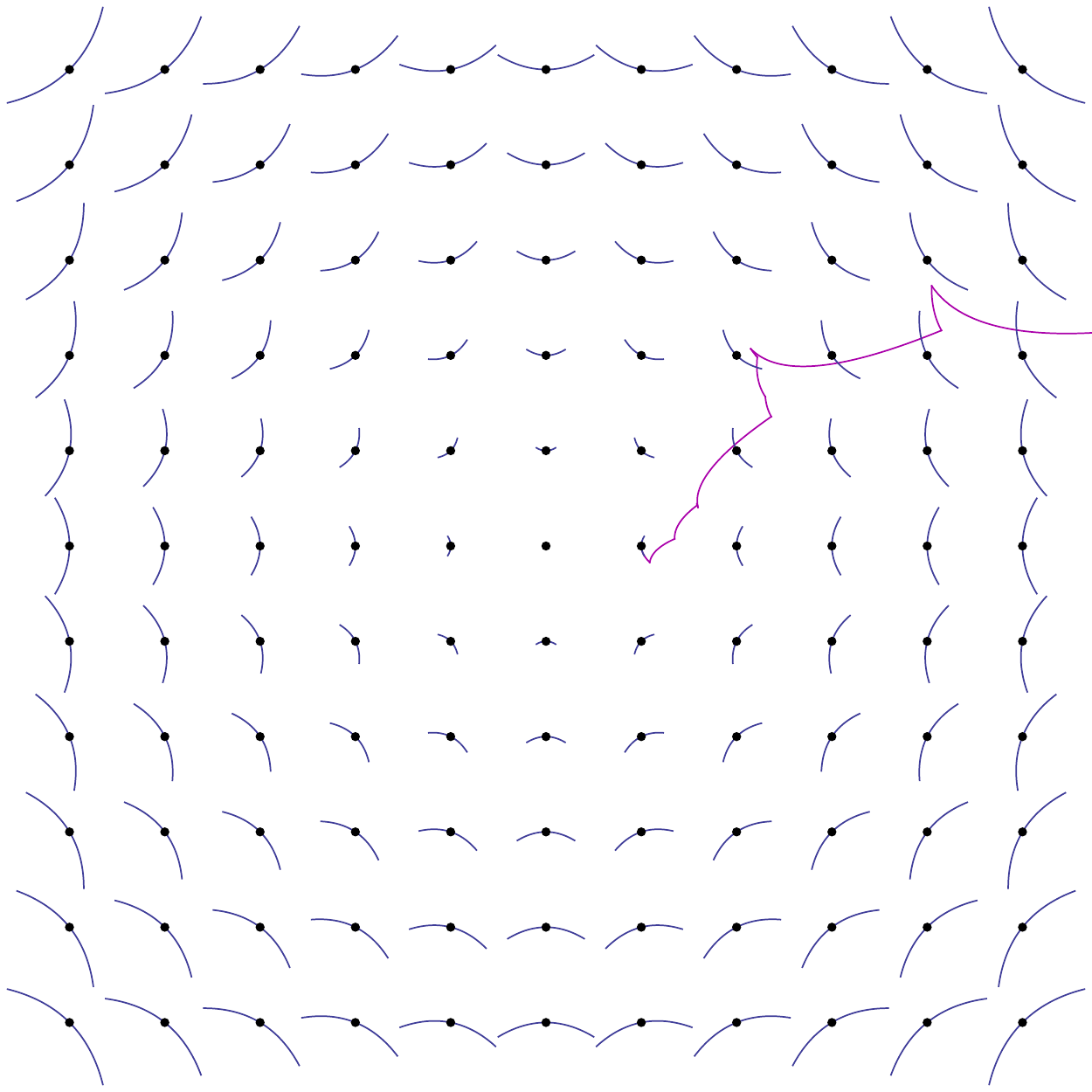} &  \includegraphics[width=0.45\linewidth]{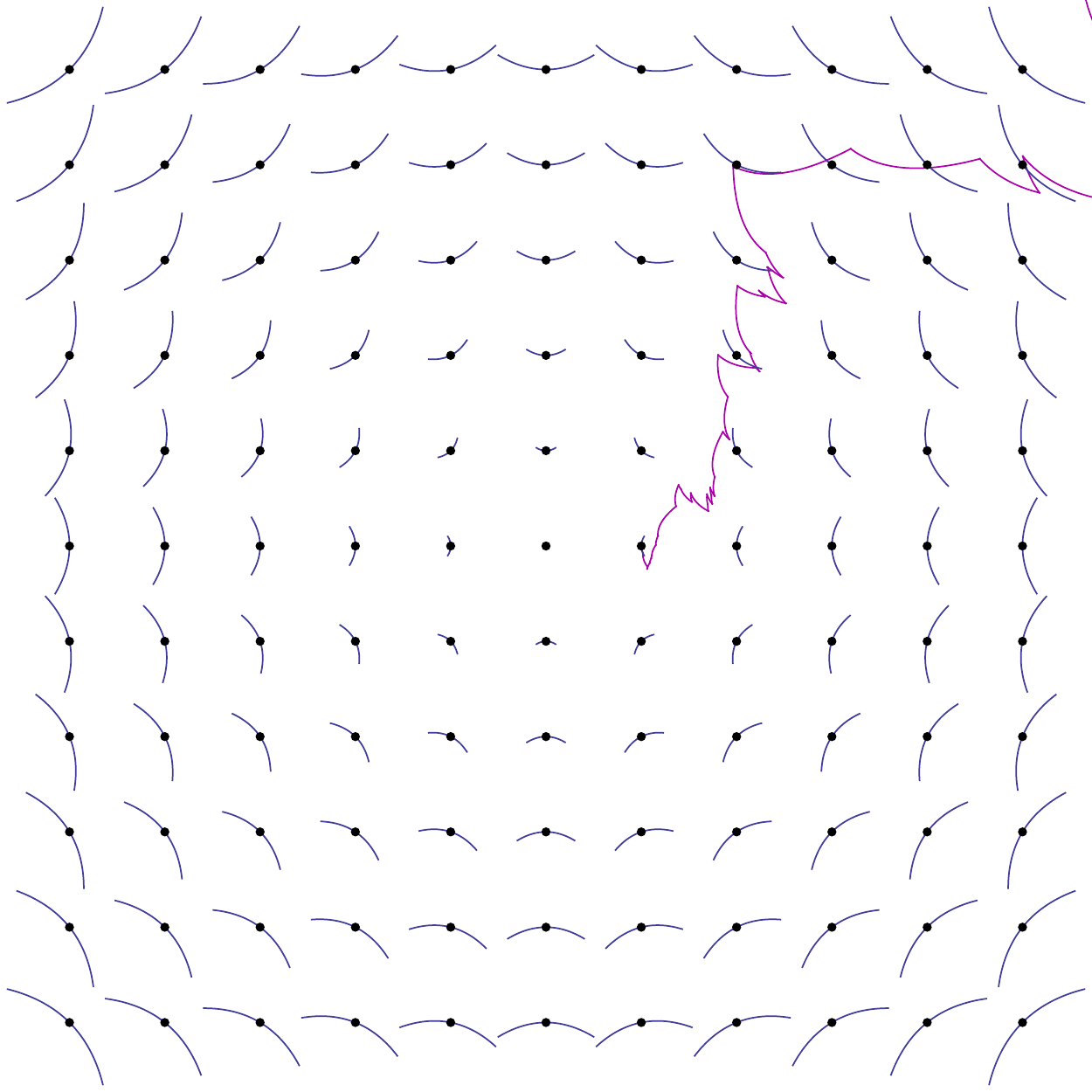} \\
$\delta t = 0.2 \times 2^{-5}$ & $\delta t = 0.2 \times 2^{-7}$ \\
\includegraphics[width=0.45\linewidth]{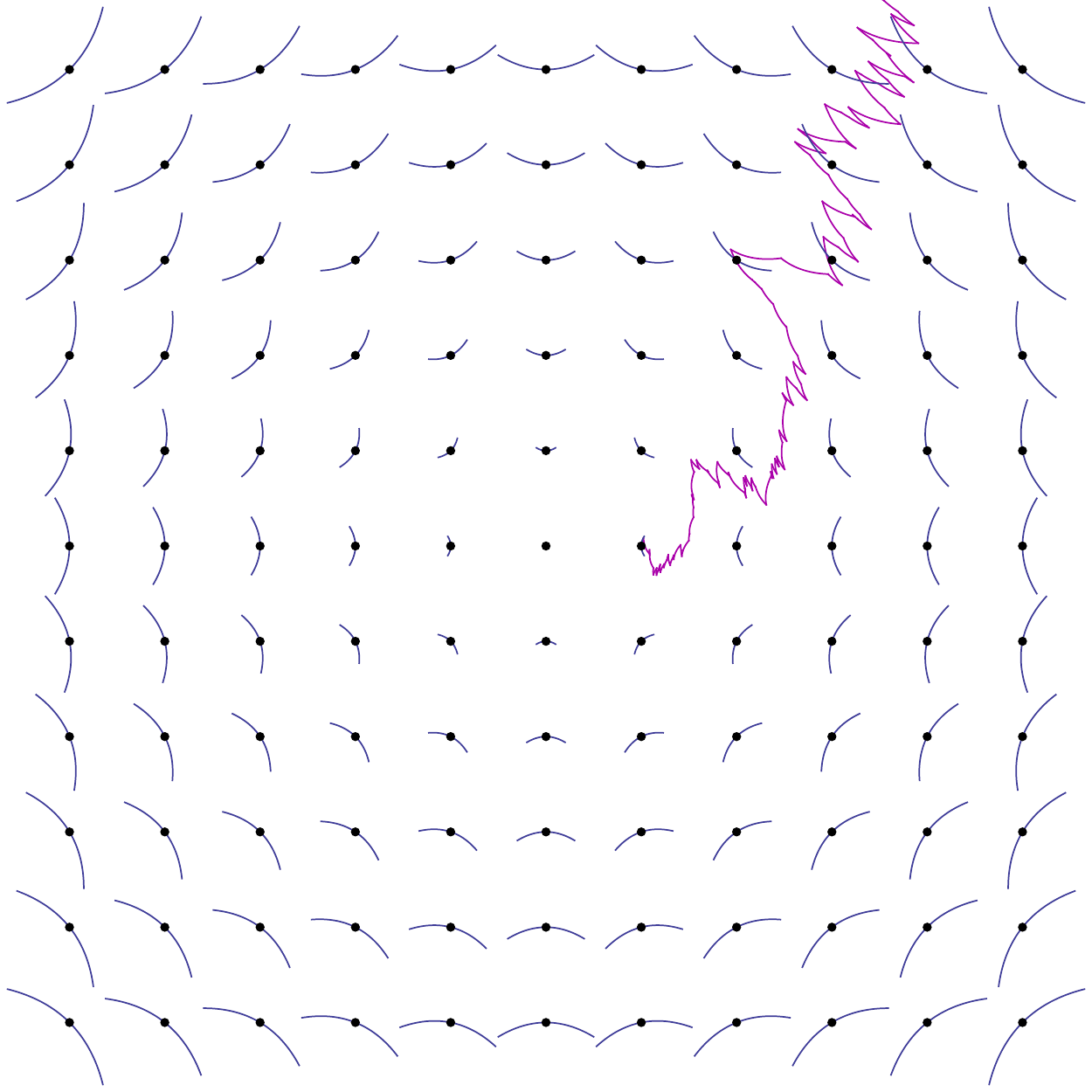} &  \includegraphics[width=0.45\linewidth]{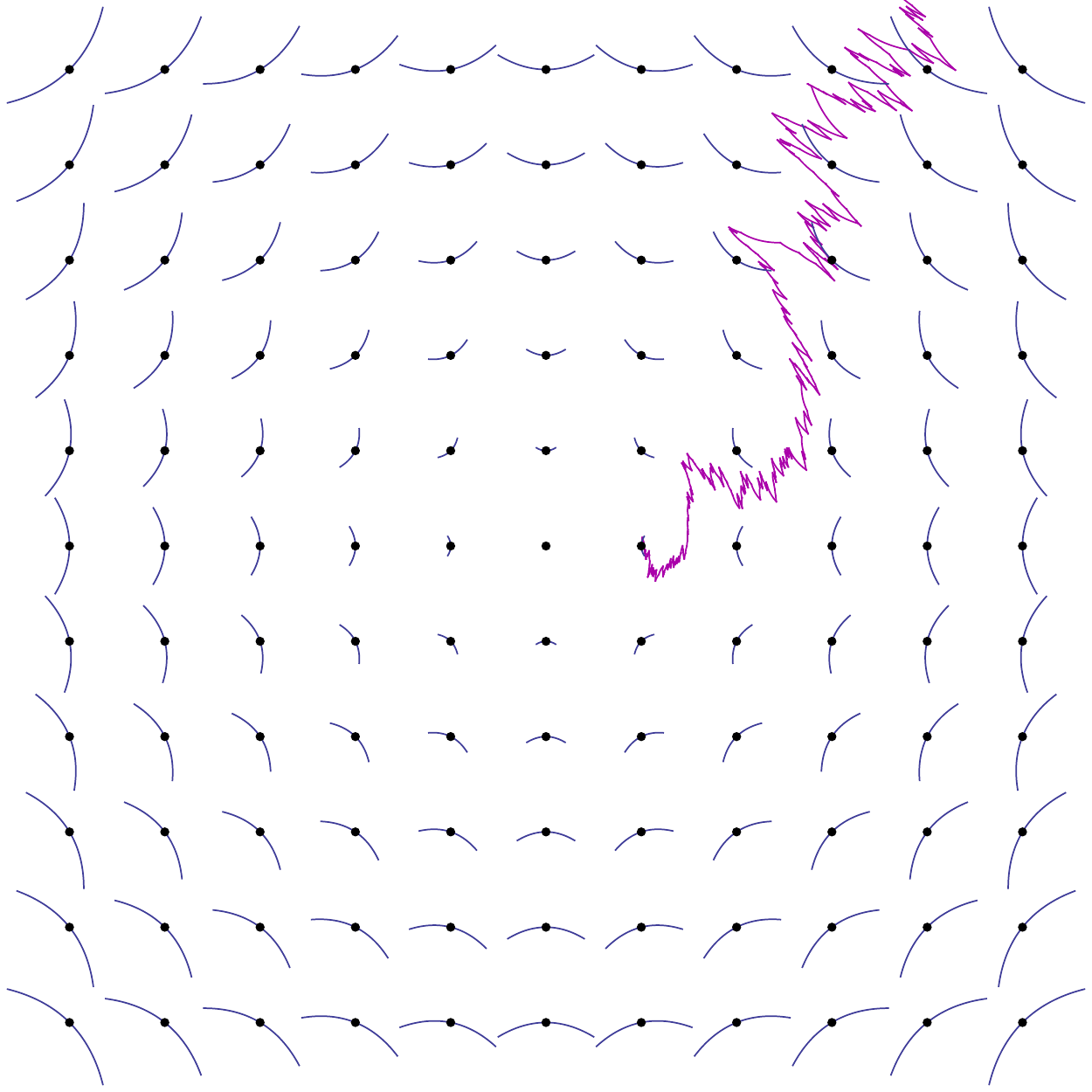} \\
$\delta t = 0.2 \times 2^{-9}$ & $\delta t = 0.2 \times 2^{-11}$ \\
\end{tabular}
\caption{Discrete time trajectories for $\gamma^E$ for a fixed $W_t$ and $X_0$ with different
values for $\delta t$}
\label{illustration}
\end{minipage}
\end{figure}

As can be seen
in the figure, our specific example, $\gamma^E$ has a circular symmetry. This arises from the radially outward $t^2$ component and the orthogonal counterclockwise circular component $t$. Our example has also been chosen to have zero derivatives with respect to $t$ from the third derivative on. This is because we will show how to define a stochastic process in terms of a field of curves $\gamma$ and we will see that the limiting behaviour of this process only depends upon the first and second order terms in $t$.

Given such a $\gamma$, a starting point $X_0$ (the deterministic $x_0 = (1,0)$ 
 in our example, $X_0=x_0$), a Brownian motion $W_t$ and a time step $\delta t$ we
can define a discrete time stochastic process using the following recurrence relation
\begin{equation}\label{discreteTimeEquation}
X_0 := x_0, \ \ \ 
X_{t+\delta t} :=  \gamma_{X_t}( W_{t+\delta t} - W_t ).
\end{equation}

In Figure \ref{illustration} we have plotted the trajectories of process for $\gamma^E$, the starting point $(1,0)$,
a fixed realization of Brownian motion and a number of different time steps. Rather than just
plotting a discrete set of points for this discrete time process, we have connected the points
using the curves in $\gamma^E_{X_t}$.

Notice that since the $\delta W_t = W_{t+\delta t} - W_t$ are normally distributed with standard deviation $\sqrt{\delta t}$
we can interpret the trajectories as being randomly generated trajectories that move
from $X_t$ to $X_{t+\delta t}$ by following the curve $s \mapsto \gamma_{X_t}(s)$ from $s=0$ to $s=\epsilon_t \sqrt{\delta t} $ where the $\epsilon_t$ are independent normally distributed random variables.

As the figure suggests,
these discrete time stochastic processes \eqref{discreteTimeEquation} converge in some sense to a limit as the time step tends to zero. 

We will use the following notation for the limiting process.
\begin{equation}
\mbox{Coordinate free SDE:} \ \ \ X_t \jet \gamma_{X_t}( \ed W_t ), \quad X_0 = x_0. 
\label{continuousTimeEquation}
\end{equation}
For the time being, let us simply treat equation \eqref{continuousTimeEquation} as a short-hand way
of saying that equation \eqref{discreteTimeEquation} converges in some sense to a limit. Note
that it will not converge for arbitrary $\gamma$'s but it does converge for nice $\gamma$ such as $\gamma^E$ or $\gamma$'s with sufficiently good regularity.
The reader familiar with It\^o calculus will want to know how this notation corresponds to It\^o stochastic differential equations and in precisely what sense  and under
what circumstances equation \eqref{discreteTimeEquation} converges to a limit. These questions
are addressed in Section \ref{sec:sdes2jets}.

An important feature of equation \eqref{discreteTimeEquation} is that it makes no reference
to the vector space structure of $\R^n$ for our state space $X$. We have maintained this in the formal notation used
in equation \eqref{continuousTimeEquation}. By avoiding using the vector space structure on
$\R^n$ we will be able to obtain a coordinate free understanding of stochastic differential equations.

\begin{example}
\label{powerExamples}
For a fixed $\alpha \in \mathbb{N}$, in a given coordinate system on $\R$, we can define curves at each point in $\R$ by:
\[
\gamma^\alpha_x(s) = x+s^\alpha
\]
Let us compute the limit of the discrete time process corresponding to these curves.
In the case $\alpha=1$, we have trivially that the $X_t=x_0+W_t$. 
By equation \eqref{discreteTimeEquation} we have that
\[
\begin{split}
X_{n\delta t} &= x_0 + \sum_{i=1}^n (W_{(i+1)\delta t}-W_{i \delta t})^\alpha \\
&= x_0 + (\delta t)^\frac{\alpha}{2} \sum_{i=1}^n \epsilon_i^\alpha
\end{split}
\]
where the $\epsilon_i$ are independently normally distributed with mean $0$ and standard deviation $1$.
Fixing a terminal time $T$ so that $\delta t=\frac{T}{n}$ we have
\[
X_{T} =
x_0 + (T/n)^\frac{\alpha}{2} \sum_{i=1}^n \epsilon_i^\alpha.
\]
By the strong law of large numbers we see that if $\alpha=2$ this converges a.s.  to $x_0+T$. If $\alpha\geq3$ we see that this converges a.s. to  $x_0$.
\end{example}

\subsection{SDEs as fields of curves up to order 2: 2-jets}\label{sec:sdes2jets}

Let us now invoke explicitly the $\R^n$ structure of the state space by choosing a specific coordinate system and consider the (component-wise) Taylor expansion of $\gamma_x$. We have
\[
\gamma_x(t) =  x + \gamma^\prime_x(0) t + \frac{1}{2}\gamma^{\prime \prime}_x(0) t^2 + R_x t^3, \ \ R_x = \frac{1}{6} \gamma_x^{\prime \prime \prime}(\xi), \ \ \xi \in [0,t],
\]
where $R_x t^3$ is the remainder term in Lagrange form. 
Substituting this Taylor expansion in our Equation (\ref{discreteTimeEquation}) we obtain
\begin{equation}
\delta X_t = \gamma^\prime_{X_t}(0) \delta W_t + \frac{1}{2}\gamma^{\prime \prime}_{X_t}(0) (\delta W_t)^2 +    R_{X_t} (\delta W_t)^3, \quad X_0 = x_0.
\label{continuousTimeEquationW2withR}
\end{equation}

Example \ref{powerExamples} suggests that we can replace the term $(\delta W_t)^2$ with $\delta t$
and we can ignore terms of order $(\delta W_t)^3$ and above. So we expect that under reasonable conditions, in the chosen coordinate system, the recurrence relation
given by \eqref{discreteTimeEquation}
will converge to the same limit as the numerical scheme
\[
\delta \bx_t =  \gamma^\prime_{\bx_t}(0) \delta W_t + \frac{1}{2}\gamma^{\prime \prime}_{\bx_t}(0) \delta t, \quad \bx_0 = x_0.
\]
Defining $a(X):= \gamma_X^{\prime \prime}(0)/2$ and $b(X):=\gamma_X^\prime(0)$ we have that this last equation can be written as
\begin{equation}
\delta \bx_t =  a(\bx_t) \delta t  + b(\bx_t) \delta W_t . 
\label{eulerScheme}
\end{equation}
It is well known that this last scheme (Euler scheme) does converge in some appropriate sense to a limit (\cite{kloedenAndPlaten}). This limit is more conventionally written as the solution to the It\^o stochastic differential equation
\begin{equation}
\ed \tx_t =  a(\tx_t) \, \ed t  + b(\tx_t) \ed W_t, \quad \tx_0 = x_0.
\label{itoSDE}
\end{equation}
The coefficient $a(\tx_t)$ is often referred to as the {\em drift}. The coefficient $b(\tx_t)$ is often referred to as the {\em diffusion coefficient} (also known as volatility in applications to social sciences).
Thus, given a coordinate system, we may think of equation \eqref{discreteTimeEquation} as defining a numerical scheme for approximating the It\^o SDE \eqref{itoSDE}. 
In this context we call \eqref{discreteTimeEquation} the $2$-jet scheme.  A rigorous proof of the convergence of the $2$-jet scheme
in mean square ($L^2(\mathbb{P})$) to the solution of the It\^o SDE, based on appropriate bounds on the derivatives of the curves $\gamma_x$,
is given in Appendix \ref{proofsSection}. This notion of convergence is  not fully coordinate independent, however in Appendix \ref{coordinateFreeConvergence} we describe a fully coordinate free notion of convergence which we call mean square convergence on compacts. Our proof of convergence in $L^2(\mathbb{P})$ imples that the $2$-jet scheme will always converge in mean square on compacts if the coefficients are sufficiently smooth.

At this point one may wonder in which sense Equation \eqref{discreteTimeEquation} and its limit are coordinate free. 
It is important to note that the coefficients of equation \eqref{itoSDE} only depend upon the first two derivatives of $\gamma$.
We say that two smooth curves $\gamma:\R \to \R^n$ have the same $k$-jet ($k \in \mathbb{N}, k>0$) if
they are equal up to order $O(t^k)$  in a given coordinate system. If this holds in a given coordinate system, it will hold in all coordinate systems. More generally we have:
\begin{definition} A $k$-jet of a function between smooth manifolds $M$ and $N$ is defined to be the equivalence class of all smooth maps $f:M\to N$ that are equal up to order $k$ in one, and hence all, coordinate systems.
\end{definition}
%Jets represent abstract Taylor expansions that allow one to approach differential equations on manifolds in a coordinate-free manner. Also, jets and the related notions of obstruction and symbol are powerful tools in the analysis of differential equations in a geometric setting \cite{saunders}. 

%This is similar to what is done to define tangent vectors, leading to a coordinate free definition.
Using this terminology, we say that the coefficients of equation \eqref{itoSDE} (and \eqref{eulerScheme}) are determined by the $2$-jet of a curve $\gamma:\R \to \R^n$ in a specific coordinate system. 
%Jets are defined in much more general terms on manifolds in differential geometry.
%, for example they can be defined as equivalence classes of maps with the same Taylor expansion up to a given order in given (and hence all) charts. 

In the light of the above convergence result, we can say that in pictures such as Figure \ref{fig:exampleSDE} one should avoid interpreting any details other than the first two derivatives of the curve. One way of doing this is by insisting that we draw the quadratic curves that best fit the curves $\gamma$ rather than the actual curve $\gamma$ itself. 

Notice that vectors can be defined in the same way as $1$-jets of smooth curves. In just the
same way as we draw quadratic curves in Figure \ref{fig:exampleSDE}, one normally chooses to draw straight lines in a diagram of a vector field.

In summary: vector fields are
fields of $1$-jets and they represent ODE's; diagrams such as Figure \ref{fig:exampleSDE} are pictures of fields of $2$-jets and they represent It\^o SDEs.

Given a curve $\gamma_x$, we will write $j_2(\gamma_x)$ for the two jet associated with $\gamma_x$. This is formally defined to be the equivalence class of all curves which are equal to $\gamma_x$ up to $O(t^2)$ included.

Since we will show that, under reasonable regularity conditions, the limit of the symbolic equation \eqref{continuousTimeEquation} depends only on the $2$-jet of the driving curve, we may rewrite equation \eqref{continuousTimeEquation} as
\begin{equation}
\mbox{Coordinate-free 2-jet SDE:} \hspace{1cm} X_t \jet j_2(\gamma_{X_t})( \ed W_t ), \quad X_0 = x_0. 
\label{continuousTimeEquationJet}
\end{equation}
This may be interpreted either as a coordinate free notation for the classical It\^o SDE given by equation \eqref{itoSDE}
or as a shorthand notation for the limit of the process given by the discrete time equation \eqref{discreteTimeEquation}.

\subsection{Coordinate-free It\^o formula with jets}

Suppose that $f$ is a smooth mapping from $\R^n$ to itself and suppose that $X$ satisfies
(\ref{discreteTimeEquation}). It follows that $f(X)$ satisfies
\[
 (f(X))_{t+\delta t} = f \circ \gamma_{X_t}( \delta W_t ) .
\]
Taking the limit as $\delta t$ tends to zero we have:
\begin{lemma}\label{lemma:ito}[It\^o's lemma --- coordinate free formulation]
If the process $X_t$ satisfies
\[
X_t \jet j_2(\gamma_{X_t})( \ed W_t )
\]
then, writing $\times$ for the Cartesian product of functions,  $(X_t,f(X_t))$ satisfies
\[
(X_t,f(X)_t) \jet j_2((\gamma_{X_t} \times f \circ \gamma_{X_t}))( \ed W_t )  .
\]
We might also write more directly, with abuse of notation, $f(X)_t \jet j_2( f \circ \gamma_{X_t})( \ed W_t )$.
\end{lemma}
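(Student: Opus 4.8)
The plan is to reduce the statement to the exact discrete-level identity already recorded just before the lemma and then pass to the limit using the convergence result of Appendix~\ref{proofsSection}. First I would note that the substantive object is the \emph{joint} process $(X_t,f(X_t))$: the field of curves $x\mapsto \gamma_x\times(f\circ\gamma_x)$ is indexed by $X_t$, i.e.\ by the first component, and it starts at $(\gamma_x(0),f(\gamma_x(0)))=(x,f(x))$, so it is a genuine, autonomous field of curves living on the graph $\{(x,f(x))\}$. The final ``direct'' formula for $f(X)_t$ alone is then the projection onto the second factor; this is the ``abuse of notation'' because $f\circ\gamma_{X_t}$ is parametrised by $X_t$ rather than by $f(X_t)$, and so it defines an SDE in its own right only when $f$ is invertible.

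Concretely, starting from the $2$-jet scheme \eqref{discreteTimeEquation}, $X_{t+\delta t}=\gamma_{X_t}(\delta W_t)$, I apply $f$ to both sides to get the exact identity $f(X_{t+\delta t})=(f\circ\gamma_{X_t})(\delta W_t)$. Pairing this with the original recurrence gives
\[
(X_{t+\delta t},f(X_{t+\delta t}))=\bigl(\gamma_{X_t}\times(f\circ\gamma_{X_t})\bigr)(\delta W_t),
\]
so the joint process is \emph{exactly} the $2$-jet scheme \eqref{discreteTimeEquation} for the product field of curves. Passing to the limit $\delta t\to 0$ and invoking that, under suitable regularity, the limit depends only on the $2$-jet of the driving curve, the joint scheme converges to $(X_t,f(X)_t)\jet j_2(\gamma_{X_t}\times f\circ\gamma_{X_t})(\ed W_t)$, which is the claim; dropping the first factor (with the caveat above) yields the direct form.

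As a sanity check that this is genuinely It\^o's lemma I would compute the $2$-jet of $f\circ\gamma_x$ in coordinates. Writing $b(x)=\gamma_x'(0)$ and $a(x)=\tfrac12\gamma_x''(0)$ as in \eqref{eulerScheme}, the chain rule gives $(f\circ\gamma_x)'(0)=Df(x)\,b(x)$ and $(f\circ\gamma_x)''(0)=D^2f(x)\bigl(b(x),b(x)\bigr)+2\,Df(x)\,a(x)$; hence the diffusion and drift read off from $j_2(f\circ\gamma_x)$ are $Df(x)\,b(x)$ and $Df(x)\,a(x)+\tfrac12 D^2f(x)\bigl(b(x),b(x)\bigr)$, exactly the coefficients produced by the classical It\^o formula applied to \eqref{itoSDE}. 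In particular the It\^o correction $\tfrac12 D^2f(b,b)$ emerges automatically as the second-derivative (``curvature'') contribution to the composite curve, which is precisely the elegance the jet formulation is meant to display.

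The main obstacle I expect is not the algebra, which is immediate, but the clean justification of the limit for the product field of curves: the convergence theorem of Appendix~\ref{proofsSection} is established under global regularity bounds, whereas $f$ is only assumed smooth, so $f\circ\gamma$ need not have globally bounded derivatives. The honest fix is to pass to the coordinate-free notion of mean square convergence on compacts described in Appendix~\ref{coordinateFreeConvergence}, under which smoothness of $f$ together with the regularity of $\gamma$ suffices to make diagram \eqref{fig:diagram} commute for all well-behaved $f$; I would therefore flag that the limit in the lemma should be read with this localized notion of convergence.
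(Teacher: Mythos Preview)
Your proposal is correct and follows essentially the same approach as the paper: the paper's argument is simply the one-line discrete identity $(f(X))_{t+\delta t}=f\circ\gamma_{X_t}(\delta W_t)$ stated just before the lemma, followed by ``taking the limit as $\delta t$ tends to zero'', and then the chain-rule computation you give as a sanity check appears immediately after the lemma to confirm equivalence with the classical formula. If anything you are more careful than the paper, which does not flag the regularity issue with the limit or invoke Appendix~\ref{coordinateFreeConvergence} explicitly at this point.
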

If one prefers the more traditional format for SDEs given in \eqref{itoSDE} we simply
need to calculate the derivatives of $f \circ \gamma$. In a chosen coordinate system, let us write $\gamma_X^i$ for
the $i$-th component of $\gamma_X$ with respect to the coordinates $x_1, x_2, \ldots x_n$
for $\R^n$. Two applications of the chain rule give
\[
\begin{split}
(f \circ \gamma_X)^\prime(t) &= \sum_{i=1}^n \frac{\partial f}{\partial x_i}( \gamma_X(t)) \frac{\ed \gamma_X }{\ed t}  \\
(f \circ \gamma_X)^{\prime \prime}(t) &= \sum_{j=1}^n \sum_{i=1}^n \frac{\partial^2 f}{\partial x_i \partial x_j}( \gamma_X(t)) \frac{\ed \gamma^i_X }{\ed t} \frac{\ed \gamma^j_X }{\ed t} \\
&\quad + \sum_{i=1}^n \frac{\partial f}{\partial x_i}( \gamma_X(t)) \frac{\ed^2 \gamma_X }{\ed t^2}
\end{split}
\]
We conclude that our lemma is equivalent to the classical It\^o's lemma.

{\emph{We can now interpret It\^o's lemma geometrically as the statement that the transformation
rule for jets under a change of coordinates is given by composition of functions.}}

Since we now understand the geometric content of It\^o's lemma, we can draw a picture to illustrate it. Consider the transformation $\phi:\R^2 / \{0\} \to [-\pi,\pi] \times \R$ by
\[(\theta,s) = \phi(x_1,x_2)  = \left(\mbox{arctan}(x_2/x_1), \log(\sqrt{x_1^2+x_2^2})\right),\]
or equivalently \[
\phi( \exp(s) \cos(\theta), \exp(s) \sin(\theta) ) = (\theta,s),
\]
applied to our example process $\gamma^E$. This can be viewed as a transformation of the complex plane $\phi(z)=i \log(z)$. 
We use $\phi$ to transform the bottom right picture in Figure \ref{illustration}
in two different ways. Firstly we apply directly $\phi$ to each point of Figure \ref{illustration} to obtain a new point to be inserted in a new figure. This is done by using image manipulation software. In other words we stretch the image without any consideration of its mathematical structure.  The result of this is shown in the left hand side of Figure \ref{transformedPictures}.

\begin{figure}[ht]
\centering
\includegraphics[width=0.4\linewidth]{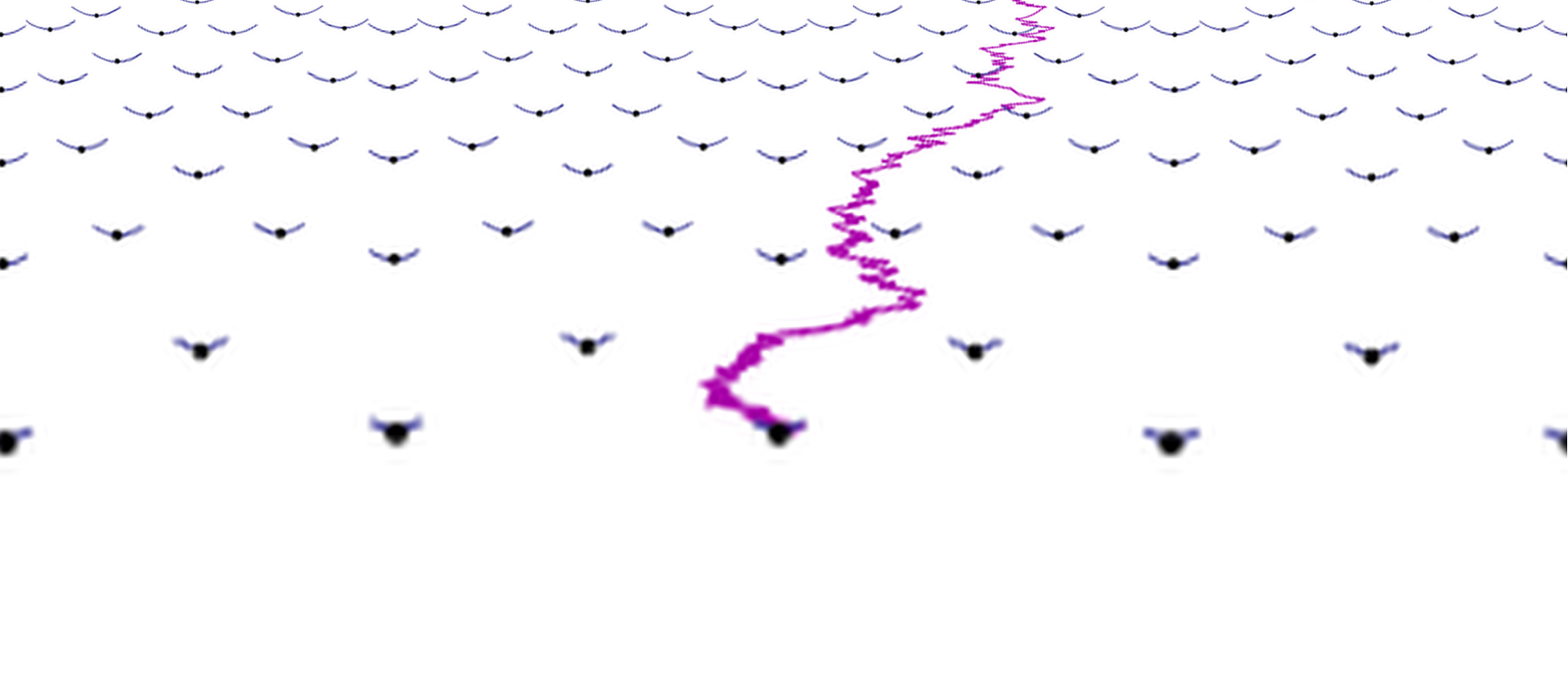}
\includegraphics[width=0.4\linewidth]{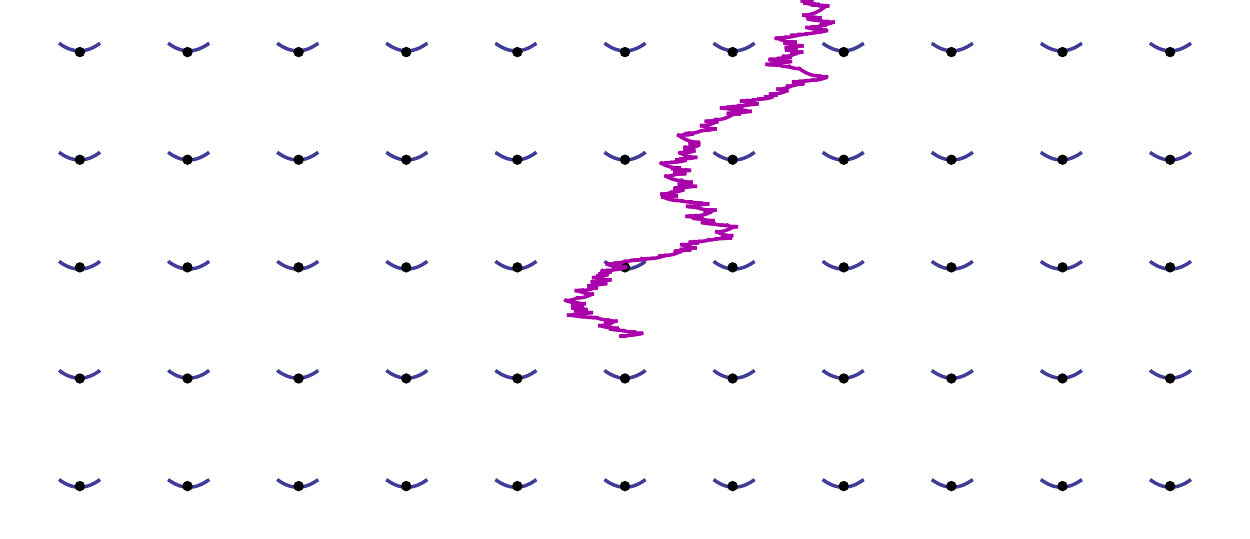} 
\caption{Two plots of the process $j_2(\phi \circ \gamma^E)$ in the plane $(\theta,s)$. The left plot was generated by transforming the image
in Cartesian coordinates pixel by pixel. The right was computed using It\^o's lemma.}
\label{transformedPictures}
\end{figure}

As an alternative approach, we transform our equation using It\^o's lemma applied
to the function $\phi$. So equation \eqref{eq:exampleSDECartesianCoordinates} below for $(X_1, X_2)$ transforms to the equation \eqref{eq:exampleSDEPolarCoordinates} for $(\theta, s)$.
\begin{equation}
d \horzvect{X_1}{X_2} = 3 \horzvect{X_1}{X_2} dt + \horzvect{-X_2}{X_1} dW_t,
\label{eq:exampleSDECartesianCoordinates}
\end{equation}
\begin{equation}
d \horzvect{\theta}{s} = 3 \horzvect{0}{\frac{7}{2}} dt + \horzvect{1}{0} dW_t.
\label{eq:exampleSDEPolarCoordinates}
\end{equation}
We can then use this equation to plot the process $(\theta, s)$ directly by simulating the process in discrete
time as before. The result is shown in the right hand side of Figure \ref{transformedPictures}.

As one can see the two approaches to plotting the transformed process
give essentially identical results, showing an example of our earlier diagram \eqref{fig:diagram} at work. The differences one can see are:
the lower quality in the left image, obtained by transforming pixels rather than using vector graphics; the grid points
at which the $2$-jets are plotted are changed; small differences in the simulated
path since we have only simulated discrete time paths.

We have assumed that the $2$-jet $j_2(\gamma_x)$
is associated in a deterministic and time independent manner with the point $x$.
However, we are investigating how the theory can be generalized to time dependent and stochastic choices of $2$-jets.

\subsection{SDEs driven by vector-Brownian motion as 2-jets}\label{sec:sdesmultiplebrownians}
Consider jets of functions of the form
\[
\gamma_x: \R^d \to \R^n.
\]
Just as before we can consider discrete time difference equations of the form
\begin{equation}
 X_{t+\delta t} := \gamma_{X_t}\left( \delta W^1_t, \ldots, \delta W^d_t 
\right),
\label{vectorJetEquation}
\end{equation}
or, if writing $\delta W_t^\alpha = \epsilon_\alpha \sqrt{\delta t}$, with $\epsilon$ independent normals,  
\[  X_{t+\delta t} := \gamma_{X_t}\left( \epsilon_1 \sqrt{\delta t}, \ldots, \epsilon_d \sqrt{\delta t} \right) .  \]

Again, the limiting behaviour of such difference equations will only depend upon the $2$-jet $j_2(\gamma_x)$ and can be denoted by \eqref{continuousTimeEquationJet}, where it is now understood that $d W_t$ is the  vector Brownian motion increment.

The multi-dimensional analogue of Example \ref{powerExamples} suggests we write $\delta W^\alpha \delta W^\beta \approx g_E^{\alpha \beta} \delta t$. Here $g_E^{\alpha \beta}$ denotes the Euclidean metric on $\R^d$. Thus $g_E^{\alpha \beta}$ is equal to $1$ if $\alpha$ equals $\beta$ and $0$ otherwise. We can now compute a second
order Taylor expansion as follows.

\begin{equation}
\delta X_t \approx \frac{1}{2} \sum_\alpha \sum_\beta \frac{ \partial^2 \gamma_{X_t}}{\partial u^\alpha \partial u^\beta} 
g^{\alpha \beta}_E \delta t + \sum_{\alpha} \frac{\partial \gamma_{X_t}}{\partial u^\alpha}
\delta W^\alpha_t.
\label{notEinstein}
\end{equation}
Here $u^\alpha$ are the standard orthonormal coordinates for $\R^d$. 
We have chosen to write $g_E^{\alpha \beta}$ instead of using a Kronecker $\delta$ because one might want to choose non-orthonormal coordinates for $\R^d$ and so it is useful to notice that $g_E$ transforms as a symmetric $2$-form and not an endomorphism. Another advantage is that we can use the Einstein summation convention. For example \eqref{notEinstein} can be rewritten as
\begin{equation}
\begin{split}
\delta X^i_t 
&= \frac{1}{2}  \partial_\alpha \partial_\beta \gamma^i
g_E^{\alpha \beta} \delta t  + \partial_\alpha \gamma^i  \, \delta W^\alpha_t.
\label{sdeEinstein}
\end{split}
\end{equation}
We have shown informally how to define an SDE as the limit of a numerical scheme defined in terms of $2$-jets and we have shown how this scheme can
be written in local coordinates. A reader who is familiar with classical It\^o
calculus will immediately recognize \eqref{sdeEinstein} as the Euler scheme
for the It\^o stochastic differential equation obtained by replacing each
$\delta$ in \eqref{sdeEinstein} with a $\ed$. We now state the relationship
between these two approaches formally.

\begin{theorem}\label{th:convergence} {\bf Convergence of the 2-jet schemes to It\^{o} SDEs.} 
Let $\gamma_x: \R^d \to \R^n$ be a smoothly varying family of functions
whose first and second derivatives in $\R^d$ satisfy Lipschitz conditions (and hence linear growth
bounds). In other words we require that there exists a positive constant $K$ such that for all
$x, y \in \R^n$ and $\alpha, \beta \in \{1, 2, \ldots, d\}$ we have
\[
\begin{split}
 \left| \frac{\partial \gamma_x}{\partial u^\alpha}\Bigr|_{u=0} - \frac{\partial \gamma_y}{\partial u^\alpha}\Bigr|_{u=0} \right| &\leq K |x-y|, \\
 \left| \frac{\partial^2 \gamma_x}{\partial u^\alpha \partial u^\beta}\Bigr|_{u=0} - \frac{\partial \gamma_y}{\partial u^\alpha \partial u^\beta}\Bigr|_{u=0} \right| &\leq K |x-y|, \\
\bigg{(} \mbox{and hence} \ \  \left| \frac{\partial \gamma_x}{\partial u^\alpha}\Bigr|_{u=0}  \right|^2 &\leq K^2 \left(1 + |x|^2\right),  \\
 \left| \frac{\partial^2 \gamma_x}{\partial u^\alpha \partial u^\beta}\Bigr|_{u=0}  \right|^2 &\leq K^2 \left( 1 + |x|^2 \right) \ \ \ \bigg{)}. \\
\end{split}
\]
Note that we are using the letter $u$ to denote the standard orthonormal coordinates for $\R^d$.
Suppose in addition that we have a uniform bound on the third derivatives:
\[
\left| \frac{ \partial^3 \gamma_x }{\partial u^\alpha \partial u^\beta \partial u^\gamma }\Bigr|_{u=0}
\right| \leq K .
\]
Let $T$ be a fixed time and let ${\cal T}^N := \{0,\delta t, 2\,\delta t, \ldots, N\,\delta t =T\}$, 
be a set of discrete time points.
Let $X^N_t$ (we will omit the superscript below) denote the $2$-jet scheme defined by 
\begin{equation}\label{eq:schemediscretemultiW}
X_{t+\epsilon} = \gamma_{X_t}\left( \frac{\epsilon}{\delta t}(W_{t+\delta t}-W_t) \right), \qquad  t \in {\cal T}^{N-1},\ \epsilon \in [0,\delta t]\ , \qquad X_0=x_0.
\end{equation}
This converges in $L^2(\Pb)$ to the classical It\^o solution of the corresponding SDE, namely
\begin{equation}
\tx_t = \tx_0 + \int_0^t a(\tx_s) \, \ed s + \sum_{\alpha=1}^d \int_0^t b_\alpha(\tx_s) \, \ed W^{\alpha}_s,
\quad t \in [0,T]
\label{eq:equivalentClassical}
\end{equation}
where
\[
a(x) := \frac{1}{2} \sum_{\alpha=1}^d \frac{\partial^2 \gamma_x}{\partial u^\alpha \partial u^\alpha} \Bigr|_{u=0}
\]
and
\[
b_\alpha(x) := \frac{\partial \gamma_x}{\partial u^\alpha} \Bigr|_{u=0}.
\]
More precisely we have the estimate
\begin{equation}\label{sup-msconv}
\sup_{t \in [0,T]} E\left\{ |X^{N}_t - \tx_t|^2 \right\} \le C\ \delta t = C \frac{T}{N}
\end{equation}
for some constant $C$ independent of $N$. We denote the coordinate free equation obtained as limit of \eqref{eq:schemediscretemultiW} by
\[ X_t \jet j_2(\gamma_{X_t})(dW_t) . \]
\end{theorem}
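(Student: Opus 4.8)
The plan is to follow the classical strategy for proving strong convergence of It\^o--Taylor schemes, adapted to the continuous-time 2-jet scheme \eqref{eq:schemediscretemultiW}. The organising device is the componentwise Taylor expansion with Lagrange remainder of $\gamma_{X_t}$ about $u=0$, exactly as in \eqref{continuousTimeEquationW2withR} but now in $d$ driving dimensions. Writing $\delta W = W_{t+\delta t}-W_t$ and evaluating at a grid point $t \in {\cal T}^{N-1}$, I would decompose the one-step increment as
\[
X_{t+\delta t}-X_t = \sum_\alpha b_\alpha(X_t)\,\delta W^\alpha + \frac{1}{2}\sum_{\alpha,\beta}\partial_\alpha\partial_\beta\gamma_{X_t}(0)\,\delta W^\alpha\delta W^\beta + R_{X_t}(\delta W),
\]
with $R$ bounded via the uniform third-derivative bound $K$ and $|\delta W|^3$. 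The key algebraic point, already visible in \eqref{notEinstein}, is that replacing $\delta W^\alpha\delta W^\beta$ by $g_E^{\alpha\beta}\delta t$ turns the quadratic term into precisely the Euler drift $a(X_t)\,\delta t$. Hence the 2-jet increment equals the Euler--Maruyama increment plus the correction
\[
\tfrac{1}{2}\sum_{\alpha,\beta}\partial_\alpha\partial_\beta\gamma_{X_t}(0)\bigl(\delta W^\alpha\delta W^\beta - g_E^{\alpha\beta}\delta t\bigr) + R_{X_t}(\delta W),
\]
whose quadratic part is conditionally centred given the filtration at time $t$.

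With this decomposition I would compare the scheme directly to the true solution \eqref{eq:equivalentClassical}. Setting $e_t := X_t-\tx_t$, I would express $E|e_t|^2$ in integral/summation form and split the error into: (i) the drift and diffusion mismatch $a(X_s)-a(\tx_s)$ and $b_\alpha(X_s)-b_\alpha(\tx_s)$, each controlled by the Lipschitz hypotheses; (ii) the discretisation error from freezing coefficients on each $[k\delta t,(k+1)\delta t)$, handled by the linear-growth bounds and the $L^2$-continuity of It\^o integrals; (iii) the conditionally centred quadratic correction; and (iv) the third-order remainder. For (iii) I would exploit orthogonality of martingale differences: summing $N=T/\delta t$ conditionally mean-zero terms, each of conditional second moment $O(\delta t^2)$, yields a total of order $N\,\delta t^2 = O(\delta t)$, with Doob's maximal inequality promoting this to the supremum over $t$. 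For (iv) the uniform third-derivative bound and $E|\delta W|^6 = O(\delta t^3)$ give per-step cost $O(\delta t^3)$, again summing to $O(\delta t)$. Collecting everything produces a discrete Gronwall inequality of the form $E\sup_{s\le t}|e_s|^2 \le C\,\delta t + C'\int_0^t E\sup_{r\le s}|e_r|^2\,\ed s$, whose resolution gives \eqref{sup-msconv}.

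Two preliminaries must be dispatched first. I would establish a uniform a priori moment bound $\sup_N\sup_{t\le T}E|X^N_t|^2 < \infty$ from the linear-growth consequences of the hypotheses, via the same one-step expansion and a discrete Gronwall argument; this is what makes the insertion and removal of frozen coefficients in (i)--(iv) legitimate. Second, because \eqref{eq:schemediscretemultiW} defines a genuinely continuous-time process through the scaling $\tfrac{\epsilon}{\delta t}(W_{t+\delta t}-W_t)$ rather than the true increment $W_{t+\epsilon}-W_t$, I would separately bound the within-step deviation $|X_{t+\epsilon}-X_t|$ for $\epsilon\in[0,\delta t]$; smoothness of $\gamma$ together with $\bigl|\tfrac{\epsilon}{\delta t}\delta W\bigr|\le|\delta W|$ makes this an $O(\delta t)$ contribution to the mean square, so the nonstandard interpolation does not spoil the rate.

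I expect the main obstacle to be the bookkeeping in step (iii): the quadratic correction must be kept evaluated at $X_t$ (not $\tx_t$) under the conditional expectation so that its martingale property survives, while the coefficient difference $\partial_\alpha\partial_\beta\gamma_{X_t}-\partial_\alpha\partial_\beta\gamma_{\tx_t}$ is simultaneously absorbed into the Lipschitz/Gronwall terms. Arranging the estimates so that \emph{every} error source collapses to a clean $O(\delta t)$—rather than a lossy $O(\sqrt{\delta t})$—is the delicate part, and it is exactly where the hypotheses on the first, second, and third derivatives of $\gamma$ are each consumed.
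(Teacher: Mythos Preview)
Your proposal is correct and uses the same core ingredients as the paper (Taylor expansion with Lagrange remainder, martingale orthogonality for the quadratic correction, Lipschitz bounds feeding a Gronwall inequality), but the paper organises the argument differently. Rather than comparing $X^N$ directly to the true solution $\tx$, the paper interposes the Euler--Maruyama scheme $\bx$ and invokes the known estimate $\max_{t\in{\cal T}^N} E|\bx_t-\tx_t|^2 \le C\,\delta t$ from Kloeden--Platen as a black box, reducing the task to bounding $E|X_t-\bx_t|^2$. This eliminates your step (ii) entirely---the discretisation error from freezing coefficients is absorbed into the cited Euler result---and lets the paper use the off-the-shelf moment bound $E|\bx_k|^2\le C$ instead of having to establish one for $X^N$ from scratch. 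Your direct route is more self-contained; the paper's intermediary trick is shorter because it recycles existing machinery. The handling of your steps (iii) and (iv) is essentially identical in both approaches: the paper splits the quadratic correction $S^{\alpha,\beta}$ into a part evaluated at $\bx$ (conditionally centred, giving $O(\delta t)$ via orthogonality) and a coefficient-difference part $\tilde a_{\alpha,\beta}(X)-\tilde a_{\alpha,\beta}(\bx)$ (Lipschitz, absorbed into Gronwall), which is exactly the bookkeeping you flag as the main obstacle.
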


This theorem proves one of the main results of this paper: a It\^o SDE can be represented in a coordinate free manner simply as a 2-jet driven by Brownian motion.  The proof is given in Appendix \ref{proofsSection}.

\section{Jets and second order operators}
\label{differentialGeometrySection}

\begin{definition}{\bf (Coordinate free It\^o SDEs driven by vector Brownian motion).}
An It\^o SDE or It\^o diffusion on a manifold $M$ is a section of the bundle of $2$-jets of maps $\R^d \rightarrow M$ together with $d$ Brownian motions $W^i_t$, $i=1,\ldots,d$.
\end{definition}

Our discrete time formulation (equation \eqref{vectorJetEquation}) 
of an It\^o SDE on a manifold is already coordinate free in that it makes no mention of the vector space structure of $\R^n$. Given such a $\gamma$ we can
write down a corresponding It\^o SDE using the notation of equation \eqref{continuousTimeEquationJet}.
This may be interpreted either as indicating the limit of the discrete time process or as a short-hand
for a classical It\^o SDE. The reformulation of It\^o's lemma in the language of jets shows that this second interpretation will be independent of the choice of coordinates. The only issue one needs to consider are the bounds needed to ensure existence of solutions. The details of transferring the theory of existence and uniqueness of solutions of SDEs to manifolds are considered in, for example, \cite{elworthy}, \cite{elworthybook}, \cite{ito2}, \cite{emery} and \cite{hsu}.
The point we wish to emphasize is that the coordinate free formulation of an SDE given in equation
\eqref{continuousTimeEquationJet} can be interpreted just as easily on a manifold as on $\R^n$.

We will now see how this allows us to study the differential geometry of SDEs
in a coordinate free manner. In particular we will show how to give coordinate free definitions of key concepts such as the diffusion operators and Brownian motion.

Note that an alternative approach to stochastic differential geometry is to place these operators centre stage. This is the essence of the approach of second order tangent vectors and Schwarz morphisms. Thus this section can also
be seen as establishing the relationship between these approaches.
\ifarxiv{See Appendix \ref{appendix:equivalentFormulations} for further details.}{See Appendix B of the preprint version \cite{arxivVersion} for further details.}

Suppose that $f$ is a function mapping $M$ to $\R$. We can define a differential operator acting
on $functions$ in terms of a $2$-jet associated with $\gamma_x$ as follows.
\begin{definition}{\bf (Backward diffusion operator via 2-jets).} The Backward diffusion operator for the It\^o SDE corresponding to $\gamma_x$ is defined on suitable  functions $f$ as
\begin{equation}
{\cal L}_{\gamma_x} f := \frac{1}{2}\Delta_E (f \circ \gamma_x){\Bigr|_0} = \frac{1}{2} \partial_\alpha \partial_\beta (f \circ \gamma_x){\Bigr|_0}  g_E^{\alpha \beta}.
\label{backwardDiffusion}
\end{equation}
Here $\Delta_E$ is the Laplacian defined on $\R^d$.
${\cal L}_{\gamma_x}$ acts on functions defined on $M$.
\end{definition}
In contexts where the SDE is understood we will simply write ${\cal L}$.

Note that this definition is simply the drift term of the It\^o SDE for $f(X)$ computed using It\^o's lemma. To first order, 
the drift measures how the expectation of an SDE solution process
changes over time. Thus, with $\delta$ denoting the forward $t$ increment as usual,
\begin{equation}
\left(\delta \E[f(X_t)|X_s=x]\right)|_{s=t}= ({\cal L}_{\gamma_x}(f))\delta t + O(\delta t^2).
\label{fokkerPlanckInfinitessimal}
\end{equation}

Before proceeding to define the forward diffusion operator, let us briefly recall
the theory of the bundle of densities on a manifold.

Recall that given a vector space $V$ and a group homomorphism
\[ \tau:GL(n,\R) \to \hbox{Aut}(V) \]
we can define an associated bundle $\bf{V}$ over $M$. The fibres of the bundle
over a point $p$ are given by equivalence classes of charts $\phi:M\to\R^n$ and vectors
$v \in V$ under the relation
\[
(\phi_1,v_1) \sim ( \phi_2, v_2) \quad \Leftrightarrow \quad \tau\left((\phi_1)_* \circ (\phi_2)^{-1}_*\right)_p(v_2)=v_1.
\]
Each chart $\phi:M\to\R^n$ defines local coordinates on $\bf{V}$. We use this to define
the smooth structure on $\bf{V}$. This generalizes straightforwardly \cite{kobayashiAndNomizu}
to allow one to associate a vector bundle
with any principal $G$-bundle and a representation of the Lie group $G$. In our special case, the
principal $G$-bundle we are using is the frame bundle of the manifold.

Now consider the representation
\[ \tau(g) = |\det(g)| \in \hbox{Aut}(\R). \]
This defines a bundle over a manifold $M$ called the bundle of densities. This bundle is denoted $\Vol$.
The usual transformation formula for probability densities under changes of coordinates tells us
that a probability density over $M$ is a section of $\Vol$.

Integration defines a pairing between functions and densities on $M$ by
\[
\begin{split}
\int:& \ \ \ \Gamma(\R) \times \Gamma(\Vol) \to \R \\
\hbox{by }& \int( f, \rho ) := \int f \rho.
\end{split}
\]
On non-compact manifolds one must either insist that one of $f$ or $\rho$ is compactly
supported or consider decay rates of $f$ and $\rho$ to ensure this is well defined.

Note that from a probabilistic point of view, this pairing is interpreted as taking expectations.

Using integration by parts, we can define ${\cal L^*}$ to be the formal adjoint of ${\cal L}$
with respect to this pairing. This is called the {\em forward diffusion operator}.

Even if one does not know the initial state $X_0$ but knows its probability density, $\rho$, one
may integrate equation \eqref{fokkerPlanckInfinitessimal} to obtain
\[
\frac{ \partial}{\partial t} \int_M f(\rho) = \int_M ({\cal L}f)(\rho).
\]
Since this holds for all smooth compactly supported functions $f$ we can deduce
\[
\frac{ \partial \rho}{\partial t} = {\cal L^*}(\rho).
\]
We conclude that the Fokker--Planck equation follows from It\^o's lemma for functions.

Notice that both ${\cal L}$ and ${\cal L}^*$ are linear second order operators. The key difference
is that they have different domains. ${\cal L}$ acts on functions; ${\cal L}^*$ acts on densities.
This gives a geometric explanation as to why ${\cal L}$ appears in the Feynman--Kac equation
which tells us about the evolution of expectations of functions whereas ${\cal L}^*$ appears in the Fokker--Planck equation which tells us about the evolution of probability densities.

\subsection{Weak and strong equivalence}

We see that both the It\^o SDE \eqref{sdeEinstein} and the backward diffusion operator 
\eqref{backwardDiffusion} use only
part of the information contained in the $2$-jet. Specifically only the diagonal terms of $\partial_\alpha \partial_\beta \gamma^i$ (those with $\alpha=\beta$) influence the SDE and even for these terms it is only their average value that is important. The same consideration applies to
the backward diffusion operator. This motivates the following

\begin{definition}
We say that two $2$-jets $\gamma^1_x$ and $\gamma^2_x$
\[ \gamma^i_x: \  \R^d \to M \]
are {\em weakly equivalent} if
\[ {\cal L}_{\gamma^1_x} = {\cal L}_{\gamma^2_x}. \]
We say that $\gamma^1$ and $\gamma^2$ are {\em strongly equivalent} if in addition
\[ j_1(\gamma^1) = j_1(\gamma^2). \]
\end{definition}

We see that the SDEs defined by the two sets of $2$-jets are equivalent if the $2$-jets are strongly equivalent. This means that given the same realization of the driving Brownian motions $W^\alpha_t$ the
solutions of the SDEs will be almost surely the same (under reasonable assumptions to ensure pathwise uniqueness of solutions to the SDEs). 

When the $2$-jets are weakly equivalent, the transition probability distributions resulting from the dynamics of the related SDEs are the same even though the dynamics may be different for any specific realisation of the Brownian motions. For this reason one can define a diffusion process on a manifold
as a smooth selection of a second order linear operator ${\cal L}$ at each point that determines the transition of densities. In this context ${\cal L}$ is known as the {\em infinitesimal generator of the diffusion}. A diffusion can be realised locally as an SDE, but not necessarily globally. 

Recall that the top order term of a quasi linear differential operator is called its symbol. In the case of a second order quasi linear differential operator $D$ which maps $\R$-valued functions
to $\R$-valued functions, the symbol defines a section of $S^2 T$, the bundle of symmetric tensor products of tangent vectors, which we will call $g_D$.

In local coordinates, if the top order term of $D$ is
\[
D f = a^{ij} \partial_i \partial_j f + \hbox{lower order}
\]
then $g_D$ is given by
\[
g_D(X_i, X_j) = a^{ij} X_i X_j.
\]
We are using the letter $g$ to denote the symbol for a second order operator because, in the
event that $g$ is positive definite and $d=\dim M$, $g$ defines a Riemannian metric on $M$. In these circumstances we will say that the SDE/diffusion is {\em non-singular}. Thus we can associate a canonical Riemannian metric $g_{\cal L}$ to any non-singular SDE/diffusion.

\begin{definition}
A  diffusion on a manifold $M$ is called a Riemannian Brownian motion if 
\[ {\cal L}( f )=\frac{1}{2} \Delta_{g_{\cal L}}(f). \]
\end{definition}
Note that given a Riemannian metric $h$ on $M$ there is a unique Riemannian Brownian motion (up to diffusion equivalence) with $g_{\cal L}=h$. This is easily checked with a coordinate calculation.

This completes our definitions of the key concepts in stochastic differential geometry
and indicates some of the important connections between stochastic differential equations,
Riemannian manifolds, second order linear elliptic operators and harmonic maps.

\subsection{Brownian Motion}

Let us examine the important special case of Brownian motion from a variety of perspectives: local coordinates; the exponential map; and mean curvature.

\subsubsection{Local coordinates}

We will now show how the definitions above allow us to compute Brownian motion on a topologically non-trivial $d$-dimensional Riemannian manifold $(M,g)$ for which we have an atlas. We used this computation to generate Figure \ref{fig:pretzelBrownianMotion} where we simulate Brownian motion on a genus $2$ surface.

Let $(x^1, x^2, \ldots, x^d)$ be a chart for $M$ and let $(u^1,u^2, \ldots, u^d)$ be the standard coordinates on $\R^d$ (so $g^{\alpha,\beta}_E$ is the identity matrix). Let $\gamma_x:\R^d \to M$ with $\gamma_x(0)=x$ and let $f:M\to \R$. Let us write $\gamma^i$ for the components of $\gamma$, so $\gamma^i=x^i \circ \gamma$. We compute
\[
\begin{split}
{\cal L}_{\gamma_x} f
&= \frac{1}{2} g^{\alpha \beta}_E \frac{\partial^2}{\partial u^\alpha \partial u^\beta}(f \circ \gamma_x)  \Big|_{u=0} \\
&= \left( \frac{1}{2} g^{\alpha \beta}_E \frac{\partial^2 f}{\partial x^i \partial x^j}(\gamma_x(u)) \frac{\partial \gamma^i_x}{\partial u^\alpha} \frac{\partial \gamma^j_x}{\partial u^\beta}  \right) \Bigg|_{u=0}
+ \left( \frac{1}{2} g^{\alpha \beta}_E \frac{\partial f}{\partial x^i}(\gamma_x(u)) \frac{\partial^2 \gamma^i_x}{\partial u^\alpha \partial u^\beta}  \right) \Bigg|_{u=0}.
\end{split}
\]
Let us assume that $\gamma_x$ is a quadratic function of $u$ so that in local coordinates
\[
\gamma_x^i = x + b^i_\alpha u^\alpha + a^{i}_{\alpha \beta} u^\alpha u^\beta
\]
where $b^i$ and $a^{i}_{\alpha \beta}$ are real valued functions on the manifold and $i,\alpha,\beta \in \{1, \ldots d \}$. So
\[
{\cal L}_{\gamma_x} f
= \frac{1}{2} g^{\alpha \beta}_E b^i_\alpha b^j_\beta \frac{\partial^2 f}{\partial x^i \partial x^j} +  g^{\alpha \beta}_E a^i_{\alpha \beta} \frac{\partial f}{\partial x^i}.
\]
Following standard conventions, on a Riemannian manifold we write $g_{ij}$ for the metric tensor in local coordinates, we write $|g|$ as an abbreviation for $\det g_{ij}$ and we write $g^{ij}$ for the inverse matrix of $g_{ij}$. The Laplacian on $(M,g)$ is then given in local coordinates by
\[
\begin{split}
\Delta f &= \frac{1}{\sqrt{|g|}} \frac{\partial}{\partial x^i}
\left( \sqrt{|g|} g^{ij} \frac{\partial f}{\partial x^j} \right) \\
&= \frac{1}{\sqrt{|g|}} \frac{\partial}{\partial x^i}
\left( \sqrt{|g|} g^{ij} \right) \frac{\partial f}{\partial x^j}
+ g^{ij} \frac{\partial^2 f}{\partial x^i \partial x^j},
\end{split}
\]
(see, for example, \cite{jost}).
We have said that $\gamma$ defines Brownian motion on $(M,g)$ if ${\cal L}_{\gamma_x}$ is equal to half the Laplacian operator. So $\gamma$ will define Brownian motion if at each point $x$,
\begin{equation}
g^{\alpha \beta}_E b^i_\alpha b^j_\beta = g^{ij}
\label{eqn:pseudoSquareRoot}
\end{equation}
\begin{equation}
g^{\alpha \beta}_E a^i_{\alpha \beta} = \frac{1}{2 \sqrt{|g|}} \frac{\partial}{\partial x^j}
\left( \sqrt{|g|} g^{ij} \right).
\label{eqn:brownianDrift}
\end{equation}
As we would expect, these equations are under-determined. We can find a solution
to \eqref{eqn:pseudoSquareRoot} by taking the matrix $b^{i}_\alpha$ to 
be the Cholesky decomposition of the matrix $g^{ij}$. We can also find a solution of
equation \eqref{eqn:brownianDrift} by taking
\[
a^{i}_{\alpha \beta} = \begin{cases}
\frac{1}{2 d \sqrt{|g|}} \frac{\partial }{\partial x^j}( \sqrt{|g|} g^{ij})
& \text{if } \alpha=\beta \\
0 & \text{otherwise}.	
\end{cases}
\]
In summary we have found a canonical choice of $\gamma$ that locally defines Brownian motion in a chart. Given an atlas we can then choose $\gamma_x$ at each point by choosing a $\gamma_x$ from one of the charts around $x$. Although $\gamma_x$ itself will not vary smoothly between charts, the weak equivalence class of $\gamma_x$ will vary smoothly.

In Figure \ref{fig:pretzelBrownianMotion} we show the result of simulating Brownian motion on the genus $2$ surface in $\R^3$ given in coordinates $(y_1, y_2, y_3)$ by
\[
((y_1 - 1) y_1^2 (y_1 + 1) + y_2^2 )^2 +  y_3^2 = \frac{1}{30}.
\]
We found $14$ charts for this surface by projecting along each of the axes $y_i$ (Mathematica's Solve function made this easy to do). At each point $x$ in this manifold, we chose a specific one of these charts containing $x$ by projecting along the axis whose inner product with the normal at $x$ had the largest absolute value (we preferred the axis $y_i$ with the lowest index $i$ in the event of a tie). In this way we were able to define an explicit quadratic map $\gamma_x$ at each point. Since the image of $\gamma_x(u)$ will leave the chart for large $u$, we defined a new function by $\tilde{\gamma}(x) = \gamma_x(u) \rho(| \gamma_x(u)| )$ where $\rho:[0,\infty]\to[0,\epsilon)$ is a smooth increasing function equal to the identity near $0$ and where the value $\epsilon$ was so as to ensure that $\tilde{\gamma}$ would never leave the selected chart around $x$. By construction $j_2(\tilde{\gamma})=j_2(\gamma)$, so the maps $\tilde{\gamma}$ can be used to approximate Brownian motion in discrete time using equation \eqref{vectorJetEquation}.

\subsubsection{Exponential map}

Another choice of canonical map $\gamma_x:\R^d \to M$ that generates Brownian motion on a Riemannian manifold is the exponential map \cite{jost}.
In this case the convergence of the discrete time scheme \eqref{vectorJetEquation} is well-known. In those situations where the exponential map can be calculated explicitly this gives the most obvious choice of $\gamma$. Our local coordinate calculation shows how Brownian motion can be simulated when the exponential map is not explicitly known.

We note that simulating Brownian motion on a Riemannian manifold provides a useful tool for sampling from probability distributions on that manifold. For example, \cite{byrneGirolami} uses the exponential map to simulate Brownian motion and provides explicit coordinate calculations in the case of Stiefel manifolds and show with examples how this can be applied to statistical problems such as dimension-reduction. The papers \cite{girolamiCalderhead, diaconisEtAl, brubakerEtAl} discuss related approaches to sampling from manifolds.

One does not need a metric to define the exponential map, it can be defined using a connection alone. The scheme \eqref{vectorJetEquation} resulting from the exponential map of a connection was studied in \cite{gangolli} and a version of Theorem \ref{th:convergence} was proved for this case (see also the similar paper \cite{pinsky}). This approach to defining stochastic differential equations on manifolds is known as the McKean--Gangolli injection.

\subsubsection{Mean Curvature}

Brownian motion on a hypersurface $H$ in $\R^{d+1}$ can be defined using a $d$-dimensional stochastic process in $\R^{d+1}$ that has the property that trajectories which start on $H$ stay on $H$. Let us see how this can be understood geometrically with jets.	
	
Suppose that we have a map $\gamma_x:\R^d \to \R^{d+1}$ at each point $x$ of $H$. Choose orthonormal coordinates $u^\alpha$ for $\R^d$. For each $\alpha=1 \ldots d$, consider the curves $\gamma^\alpha_x:\R \to \R^{d+1}$ by $\gamma^\alpha_x = \gamma \circ i^\alpha$ where $i^\alpha:\R \to \R^d$ is the inclusion given by coordinate $\alpha$. If $\gamma_x$ were the exponential map, each $\gamma_x^\alpha$ would be parametrised by arc-length and would have curvature orthogonal to $H$. Moreover the mean of these curvatures as $\alpha$ varies from $1$ to $d$ would be the mean curvature vector of $H$. In coordinates, if the $\gamma^\alpha_x$ are parameterized by arc-length the mean of the curvatures is given by
\[
\frac{1}{d} g^{\alpha \beta}_E \partial_{\alpha} \partial_{\beta} \gamma_x^i.
\]	
We note that this quantity is not dependent on the choice of coordinates $u^\alpha$ and is equal to $\frac{2}{d}$ times the drift term in \eqref{sdeEinstein}.

From our discussion of weak equivalence of $2$-jets, we deduce that $\gamma_x$ defines Brownian motion on a hyper-surface $H$
if and only if both: to first order $\gamma_x$ is an isometry onto the tangent space of $H$; the mean of the curvatures of $\gamma$ is
equal to the mean-curvature vector of $H$. This gives a geometric interpretation of the drift of Brownian motion. \ifarxiv{Note that the word mean in mean-curvature has nothing to do with the stochastic process. It simply refers to the average of the curvatures across all principle directions.}{}

Note that the second condition is somewhat stronger than requiring that the image of $\gamma$ has the same mean-curvature vector as $H$. This is because the mean-curvature vector is always orthogonal to $H$ but the mean of the curvatures depends upon the parameterization and may have a component tangent to $H$.	    

In general an SDE on a manifold $M \subseteq \R^n$ can be understood as SDEs on $\R^n$ whose trajectories which start on $M$ remain on $M$. As in the above example, at each point in the manifold $M$ the $2$-jet in $\R^n$ will be completely determined by the SDE on $M$. One can interpret the drift term of \eqref{sdeEinstein} for the process in $\R^n$ as having a tangent component determined by the intrinsic SDE on $M$ and an orthonormal component determined by the requirement that the trajectories are confined to $M$.

\section{Drawing SDEs driven by vector Brownian motion}
\label{drawingVectorSDEs}

We can draw an It\^o SDE driven by $d$-dimensional Brownian motion
by drawing a function
\[
\gamma_x: \R^d \to M
\]
at every point on the manifold $M$. Of course, in practice one only draws the function at
a finite set of sample points in $M$.

However,
$\gamma_x$ is not uniquely determined by the SDE. By
drawing $\gamma_x$, we are drawing a representative of the equivalence class of two jets that define the same SDE. To illustrate this, in the top line of 
Figure \ref{fig:equivalentTwoJets} we have plotted three functions $\gamma^*_0:\R^2 \to M$ whose $2$-jets all define the same SDE. They are defined as follows.
\begin{equation}
\begin{split}
\gamma^A_0(x,y)&= x(1,0) + 2 y(0,1) + 2 x^2(1,0), \\
\gamma^B_0(x,y)&= x(1,0) + 2 y(0,1) + 2 y^2(1,0), \\
\gamma^C_0(x,y)&= x(1,0) + 2 y(0,1) + (x^2 + y^2)(1,0).
\end{split}
\label{equivalentJets}
\end{equation}
We have plotted the image of a circle of radius $\epsilon=0.3$ in $\R^2$ under each of the maps
$\gamma^*_0$. The grid lines shown are the image of polar grid lines rather than Cartesian grid lines.
Polar coordinates are a more natural choice for plotting SDEs since the rotational symmetry of $\R^d$
corresponds to the notion of weak equivalence of SDEs.

\begin{figure}[htb]
\begin{center}
\begin{tabular}{ccc}
\includegraphics[width=0.20\linewidth]{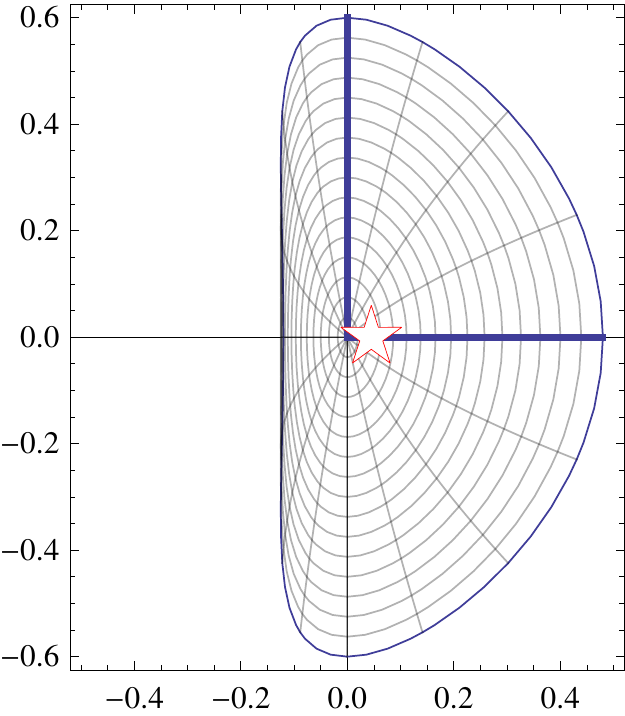} &
\includegraphics[width=0.20\linewidth]{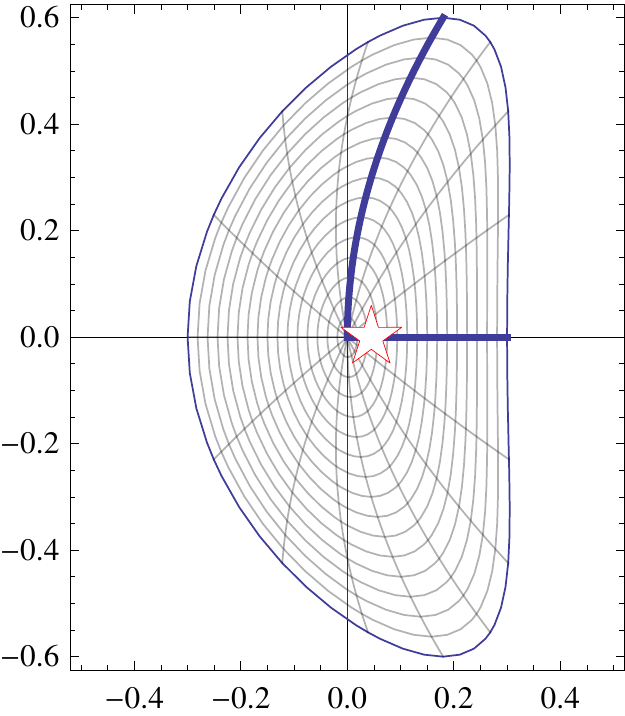} &
\includegraphics[width=0.20\linewidth]{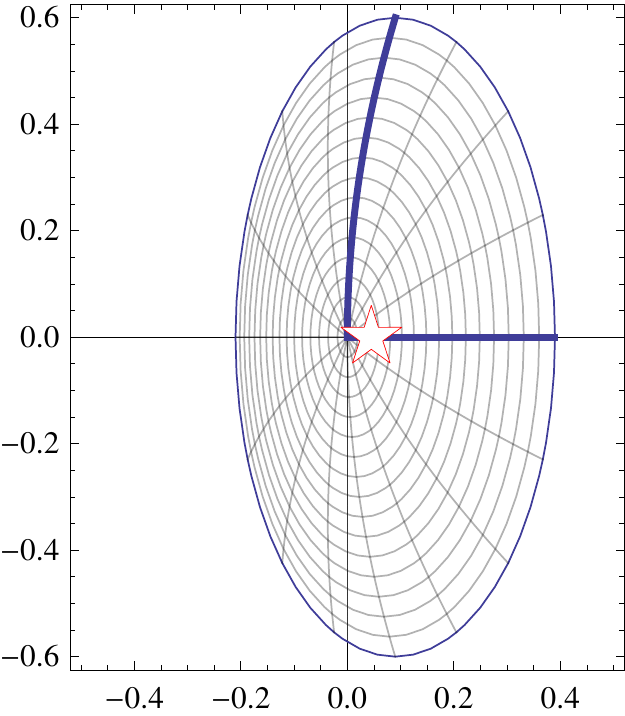} \\
\includegraphics[width=0.20\linewidth]{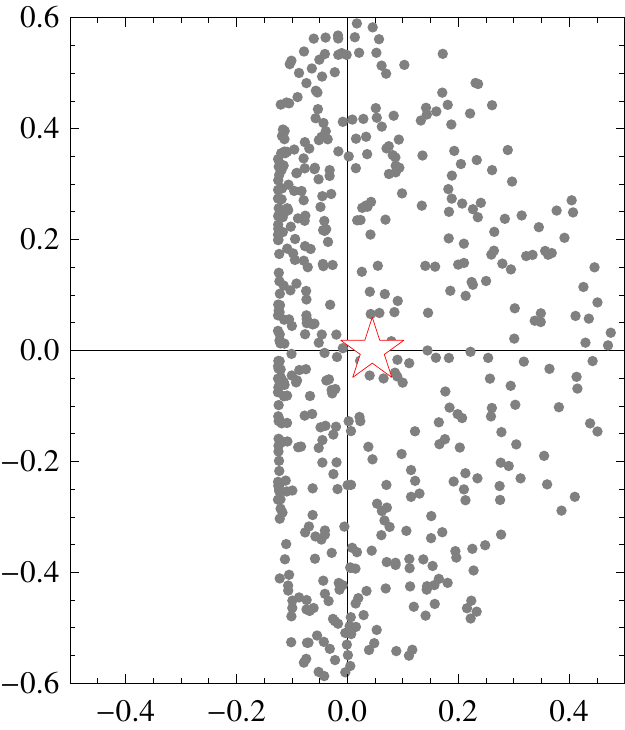} &
\includegraphics[width=0.20\linewidth]{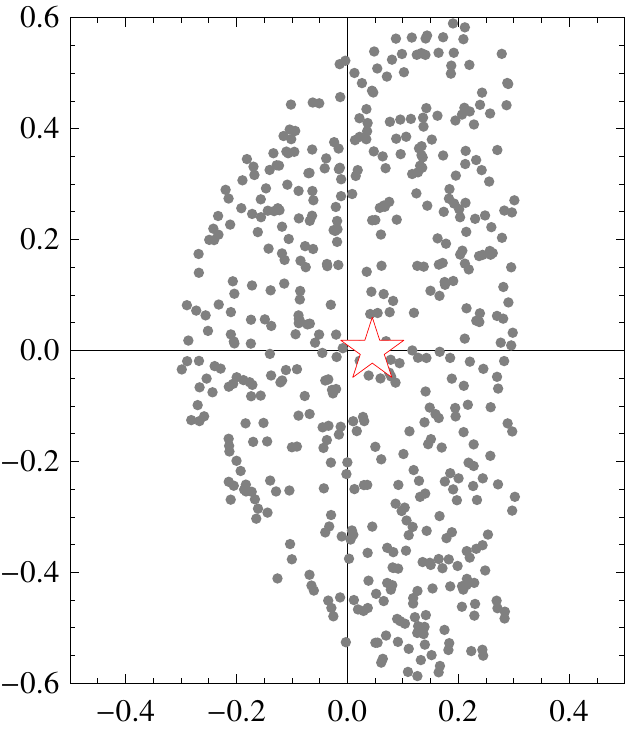} &
\includegraphics[width=0.20\linewidth]{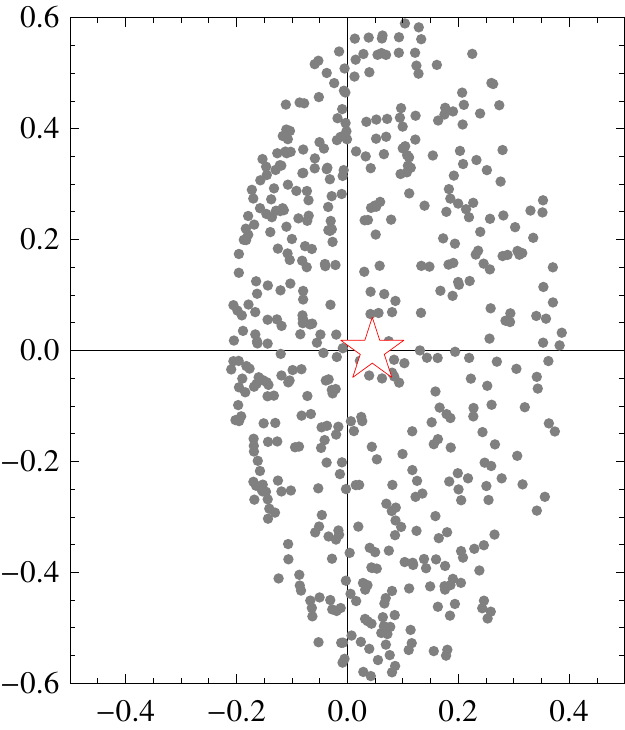} \\
$\gamma^A_0$ & $\gamma^B_0$ & $\gamma^C_0$
\end{tabular}
\end{center}
\caption{Plots of the equivalent $2$-jets $\gamma^*_0$ defined in equation \ref{equivalentJets}}
\label{fig:equivalentTwoJets}
\end{figure}

The diffusion term of an SDE corresponds to the first order term of the jets. 
This is a linear mapping of the plane and hence maps the unit circle to an ellipse. This gives rise
to the broadly elliptical shape of the plots.
The drift term of the SDE corresponds to the mean of the image. This
is marked with a star. As one can see this drift is the same for all the plots $\gamma^*_0$.

If we are interested in strong equivalence of the SDEs,
the image of each axis is important as it tells us the strength of each component of the Brownian
motion. It is only the direction of the axes that is important and not the curvature. We have used thicker
lines to indicate the image of the $x$ and $y$ axes under each map in \ref{fig:equivalentTwoJets}.

An alternative plot of the $2$-jets $\gamma^*_0$ is shown in the bottom line of Figure \ref{fig:equivalentTwoJets}. Instead of plotting the jet by showing the image of polar grid lines,
we have shown the image of a set of $1000$ uniformly distributed points inside a ball of
radius $\epsilon=0.3$. We have again plotted the mean point with a star. These plots
eliminate the extraneous details of the maps $\gamma_X$ allowing one to see more clearly
the key features that determine the weak equivalence class of the jets. These plots provide a clear visual
link between the geometric and the probabilistic properties of the SDE.

If one wished to illustrate strong equivalence, the plots on the lower line in Figure \ref{fig:equivalentTwoJets}
could be augmented with vectors indicating the mappings of each axis up to first order. Such a diagram
would illustrate the key features of the strong equivalence class stripped of visual distractions such
as the curvature of the images of the axes.

One can, therefore, draw a two dimensional SDE by drawing an infinitesimal diagram
of the sort shown in Figure \ref{fig:equivalentTwoJets} at a number of points. These drawings
would satisfy the desirable commutativity property illustrated in diagram \eqref{fig:diagram}. However, the resulting drawings
would be very busy. 

We can strip out some of the excessive detail from our diagram by deciding to choose a specific representative of the $2$-jet at each point. Given an SDE in local coordinates,
\[ \ed X_t = a(X_t) \ed t + b_i(X_t) \ed W^i_t \]
we choose the specific two jet given by
\[ \gamma_x(s) = x + \frac{1}{d} a \, g^E_{ij} s^i s^j + b_i s^i. \]
The image of an $\epsilon$ ball under $\gamma_x$ will be an ellipsoid. Moreover, if we
know that $\gamma_x$ is of this form, we can recover the coefficients $a$ and $b_i$ up
to weak equivalence just from knowledge of the image of the $\epsilon$ ball. As an example,
notice that the curve $\gamma^C_0$ from \eqref{equivalentJets} is of this form.
This allows us to simplify our diagrams by representing each $2$-jet by
drawing the image of an $\epsilon$ ball at each point under this specific
representative of the $2$-jet.

For example in Figure \ref{fig:heston} we show a plot of the Heston stochastic volatility model
with drift (see \cite{heston}):
\begin{equation}
\begin{split}
\ed S_t &= \mu S_t \ed t + \sqrt{\nu_t} S_t \ed W^1_t, \\
\ed \nu_t &= \kappa( \theta - \nu_t) \ed t+ \xi \sqrt{\nu_t} \, (\rho \ed W^1_t + \sqrt{1-\rho^2} \ed W^2_t ).
\end{split}
\label{hestonEquation}
\end{equation}
Note that as well as plotting the ellipses, the figure indicates the exact point that each ellipse is associated with. The extent to which the centre of the ellipse differs from the associated point is a measure of the drift.

Figure \ref{fig:heston} is coordinate dependent since its definition depends upon choosing a specific representative of the $2$-jet at each point. However,  it can be thought of as a visual shorthand for a coordinate independent diagram where repeated copies of the more detailed pictures of Figure \ref{fig:equivalentTwoJets} are used.

\begin{figure}[ht]
\centering
\begin{minipage}[t]{0.45\textwidth}
\includegraphics[width=1\linewidth]{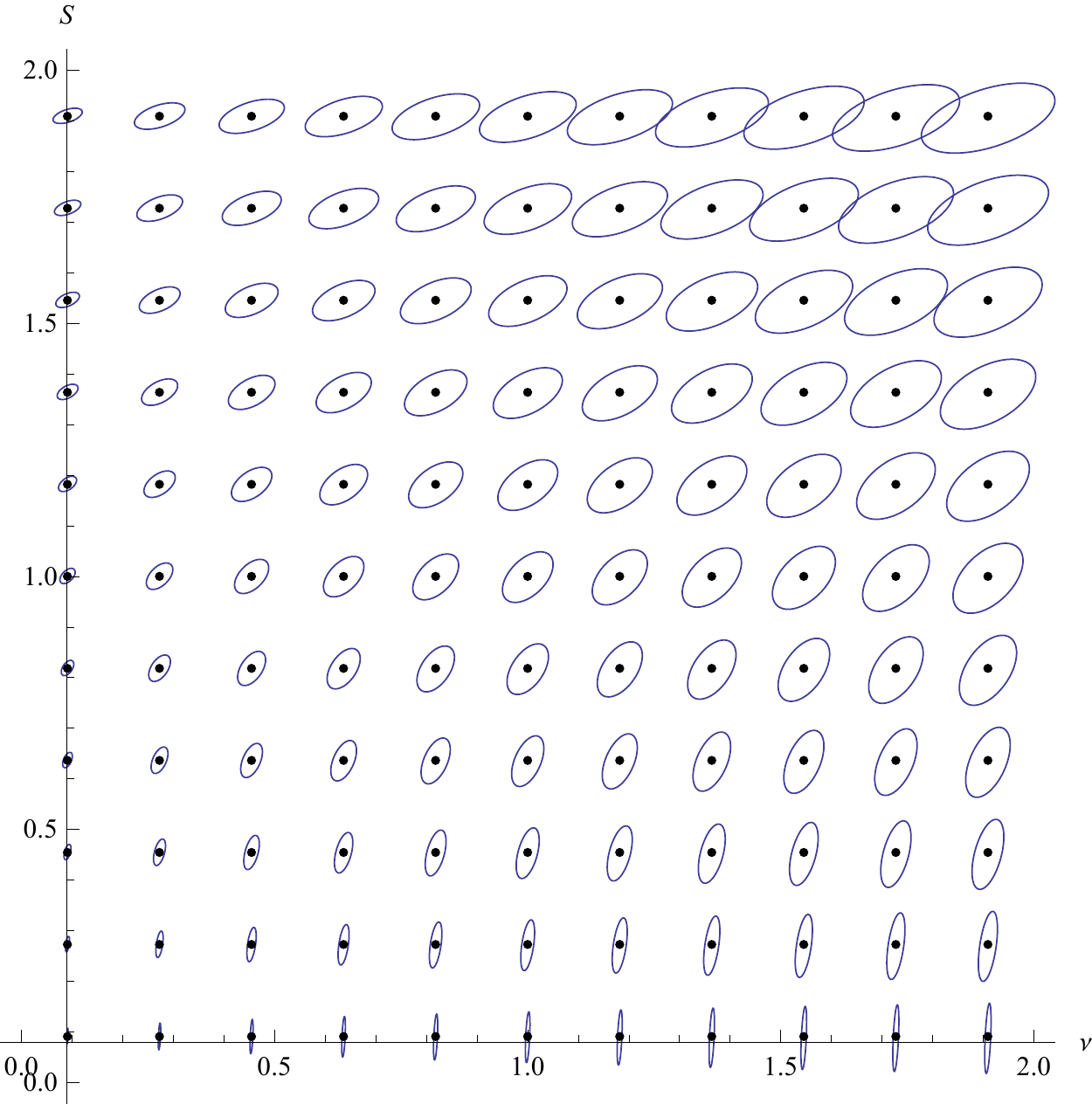}
\caption{A plot of the Heston model \eqref{hestonEquation}. Parameter values 
$\xi = 1$,
$\theta = 0.4$,
$\kappa = 1$,
$\mu = 0.1$,
$\rho = 0.5$. We have
plotted the image of the $\epsilon$-balls for $\epsilon=0.05$}
\label{fig:heston}
\end{minipage} %
\hfill %
\begin{minipage}[t]{0.45\textwidth}
\includegraphics[width=1\linewidth]{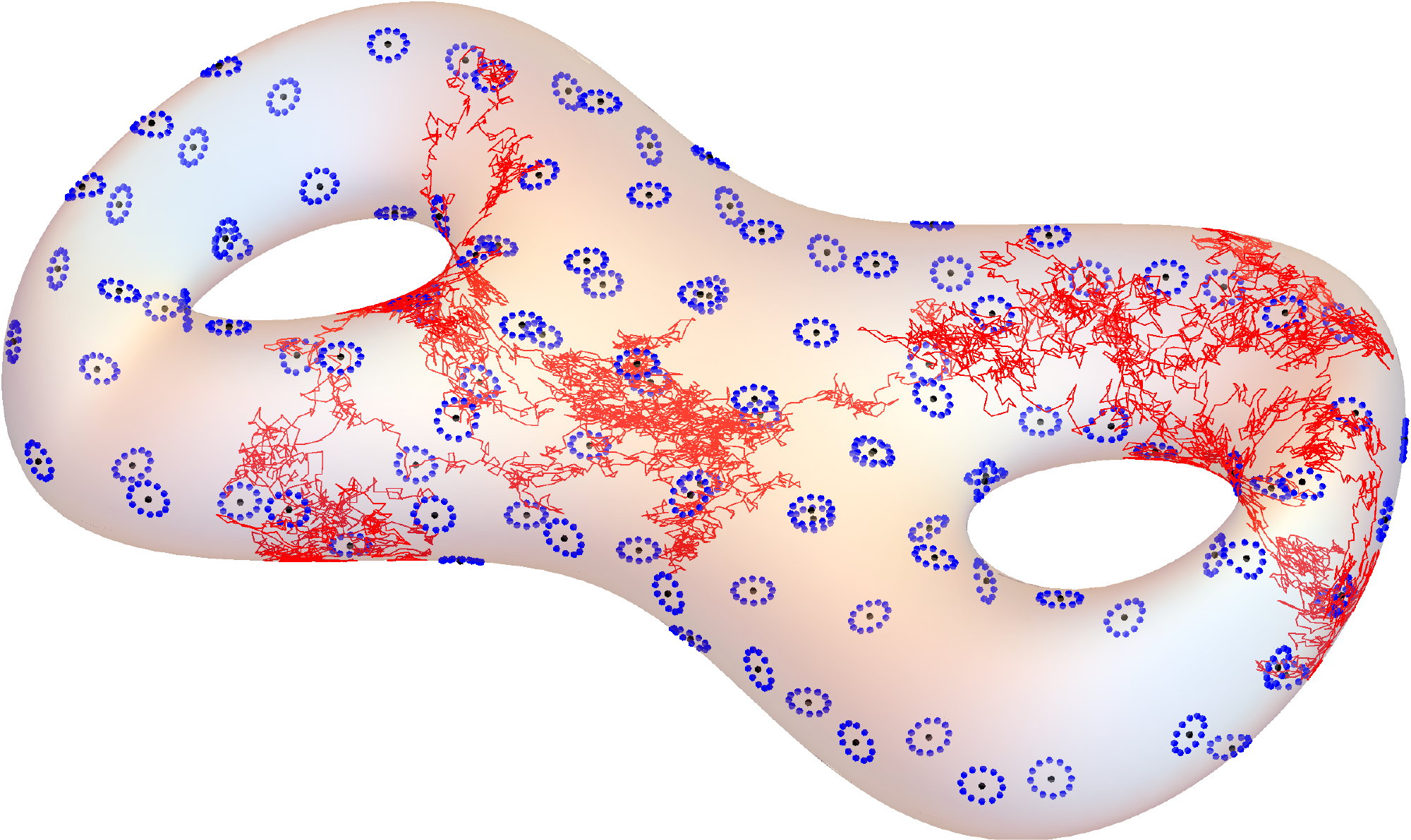}
\caption{Plot of Brownian motion over a short, finite time interval on a genus $2$ surface (red). The blue dots can be interpreted as a plot of the SDE defining Brownian motion.}
\label{fig:pretzelBrownianMotion}
\end{minipage} %
\end{figure}

Similarly Figure \ref{fig:pretzelBrownianMotion} depicts the SDE defining Brownian motion on a genus $2$ surface by showing the image of the center and the $12$ points on the edge of a clock-face under $\gamma$. These are shown in blue. As can be seen all the clock faces appear to be circles of the same size, this is a characteristic property one would expect of Brownian motion. It shows that the metric induced by the SDE is indeed equal to the metric induced by the embedding. Similarly the image of the centre of the clock face appears to be in the centre of each of the circles. This shows that the forward and backward operators are equal.

\section{Jets, vector fields and Stratonovich calculus}\label{sec:strato}
We wish to show how jets can be described using vector fields. This will
allow us to relate our approach to SDEs to the approach of Stratonovich
calculus. 

For simplicity, let us assume in this section that the driver is one dimensional.
Thus to define an SDE on a manifold, one must choose a $2$-jet of a curve at each point of the manifold. One way to specify a $k$-jet of a curve at every point in a neighbourhood is to first choose a chart for the neighbourhood and then consider
curves of the form
\begin{equation}
\gamma_x(t) = x + \sum_{i=1}^k a_i(x) t^i
\label{itoRep}
\end{equation}
where $a_i:\R^n \to \R^n$. As we have already seen 
in Lemma \ref{lemma:ito}, these coefficient functions $a_i$ depend upon the choice of chart in a relatively complex way. For example for $2$-jets the coefficient functions are not vectors but instead transform according to It\^o's lemma. We will call this the {\em standard representation} for a family of $k$-jets.

An alternative way to specify the $k$-jet of a curve at every point is to choose $k$ vector fields $A_1$, \ldots, $A_k$ on the manifold. One can then define $\Phi_{A_i}^t$ to be the vector flow associated with the vector field $A_i$. This allows one to define curves at each point $x$ as follows.
\begin{equation}
\gamma_x(t) = \Phi^{t^k}_{A_k} ( \Phi^{t^{k-1}}_{A_{k-1}} ( \ldots ( \Phi^t_{A_1}(x)) \ldots ))
\label{stratRep}
\end{equation}
where $t^k$ denotes the $k$-th power of $t$.
We will call this the {\em vector representation} for a family of $k$-jets.
It is not immediately clear that all $k$-jets of curves can be written in this way. Let us prove this.

Suppose one chooses a chart and attempts to compute the relationship between the coefficients $a_i$ in the standard representation and the components of the vector fields $A_i$ in the vector representation. 
It is clear that the $O(t)$ term $a_1(x)$ will depend bijectively and linearly on $A_1(x)$. Thus there is a bijection between $1$-jets written in the form \eqref{itoRep} and $1$-jets written in the form \eqref{stratRep}. The $O(t^2)$ term will depend linearly upon $A_2(x)$ together with a more complex term derived from $A_1$ and its first derivative. Symbolically $a_1(x) = \rho(A_2(x)) + f(A_1(x), (\nabla A_1)(x))$ where $\rho$ is a linear bijection determined by the choice of chart. It follows that
there is also a bijection between $2$-jets written in the standard form and $2$-jets written in the vector form. Inductively we have:
\begin{theorem}
Smooth $k$-jets of a curve can be defined uniquely by a list of $k$ vector fields $X_1$, \ldots $X_k$
according to the formula \eqref{stratRep}.
\end{theorem}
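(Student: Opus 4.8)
The plan is to prove the claimed bijection by induction on $k$ after fixing an arbitrary chart, exploiting the fact that the flow entering at the $j$-th stage perturbs the curve only at order $t^j$ and higher. Concretely, I would set $y_0 = x$ and define recursively $y_j = \Phi^{t^j}_{A_j}(y_{j-1})$, so that $\gamma_x(t) = y_k$ is exactly the vector representation \eqref{stratRep}. The engine of the whole argument is the elementary Taylor expansion of a flow, $\Phi^s_A(p) = p + s\,A(p) + O(s^2)$, specialised to $s = t^j$.

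First I would establish the key structural observation: for every $j \ge 1$, the curves $y_j$ and $y_{j-1}$ have the same $(j-1)$-jet, and the coefficient of $t^j$ in $y_j$ equals the coefficient of $t^j$ in $y_{j-1}$ plus $A_j(x)$. This follows because $y_j - y_{j-1} = t^j A_j(y_{j-1}) + O(t^{2j})$; since $2j > j$ the quadratic-in-$s$ correction is pushed beyond order $t^j$, while $y_{j-1} = x + O(t)$ forces $A_j(y_{j-1}) = A_j(x) + O(t)$, so that $t^j A_j(y_{j-1}) = t^j A_j(x) + O(t^{j+1})$. In particular the value $A_j(x)$ enters the $j$-jet additively and linearly, and no derivative of $A_j$ (nor any higher-order data of $A_j$) appears at or below order $t^j$.

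With this in hand the induction is immediate. For $k=1$ the representation gives $a_1(x) = A_1(x)$, a linear bijection between $1$-jets in standard form \eqref{itoRep} and single vector fields. For the inductive step, the $(k-1)$-jet of $\gamma_x = y_k$ coincides with the $(k-1)$-jet of $y_{k-1}$, which is precisely the vector representation built from $A_1, \ldots, A_{k-1}$; by the inductive hypothesis these coefficients $a_1, \ldots, a_{k-1}$ are in bijection with $(A_1, \ldots, A_{k-1})$. The top coefficient is then $a_k(x) = c_k(x) + A_k(x)$, where $c_k(x)$ is the coefficient of $t^k$ produced by $y_{k-1}$ and hence depends only on $A_1, \ldots, A_{k-1}$ and their derivatives. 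Once $A_1, \ldots, A_{k-1}$ are fixed, $c_k$ is a fixed function and $A_k \mapsto a_k$ is the translation $A_k(x) \mapsto c_k(x) + A_k(x)$, which is a bijection. Solving recursively --- first matching $a_1, \ldots, a_{k-1}$ to recover $A_1, \ldots, A_{k-1}$, then setting $A_k(x) := a_k(x) - c_k(x)$ --- produces the unique tuple of vector fields realising a prescribed $k$-jet, giving both surjectivity and injectivity. Since flows are intrinsic, a bijection proved in one chart is chart-independent, so the statement holds as a statement about jets.

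The hard part will be the careful order bookkeeping in the key observation, namely verifying rigorously that composing with the outer flow $\Phi^{t^j}_{A_j}$ leaves the lower-order coefficients untouched and contributes only the clean additive term $A_j(x)$ at order $t^j$. The potential pitfall is that a naive expansion seems to mix the flows' nonlinear self-corrections and the $\nabla A_j$ terms into lower orders; the resolution is the inequality $2j > j$ together with the fact that every correction to the pure translation $A_j(x)$ is accompanied by an extra power of $t$. I expect this degree-counting, rather than any deep geometric input, to be the only real content of the proof.
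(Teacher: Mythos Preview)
Your proposal is correct and takes essentially the same approach as the paper: fix a chart and argue by induction on $k$, using that the outermost flow $\Phi^{t^k}_{A_k}$ leaves the $(k-1)$-jet unchanged and contributes $A_k(x)$ additively at order $t^k$, so that $a_k(x) = c_k(x) + A_k(x)$ with $c_k$ determined by $A_1,\ldots,A_{k-1}$. Your write-up is in fact more explicit than the paper's, which sketches the same triangular structure (writing $a_2(x) = \rho(A_2(x)) + f(A_1,\nabla A_1)$ with $\rho$ a linear bijection) and then simply says ``inductively we have''; your degree-counting via $2j>j$ makes precise exactly the step the paper leaves to the reader.
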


Notice that the vector representation specifically allows us to define a {\em family} of $k$-jets varying from point to point. In more technical language, the vector representation allows us to specify a section of the bundle of $k$-jets. If one only specifies vectors at a point rather than vector fields, one cannot define the vector flows and so equation \eqref{stratRep} cannot be used to define a $k$-jet at the point. Thus although there is no natural map from $k$ vectors defined at a point to a $k$-jet of a curve, there is a natural map from $k$ vector fields defined in a neighbourhood to a smoothly varying choice of $k$-jet at each point.

The standard and vector representations simply give us two different coordinate systems for the infinite dimensional space of families of $k$-jets.

When this general theory about $k$-jets is applied to stochastic differential equations one sees
that two corresponding coordinate representations of a stochastic differential equation will emerge. Let us calculate in more detail correspondence between the two representations.

\begin{lemma}
Suppose that a family of $2$-jets of curves is given in the vector representation as
\[
\gamma_x(t) = \Phi_A^{t^2}( \Phi_B^t(x))
\]
for vector fields $A$ and $B$. Choose a coordinate chart and let $A^i$, $B^i$ be the components
of the vector fields in this chart. Then the corresponding standard representation for the family of
$2$-jets is
\[
\gamma_x(t) = x + a(x) t^2  + b(x) t
\]
with
\[
\begin{split}
a^i &= A^i + \frac{1}{2} \frac{\partial B^i}{\partial x^j} B^j\\
b^i &= B^i.
\end{split}
\]
\end{lemma}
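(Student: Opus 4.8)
The plan is to obtain the standard representation by computing the second-order Taylor expansion in $t$ of the composite flow $\gamma_x(t) = \Phi_A^{t^2}(\Phi_B^t(x))$ and reading off the coefficients of $t$ and $t^2$; since only the $2$-jet is relevant, I would systematically discard every term of order $O(t^3)$.

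First I would expand the inner flow $\Phi_B^t(x)$. Setting $y(t) := \Phi_B^t(x)$, the defining equation of the vector flow gives $\dot y(t) = B(y(t))$ with $y(0)=x$, and differentiating once more yields $\ddot y^i(t) = \frac{\partial B^i}{\partial x^j}(y(t))\, B^j(y(t))$. Evaluating at $t=0$ gives
\[
\Phi_B^t(x) = x + t\, B(x) + \frac{t^2}{2}\, \frac{\partial B^i}{\partial x^j}(x)\, B^j(x) + O(t^3),
\]
with the free component index $i$ understood on the left.

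Next I would expand the outer flow, where the key observation is that $\Phi_A$ is evaluated at parameter $t^2$ rather than $t$. Hence its linear term first appears at order $t^2$ and its quadratic term only at order $t^4$, so that for any point $z$ one has $\Phi_A^{t^2}(z) = z + t^2 A(z) + O(t^4)$. Substituting $z = \Phi_B^t(x) = x + O(t)$ and using smoothness of $A$ to write $A(z) = A(x) + O(t)$, this contribution becomes $t^2 A(x) + O(t^3)$.

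Finally I would combine the two expansions and collect powers of $t$, obtaining
\[
\gamma_x(t) = x + t\, B(x) + t^2\left( A(x) + \frac{1}{2}\frac{\partial B^i}{\partial x^j}(x)\, B^j(x) \right) + O(t^3),
\]
so that comparison with $\gamma_x(t) = x + b(x)\,t + a(x)\,t^2$ gives $b^i = B^i$ and $a^i = A^i + \frac{1}{2}\frac{\partial B^i}{\partial x^j}B^j$. The computation is entirely routine, and the smoothness of $A$ and $B$ (ensuring the flows are jointly smooth in $t$ and $x$) justifies every expansion; the only point genuinely requiring care — and the source of the asymmetry in the formula for $a^i$ — is that the exponent $t^2$ on the outer flow prevents its nonlinearity from ever reaching second order in $t$, whereas the inner flow's nonlinearity does contribute the extra $\frac{1}{2}\frac{\partial B^i}{\partial x^j}B^j$ term.
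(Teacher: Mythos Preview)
Your proof is correct and follows essentially the same approach as the paper: both compute the second-order Taylor expansion of $\gamma_x(t)=\Phi_A^{t^2}(\Phi_B^t(x))$ using that $\dot y = B(y)$, $\ddot y^i = (\partial_j B^i)B^j$, and that the outer flow contributes only $t^2 A$ at this order. The paper carries this out by explicitly applying the chain rule to $\frac{\partial}{\partial t}$ and $\frac{\partial^2}{\partial t^2}$ of the composition and then evaluating at $t=0$, whereas you substitute truncated Taylor series directly, which is a slightly more economical presentation of the same computation.
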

\begin{proof}
By definition of the flow $\Phi^t_B$, its components $(\Phi^t_B)^i$ satisfy
\begin{equation}
\frac{ \partial (\Phi^t_B(x))^i }{\partial t} = B^i(\Phi^t_B(x)).
\label{dPhiB}
\end{equation}
Differentiating this we have
\begin{equation}
\begin{split}
\frac{ \partial^2 (\Phi^t_B(x))^i }{\partial t^2} &= \frac{ \partial B^i}{\partial x^j} \frac{\partial (\Phi^t_B)^j}{\partial t} \\
&= B^j \frac{ \partial B^i}{\partial x^j}.
\end{split}
\label{d2PhiB}
\end{equation}
We now compute the derivatives of $\gamma_t(x)$. We write $(\Phi_A^{t^2})^i$ for the $i$-th
component of the function $\Phi_A^{t^2}$.
\[
\frac{\partial}{\partial t} ((\Phi_A^{t^2})^i(\Phi_B^t(x)))
=
2 t \frac{\partial (\Phi^{t^2}_A)^i}{\partial t}( \Phi^t_B )
+ \frac{ \partial (\Phi_A^{t^2})^i }{ \partial x^j}( \Phi^t_B ) \frac{ \partial (\Phi^t_B)^j }{\partial t}
\]
Differentiating this again one obtains
\[
\begin{split}
\frac{\partial^2}{\partial t^2} ((\Phi_A^{t^2})^i(\Phi_B^t(x)))
&=
4 t^2 \frac{\partial^2 (\Phi^{t^2}_A)^i}{\partial t^2}( \Phi^t_B )
+ 2 \frac{\partial (\Phi^{t^2}_A)^i}{\partial t}( \Phi^t_B )
+  4 t \frac{ \partial^2 (\Phi^{t^2}_A)^i}{ \partial x^j \partial t} ( \Phi^t_B ) \frac{ \partial (\Phi^t_B)^j}{\partial t} \\
&\quad
+ \frac{ \partial^2 (\Phi^{t^2}_A)^i}{\partial x^j \partial x^k } ( \Phi^t_B ) \frac{ \partial (\Phi^t_B)^j}{\partial t} \frac{ \partial (\Phi^t_B)^k}{\partial t}
+ \frac{\partial (\Phi^{t^2}_A)^i}{\partial x^j} \frac{\partial \Phi_B^t}{\partial t^2}.
\end{split}
\]
At time $t=0$ we have that $\Phi^0_A$ is simply the identity. So its partial derivatives at time $0$
are trivial to compute. Hence
\[
\begin{split}
\frac{\partial^2}{\partial t^2} ((\Phi_A^{t^2})^i(\Phi_B^t(x)))\Bigr|_{t=0}
&=
2 \frac{\partial (\Phi^{t^2}_A)^i}{\partial t}( \Phi^t_B )
+ \frac{\partial (\Phi_B^t)^i}{\partial t^2} \\
&= 2 A^i + B^j \frac{\partial B^i}{\partial x^j}.
\end{split}
\]
The last line follows from the definition of $\Phi_A^t$ as the flow associated with the vector field $A$ together with equation \eqref{d2PhiB}.
We can now write down the expression for $a(x)$. The expression for $b(x)$ follows immediately from 
equation \eqref{dPhiB}.
\end{proof}

What is interesting about this result is that an SDE
can be defined using the coefficients $a$ and $b$, which transform according
to It\^o's lemma, or they can be defined using vector fields $A$ and $B$, which transform according to the standard chain rule.

An alternative way of showing that SDEs can be defined in terms of vector
fields was already known. It is given by introducing the above-mentioned Stratonovich (or in full Fisk--Stratonovich--McShane 
\cite{fisk} \cite{mcshane} \cite{strato}) calculus. This
provides an alternative to the It\^o calculus of \cite{ito}. The coefficients of SDEs written using Stratonovich calculus transform as vector fields. Indeed these coefficients are precisely the vector fields $A$ and $B$ we have just identified geometrically. Thus we have given a geometric interpretation of how the coordinate free notion of a $2$-jet of a curve is related to the vector fields defining a Stratonovich SDE. This establishes the relationship
between our jet approach to SDEs on manifolds and the more conventional
approach of using Stratonovich calculus. This shows that we may view the choice between It\^o or Stratonovich calculus simply as a choice of coordinates for a single underlying geometric structure.
For readers not familiar with the It\^o and Stratonovich stochastic calculi in a given coordinate system we refer to \ifarxiv{Appendix \ref{appendix:classicalFormulation}.}{Appendix A of the preprint version \cite{arxivVersion}.}

It is worth expanding the discussion on the different stochastic calculi in the light of the above result. 
Despite the initial seminal paper by It\^o \cite{ito2} in a It\^o calculus context, Stratonovich calculus has been the main calculus when interfacing stochastic analysis with differential geometry \cite{elworthy,rogers}. 
We used Stratonovich calculus ourselves in our past works on stochastic differential geometry applications to signal processing \cite{armstrongprojfilter}, see also  \cite{brigoieee}, \cite{brigobernoulli}.
Stratonovich calculus has become so dominant in stochastic differential geometry that some authors
have even asserted that stochastic differential geometry {\em requires} the use of Stratonovich calculus.
As It\^o's paper long ago demonstrated, this is not the case. Indeed under the formalism one can argue that it is still the It\^o calculus that ``does all the work'' (\cite{rogers}, Chapter V.30, p. 184).

From the point of view of this paper, we consider these two calculi as being simply different coordinate systems for the same underlying coordinate-free stochastic differential equation.
As such one should choose the most convenient coordinate system for the problem at hand. Stratonovich calculus has some clear advantages, most notably the transformation rule for vector fields is simpler than that for $2$-jets written using the standard representation. Another advantage of Stratonovich calculus is its stability with respect to regular noise in Wong-Zakai type results. It\^o calculus also has some clear advantages, it has simpler probabilistic properties stemming from the fact that the It\^o integral is non-anticipative and results in a martingale.

If one likes to think of differential geometry from an extrinsic perspective (i.e. if one doesn't like to think in terms of charts, but instead in terms of manifolds embedded in $\R^n$) then one can define an SDE on an embedded manifold by means of an SDE in $\R^n$ whose Stratonovich coefficients are vectors tangent to the manifold. This makes Stratonovich calculus a convenient way to introduce stochastic calculus on manifolds without needing to introduce the abstract definition of a manifold. However, this convenience does not imply that Stratonovich calculus is an essential tool for defining SDEs on manifolds. We note that our $2$-jet approach gives an equally simple approach to defining SDEs on embedded manifolds in terms of SDEs on $\R^n$. One simply requires that the $2$-jet at each point of the embedded manifold has a representative that lies entirely in that manifold. We could also say, in the embedded framework, that one simply requires that the curvature of the $2$-jet of our curve follows the curvature of the manifold.

In general, since Stratonovich calculus and It\^o calculus are just two coordinate systems one would expect to be able to work using either calculus interchangeably. The most important difference between Stratonovich calculus and It\^o calculus arises during 
the modelling process. It is when choosing what equation to write down in the first place that the choice between the calculi is most telling. Note that the modelling process is not a strictly mathematical process: it relies upon the modellers intuition.

From a modelling point of view It\^o calculus has a strong advantage for applications to subjects such as mathematical finance, where one considers decision makers who cannot use information about the future. The non-anticipative nature of the It\^o integral will make it easier to write down models for such problems using It\^o calculus.  On the other hand, in subjects such as physics or engineering, conservation laws, time-symmetry and the Wong-Zakai type convergence results mentioned in Appendix \ref{appendix:classicalFormulation} lead one to choose Stratonovich calculus. See for example the discussion in \cite{vankampen} on Langevin equations with exogenous noise, and the optimality criteria for projection on submanifolds discussed in \cite{armstrongBrigoOptimalProjectionArxiv}.

\section{Percentiles and fan diagrams}
\label{fanDiagramSection}

Most statistical properties of a distribution depend upon the coordinate system used. For example the definition of the mean of a process in $\R^n$ involves the vector space structure of $\R^n$. For this reason one would expect the trajectory of the mean of a process to depend upon the vector space structure. If one makes a non-linear transformation of $\R^n$ the trajectory of the mean changes. Indeed It\^o's lemma tells us that the trajectory does not even remain constant to first order under coordinate changes.

Another manifestation of the same phenomenon is the fact that given an $\R$ valued random variable $X$ and a non-linear function $f$, then $\E(f(X)) \neq f(\E(X))$. Again this arises because the definition of mean depends upon the vector space structure and $f$ may not respect this.

However, the definition of the $\alpha$-percentile depends only upon the ordering of $\R$ and not its vector space structure. As a result, for continuous monotonic $f$ and $X$ with connected state space, the median of $f(X)$ is equal to $f$ applied to the median of $X$. If $f$ is strictly increasing, the analogous result holds for the $\alpha$ percentile. If $f$ is decreasing, the $\alpha$ percentile of $f(X)$ is given by $f$ applied to the $1-\alpha$ percentile of $X$.

This has the implication that the trajectory of the $\alpha$-percentile of an $\R$ valued stochastic process is invariant under smooth monotonic coordinate changes of $\R$. In other words, percentiles have a coordinate free interpretation. The mean does not. This raises the question of how the trajectories of the percentiles can be related to the coefficients of the stochastic differential equation. We will now 
calculate this relationship.

\medskip

First we note that all smooth one dimensional Riemannian manifolds are isomorphic.
When interpreted in terms
of SDEs, this tells us that for any one dimensional SDE with non-vanishing diffusion coefficient (or volatility term) 
\begin{equation}
\ed X_t = a(X_t,t) \, \ed t + b(X_t,t) \ed W_t, \qquad X_0=x_0,
\label{oneDimensionalSDE}
\end{equation}
we can find a coordinate system with respect to which the volatility term is equal to one.
Such a transformation is known as a Lamperti transformation  (see for example \cite{pavliotis}) and is given by $Z_t = \phi(t,X_t)$ where $\phi(t,x)$ is a primitive integral of $1/b(x,t)$ with respect to $x$. 
Let 
\[ dZ_t = \alpha(Z_t,t) dt + dW_t, \  Z_0 = z_0 = \phi(0,x_0) \]
be the transformed equation.
Since one dimensional Riemannian
manifolds are translation invariant, we have a gauge freedom in defining a Lamperti transformation
determined by the base-point of the isomorphism.  Define a path
$z^0(t)$ by the ordinary differential equation
\begin{equation*}
\frac{ \ed z^0 }{\ed t} = \alpha(z^0(t),t), \ \ z^0(0) = z_0.
\end{equation*}
If we set $Y_t = Z_t - z^0(t)$ then $Y$ follows the SDE
\begin{equation}\label{eq:LampertiSDE} dY_t = \bar{a}(Y_t,t) dt  + dW_t, \ \ \ \bar{a}(y,t) = \alpha(y + z^0(t),t)   - \alpha(z^0(t),t), \ \  Y_0 = 0,
\end{equation}
and moreover the drift of the $Y$ SDE vanishes at $0$ for all $t$, $\bar{a}(0,t) = 0$. 

Summing up, if $b(x,t)$ is nowhere vanishing, there is a unique time dependent transformation
that maps the original SDE \eqref{oneDimensionalSDE} for $X$ into a SDE with constant diffusion coefficient, zero initial condition and zero drift at zero.  Actually, sufficient conditions for the Lamperti transformed SDE \eqref{eq:LampertiSDE} to have a unique strong solutions are more stringent. For example, in the autonomous case $a(x,t) = a(x)$ and $b(x,t)=b(x)$ one may need $a$ and $b$ bounded from below and above, with $b$ and its bounds strictly positive, with $a \in C^1$ and $b \in C^2$. We will assume in the following that sufficient conditions for the solution of the Lamperti transformed SDE to exist unique hold. 

Thus given a one dimensional SDE with non-vanishing volatility
we can always make a transformation such that (subject
to bounds) the following proposition applies (we denote $\bar{a}$ by $a$ for simplicity).

\begin{proposition}
\label{prop:beforeLamperti}
Let $a(x,t)$ be a bounded smooth function on $\R \times [0,+\infty)$ with $a(0,t)=0$. Define the operators $L$ and its adjoint $L^\ast$ as
\begin{eqnarray}
Lp &=&  \frac{1}{2} \frac{\partial^2 p }{\partial x^2 } + a(x,t) \frac{\partial p}{\partial x}
- \frac{\partial p}{\partial t}, \label{heatEquation} \\ \ L^\ast p &=&  \frac{1}{2} \frac{\partial^2 p }{\partial x^2 } -   \frac{\partial (a(x,t) p) }{\partial x}
- \frac{\partial p}{\partial t}.
\end{eqnarray}
Assume further that $a(x,t)$ has additional regularity required to ensure existence and uniqueness of 
a fundamental solution for the PDEs $L p =0$ and $L^\ast p =0$. 
Let $\Gamma(x,t;\xi,\tau)$ be the fundamental solution of $L p =0$. 
Then for $\lambda \in (0,1)$ and a fixed terminal time $T$, $\Gamma$ satisfies
\[
\Gamma(x,t; 0, 0) = \frac{1}{\sqrt{2 \pi t}} \exp\left( -\frac{x^2}{2 t} \right)
+ O\left( \frac{t + x^2}{\sqrt t} \exp\left( -\frac{\lambda x^2}{2 t} \right)  \right)
\]
on $\R \times [0,T]$, and the fundamental solution of $L^\ast p =0$ satisfies
\[ \Gamma^*(0,0;x,t) = \Gamma(x,t;0,0)\]
 (see \cite{friedman} for the definition of a fundamental solution).
\end{proposition}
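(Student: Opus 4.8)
The plan is to construct $\Gamma$ by the parametrix (Levi) method, exactly as in \cite{friedman}, and then read off the short-time expansion by tracking the first parametrix correction. First I would take as parametrix the fundamental solution of the frozen principal part $\frac{1}{2}\partial_{xx}-\partial_t$, namely the Gaussian
\[
Z(x,t;\xi,\tau) = \frac{1}{\sqrt{2\pi(t-\tau)}}\exp\left(-\frac{(x-\xi)^2}{2(t-\tau)}\right),
\]
so that $Z(x,t;0,0)$ is precisely the claimed leading term. Writing $\Gamma = Z + \int_\tau^t\int_\R Z(x,t;y,s)\,\Phi(y,s;\xi,\tau)\,dy\,ds$, the density $\Phi$ solves a Volterra integral equation whose kernel is $LZ$. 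Since the principal part annihilates $Z$ away from the diagonal, this kernel simplifies to $LZ = a(x,t)\,\partial_x Z$, so the entire correction is driven by the drift.

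The crucial input is the hypothesis $a(0,t)=0$. Combined with the smoothness and boundedness of $a$ (which give a global Lipschitz bound in $x$), it yields $|a(y,s)|\le C|y|$, and together with $|\partial_y Z(y,s;0,0)| = \frac{|y|}{s}Z(y,s;0,0)$ this gives the key kernel estimate
\[
|LZ(y,s;0,0)| \le C\,\frac{y^2}{s}\,Z(y,s;0,0).
\]
The extra factor $y^2$ is exactly what produces the $(t+x^2)$ prefactor in the error term. I would then show, by the standard iteration/Gronwall argument for the Volterra series, that the full density $\Phi(y,s;0,0)$ obeys a bound of the same type, with a larger constant and a slightly smaller exponential rate.

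The heart of the estimate is the space-time convolution $\int_0^t\int_\R Z(x,t;y,s)\,|LZ(y,s;0,0)|\,dy\,ds$. Completing the square in $y$ reproduces the Gaussian $\exp(-x^2/(2t))$ times a Gaussian of variance $\sigma^2 = s(t-s)/t$ centred at $y_* = sx/t$; integrating $y^2$ against it produces a factor $\sigma(y_*^2+\sigma^2)$, and the remaining $ds$-integral evaluates to $\tfrac{1}{2}\frac{t+x^2}{\sqrt t}\exp(-x^2/(2t))$, which is precisely the advertised order. The parameter $\lambda\in(0,1)$ enters because, in order to keep clean Gaussian bounds while summing the full parametrix series and absorbing polynomial factors of the form $|y|^k\exp(-y^2/(2s))\le C_\lambda\, s^{k/2}\exp(-\lambda y^2/(2s))$, one must sacrifice a small amount of the exponential rate; the standard convolution inequalities of \cite{friedman} then deliver the bound $O\!\left(\frac{t+x^2}{\sqrt t}\exp\left(-\frac{\lambda x^2}{2t}\right)\right)$ uniformly on $\R\times[0,T]$.

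Finally, the adjoint identity $\Gamma^*(0,0;x,t)=\Gamma(x,t;0,0)$ is not a separate computation: it is the general duality between the fundamental solution of a parabolic operator and that of its formal adjoint, $\Gamma^*(\xi,\tau;x,t)=\Gamma(x,t;\xi,\tau)$, established by integration by parts against the reproducing property (see \cite{friedman}); setting $\xi=\tau=0$ gives the statement. The main obstacle I anticipate is not the idea but the bookkeeping: carefully tracking the powers of $t$ and $(t-s)$ through the convolution so that the prefactor comes out as exactly $(t+x^2)/\sqrt t$, and controlling the degradation of the exponential constant uniformly in the tail, so that the higher-order terms of the parametrix series are genuinely absorbed into the same $O(\cdot)$ as the leading correction.
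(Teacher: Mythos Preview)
Your proposal is correct and follows essentially the same route as the paper: the parametrix construction from \cite{friedman} with $Z$ the heat kernel, the observation that $LZ=a\,\partial_x Z$ combined with $|a(y,s)|\le C|y|$ to get the extra $y^2$ factor, an explicit evaluation of the first convolution $\int Z\cdot LZ$ yielding the $(t+x^2)/\sqrt t$ prefactor, Friedman's standard estimates (with the sacrificed rate $\lambda$) to control the higher iterates of the Volterra series, and the duality theorem for $\Gamma^*$. The only cosmetic difference is that the paper separates the first correction $\Gamma_1$ (bounded with the full rate $e^{-x^2/(2t)}$) from the tail $\sum_{m\ge 2}\Gamma_m$ (bounded by $Ct^{3/2}e^{-\lambda x^2/(2t)}$, hence subsumed), whereas you fold both into a single $O(\cdot)$; your anticipated ``bookkeeping'' is exactly that split.
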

\begin{proof}
Equation $L^\ast p =0$ as the evolution for the density of a solution of a SDE is studied for example in Friedman's SDE book \cite{friedmansdes}, see Eq. 5.28 in Chapter 6: by Theorem 4.7 in Ch 6  the fundamental solution of $L^\ast p=0$ is the same as the fundamental solution of $L p =0$. We will thus focus on $L p=0$, for which one can refer to Ch 1 Section 6 of Friedman's parabolic PDEs book \cite{friedman}.  
When $a$ is identically zero, the fundamental solution of $L p =0$ in \eqref{heatEquation} is given by
\[
Z(x,t;\xi,\tau) = \frac{1}{\sqrt{2 \pi (t - \tau)}} \exp\left( -\frac{(x-\xi)^2}{2 (t- \tau)} \right).
\]
By (\cite{friedman} Ch 1, Theorem 10, p.\ 23), the fundamental solution of \eqref{heatEquation} is given by
\begin{equation}
\Gamma(x,t;\xi,\tau) = Z(x,t;\xi,\tau) + \int_{\tau}^{t}\int_{\R} Z(x,t; \eta, \sigma)
\Phi(\eta, \sigma; \xi,\tau) \, \ed \eta \, \ed \sigma
\end{equation}
where
\begin{equation*}
\Phi(x,t;\xi,\tau) = \sum_{i=1}^{\infty} \Phi_i(x,t; \xi, \tau)
\end{equation*}
where we define $\Phi_1$=$LZ$ and for $i \ge 1$ we define
\begin{equation*}
\Phi_{i+1}(x,t;\xi,\tau) = \int_{\tau}^{t} \int_{\R} L\ Z(x,t;y,\sigma) \Phi_i(y,\sigma,\xi,\tau)
\, \ed y \, \ed \sigma.
\end{equation*}
Let us define:
\[
\Gamma_i(x,t;\xi,\tau) := \int_\tau^t \int_{\R} Z(x,t;\eta,\theta) \Phi_i( \eta, \theta; \xi,\tau) \, \ed \eta \, \ed \theta.
\]

First we bound  $\Gamma_1$.
\begin{equation*}
\begin{split}
\Gamma_1(x,t;\xi,\tau) 
&= \int_{\tau}^{t}\int_{\R} Z(x,t; \eta, \sigma)
LZ(\eta, \sigma; \xi,\tau) \, \ed \eta \, \ed \sigma \\
&= \int_{\tau}^{t}\int_{\R} Z(x,t; \eta, \sigma)
a(\eta,\sigma) \frac{\partial}{\partial \eta} Z(\eta, \sigma; \xi,\tau) \, \ed \eta \, \ed \sigma. \\
\end{split}
\end{equation*}
The final line follows because $Z$ is the fundamental solution of \eqref{heatEquation} when $a$ is
equal to zero and so the parts of $\Gamma_1$ that don't involve $a$ vanish.
Our assumptions ensuring Lipschitz continuity of $a(x,t)$ uniformly in $t$ and the fact that $a(0,t)=0$ for all $t$ (see \eqref{eq:LampertiSDE}) tell us that there is some constant with $|a(x, t)|<C|x|$ for all $x$
and $t \in [0,T]$. So
\[
\left|a(\eta,\sigma) \frac{\partial}{\partial \eta} Z(\eta, \sigma; 0,0) \right|
= \left|-\frac{1}{\sqrt{2 \pi \sigma}} a(\eta, \sigma) \eta \exp\left( -\frac{\eta^2}{2 \sigma} \right)\right|
< \frac{C}{\sqrt{2 \pi \sigma}} \eta^2 \exp\left( -\frac{\eta^2}{2\sigma} \right).
\]
We deduce that
\begin{equation*}
|\Gamma_1(x,t;0,0)| \leq C \int_0^t \int_\R Z(x,t;\eta,\sigma) \eta^2
\frac{1}{\sqrt{2 \pi \sigma}} \exp\left( -\frac{\eta^2}{2\sigma} \right) \ed \eta \, \ed \sigma
=  C \frac{e^{-\frac{x^2}{2 t}} \left(t+x^2\right)}{2 \sqrt{2 \pi t }}.
\end{equation*}
The final step follows simply by the routine evaluation of the integral. 
To do this we use estimates which were used in \cite{friedman} to prove that our expression
for $\Gamma$ exists and is a fundamental solution. First, we have for any $\lambda \in (0,1)$ that
\begin{equation*}
|Z(x,t,\xi,\tau)| \leq \frac{1}{(t-\tau)^\frac{1}{2}}
\exp\left( -\frac{\lambda|x-\xi|^2}{2(t - \tau)} \right).
\end{equation*}
From (\cite{friedman} Ch 1, 4.14,  p.\ 16) we have that there exist positive constants $H$ and $H_0$ such that for all $\lambda \in (0,1)$ and $m\geq2$
\begin{equation*}
|\Phi_m(x,t; \xi, \tau)| \leq \frac{H_0 H^m}{\Gamma_E(m)} (t-\tau)^{m - \frac{3}{2}} \exp\left( -\frac{\lambda|x-\xi|^2}{2(t-\tau)} \right)
\end{equation*}
where $\Gamma_E$ is the gamma function. From (\cite{friedman} Ch 1, Lemma 3 p.\ 15) we have that if
$r<\frac{3}{2}$ and $s<\frac{3}{2}$ then
\begin{multline*}
\int_{\sigma}^{\tau}\int_\R
(t-\tau)^{-r} \exp\left( -\frac{h(x-\xi)^2}{2(t-\tau)}  \right)
(\tau-\sigma)^{-s} \exp\left( -\frac{h(x-\xi)^2}{2(t-\sigma)} \right) \, \ed \xi \, \ed \sigma \\
= \sqrt{\frac{2 \pi}{h}}
B\left( \frac{3}{2}-r,\frac{3}{2}-s\right)(t-\sigma)^{\frac{3}{2}-r-s}
\exp\left( - \frac{h(x-y)^2}{2(t - \sigma)} \right)
\end{multline*}
where $B$ is the beta function. Taking $r=1/2$ and $s=\frac{3}{2}-m$ we obtain the estimate
\[
\begin{split}
|\Gamma_m(x,t; \xi, \tau)|
& \leq
\frac{H_0 H^m}{\Gamma_E(m)}
\sqrt{\frac{2 \pi}{\lambda}}
B( 1,m)(t-\tau)^{-\frac{1}{2}+m}
\exp\left( - \frac{\lambda(x-\xi)^2}{2(t - \tau)} \right) \\
&= \frac{H_0 H^m}{m!} \sqrt{\frac{2 \pi}{\lambda}}
(t-\tau)^{-\frac{1}{2}+m}
\exp\left( - \frac{\lambda(x-\xi)^2}{2(t - \tau)} \right)
\end{split}.
\]
So
\[ 
\begin{split}
\sum_{k=2}^\infty
|\Gamma_m(x,t; 0, 0)| &\leq \sum_{m=2}^\infty H_0 \frac{H^m}{m!} t^{-\frac{1}{2}+m}
\exp \left(-\frac{\lambda x^2}{2 t} \right) \\
&=  \sqrt{\frac{2 \pi}{\lambda}} H_0 H^2 \exp(H t )  (t)^{3/2} \exp \left(-\frac{\lambda x^2}{2 t} \right)
\end{split}
\]
The result follows if we conclude by invoking Theorem 15, Ch 1, p. 28 of \cite{friedman}, or Theorem 4.7 in Ch 6  of \cite{friedmansdes}.
\end{proof}

\begin{theorem}
For sufficiently small $t$, the $\alpha$-th percentile of the solutions to
\eqref{oneDimensionalSDE} is given by
\begin{equation}\label{eq:percentilejetsalpha}
x_0 + b_0 \sqrt{t} \Phi^{-1}(\alpha) + \left(a_0 - \frac{1}{2} b_0 b_0^\prime (1- \Phi^{-1}(\alpha)^2 )\right)t + O(t^{3/2})
\end{equation}
 so long as the coefficients of \eqref{oneDimensionalSDE} are smooth, the diffusion coefficient $b$ never vanishes, and  sufficient conditions for the Lamperti transformed SDE and for $L^\ast p=0$ to have a unique regular solution hold.  
In this formula $a_0$ and $b_0$ denote the values of $a(x_0,0)$ and $b(x_0,0)$ respectively. 
In particular, the median process is a straight line up to $O(t^\frac{3}{2})$ with tangent given by the drift of the Stratonovich version of the It\^o SDE \eqref{oneDimensionalSDE}. The $\Phi(1)$ and $\Phi(-1)$ percentiles correspond up to $O(t^\frac{3}{2})$ to the curves $\gamma_{X_0}(\pm \sqrt{t})$ where $\gamma_{X_0}$ is any representative of the $2$-jet that defines the SDE in It\^o form. 
\end{theorem}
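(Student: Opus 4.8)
The plan is to prove the formula by reducing to the Lamperti-transformed equation \eqref{eq:LampertiSDE}, reading off the quantile of that equation from the heat-kernel expansion in Proposition~\ref{prop:beforeLamperti}, and then transporting the quantile back through the (nonlinear) inverse Lamperti map, which is where the $\Phi^{-1}(\alpha)^2$ term will be generated. Throughout I abbreviate $\xi := \Phi^{-1}(\alpha)$, $a_0 := a(x_0,0)$, $b_0 := b(x_0,0)$, $b_0' := \partial_x b(x_0,0)$, and I write $q_X(\alpha,t)$, $q_Z(\alpha,t)$, $q_Y(\alpha,t)$ for the $\alpha$-quantiles of $X_t$, $Z_t=\phi(t,X_t)$ and $Y_t=Z_t-z^0(t)$ respectively. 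First I would invoke the percentile-invariance principle stated at the start of this section: since $b$ is smooth and nowhere zero it has constant sign, so the spatial map $\phi(t,\cdot)$ is strictly monotonic and the $\alpha$-quantile commutes with $\phi^{-1}(t,\cdot)$. This gives $q_X=\phi^{-1}(t,q_Z)$ and $q_Z=z^0(t)+q_Y$.

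The key step is to show $q_Y(\alpha,t)=\sqrt{t}\,\xi+O(t^{3/2})$. The density of $Y_t$ is the forward fundamental solution $\Gamma^{*}(0,0;y,t)=\Gamma(y,t;0,0)$, so I would apply Proposition~\ref{prop:beforeLamperti} directly. Integrating the leading Gaussian gives the CDF $\Phi(y/\sqrt{t})$, while the error term integrates, after the substitution $y=\sqrt{t}\,u$, to $O(t)$ uniformly in $y$, since $\int_{\R}(1+u^2)e^{-\lambda u^2/2}\,\ed u<\infty$. Solving $\Phi(q_Y/\sqrt{t})+O(t)=\alpha$ and inverting $\Phi$ yields $q_Y/\sqrt{t}=\xi+O(t)$, i.e. $q_Y=\sqrt{t}\,\xi+O(t^{3/2})$. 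This is precisely where the vanishing of the drift at the origin, $\bar a(0,t)=0$, is used: it keeps the density centred at $0$ to the required order, so the drift of the transformed equation produces \emph{no} $O(t)$ shift of its quantile. I expect this conversion of the density asymptotics into a quantile asymptotic with the sharp $O(t^{3/2})$ remainder to be the main obstacle; the remainder of the argument is Taylor bookkeeping.

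It then remains to transport the quantile back. Writing $\psi(t,z):=\phi^{-1}(t,z)$, expanding $z^0(t)=z_0+\alpha(z_0,0)\,t+O(t^2)$ from its defining ODE, and setting $\delta z:=q_Z-z_0=\alpha(z_0,0)\,t+\sqrt{t}\,\xi+O(t^{3/2})=O(\sqrt{t})$, I would Taylor expand $\psi$ about $(0,z_0)$, retaining all terms through order $t$ (so $(\delta z)^3$ and $t\,\delta z$ are discarded):
\[
q_X=x_0+\psi_t(0,z_0)\,t+\psi_z(0,z_0)\,\delta z+\tfrac12\,\psi_{zz}(0,z_0)\,(\delta z)^2+O(t^{3/2}).
\]
Differentiating the identity $\psi(t,\phi(t,x))=x$ in $x$ and $t$ and using $\phi_x=1/b$, $\phi_{xx}=-\partial_x b/b^2$ gives $\psi_z(0,z_0)=b_0$, $\psi_{zz}(0,z_0)=b_0 b_0'$ and $\psi_t(0,z_0)=-b_0\,\phi_t(0,x_0)$, while the It\^o computation for the Lamperti map gives the $Z$-drift $\alpha=\phi_t+a/b-\tfrac12\partial_x b$, hence $\alpha(z_0,0)=\phi_t(0,x_0)+a_0/b_0-\tfrac12 b_0'$.

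Substituting, the $\sqrt{t}$-coefficient is $b_0\,\xi$, and the $t$-coefficient is $-b_0\phi_t(0,x_0)+b_0\alpha(z_0,0)+\tfrac12 b_0 b_0'\,\xi^2$, where only the $\sqrt{t}\,\xi$ part of $\delta z$ contributes to $(\delta z)^2$ at order $t$. The two $\phi_t$ contributions cancel, leaving the coefficient $a_0-\tfrac12 b_0 b_0'(1-\xi^2)$, which is exactly \eqref{eq:percentilejetsalpha}. Finally, putting $\alpha=\tfrac12$ gives $\xi=0$ and the straight-line tangent $a_0-\tfrac12 b_0 b_0'$, i.e. the Stratonovich drift of \eqref{oneDimensionalSDE}, proving the median claim; putting $\xi=\pm1$ kills the factor $1-\xi^2$ and leaves $x_0\pm b_0\sqrt{t}+a_0 t+O(t^{3/2})$, which equals $\gamma_{x_0}(\pm\sqrt{t})+O(t^{3/2})$ for the quadratic representative $\gamma_{x_0}(s)=x_0+b_0 s+a_0 s^2$, any two representatives of the $2$-jet differing only at $O(s^3)=O(t^{3/2})$.
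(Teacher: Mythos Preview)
Your proposal is correct and follows essentially the same route as the paper: Lamperti-transform and recentre, use Proposition~\ref{prop:beforeLamperti} to obtain $q_Y=\sqrt{t}\,\Phi^{-1}(\alpha)+O(t^{3/2})$, then Taylor-expand the inverse transformation to second order and read off the coefficients. The only cosmetic difference is that the paper works with the single composite inverse $g(y,t)=\psi(t,y+z^0(t))$ and extracts $g_y,g_{yy},g_t$ by applying It\^o's lemma to $g$, whereas you separate the shift by $z^0(t)$ from the spatial inverse $\psi$ and obtain $\psi_z,\psi_{zz},\psi_t$ by implicit differentiation together with the forward Lamperti drift formula; the resulting algebra and the cancellation of the $\phi_t$ terms are identical.
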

\begin{proof}
We first apply a Lamperti transformation so that the conditions of Proposition \ref{prop:beforeLamperti}
apply. Let write $y$ for the coordinates after applying the Lamperti transformation and let
us write $\rho$ for the 1-form representing the probability measure. By Proposition \ref{prop:beforeLamperti}
\[
\rho = \left[ \frac{1}{\sqrt{2 \pi t}} \exp\left(-\frac{y^2}{2 t}\right) + O\left( \frac{t + y^2}{\sqrt t} \exp\left( -\frac{\lambda y^2}{2 t} \right) \right) \right] \ed y.
\]
We introduce a new coordinate $z = \frac{y}{\sqrt{t}}$ so that the $\rho$ can be written
\[
\rho = \left[ \frac{1}{\sqrt{2 \pi}} \exp\left(-\frac{z^2}{2}\right) + O\left( t(1+z^2) \exp\left( -\frac{\lambda z^2}{2} \right) \right) \right] \ed z.
\]
Integrating this, we see that for sufficiently small $\epsilon$ we have a uniform estimate
\[
\int_{-\infty}^{\Phi^{-1}(\alpha+\epsilon)} \rho = \alpha + \epsilon + O( t ).
\]
It follows that the $\alpha$-th percentile in $z$ coordinates is $\Phi^{-1}(\alpha) + O(t)$.
Hence in $y$ coordinates it is $\phi^{-1}(\alpha)\sqrt{t} + O(t^{3/2})$.

Let $g$ denote the inverse of the Lamperti transformation that we have taken. Let us write
the Taylor series expansion for $g$ around (0,0) as
\begin{equation}\label{eq:gtaylor}
g(y,t)=x_0 + g_y y + \frac{1}{2} g_{yy} y^2 + g_t t + O(y^3 + t^2) .
\end{equation}
By construction the SDE in $y$ coordinates is of the form
\[
\ed Y_t = \overline{a} \ed t + \ed W_t
\]
with $\overline{a}(0,t)=0$. When we apply $g$ to this equation we will get equation
\eqref{oneDimensionalSDE}. Using It\^o's lemma we deduce that:
\[
b = g_y, \quad
a_0 = g_t + \frac{1}{2} g_{yy}.
\]
The first of these equations holds everywhere, the second uses the vanishing of $\overline{a}$
at $0$.
To avoid notational ambiguity in partial differentiation, we will temporarily write
$s$ for the time coordinate when paired with $x$ and $t$ for the time coordinate when
paired with $y$. Thus we are making the coordinate transformation
\[
x = g(y,t), \quad s = t.
\] 
We then have that
\[
\frac{\partial b}{\partial x} = \frac{\partial b}{\partial y} \frac{ \partial y}{\partial x}
+ \frac{\partial b}{\partial s} \frac{\partial s}{\partial x}
= g_{yy} \frac{1}{g_y}.
\]
So that we have that at $0$
\[
g_y = b_0, \quad
g_{yy} = b_0 b_0^\prime, \quad
g_t = \left( a_0 - \frac{1}{2} b_0 b_0^\prime \right) .
\]
Substituting these formulae 
into the Taylor series expansion \eqref{eq:gtaylor} for $g$ yields
\[
g(y,t) = x_0 + b_0 y + \frac{1}{2} b_0 b_0^\prime y^2 + (a_0 - \frac{1}{2} b_0 b_0^\prime) t + O(y^3+t^{2}).
\]
Substitute $y=\Phi^{-1}(\alpha)\sqrt{t}$ to get
\[
x_0 + b_0 \Phi^{-1}(\alpha)\sqrt{t} + \frac{1}{2} b_0 b_0^\prime \Phi^{-1}(\alpha)^2 t + (a_0 - \frac{1}{2} b_0 b_0^\prime) t
+ O(t^{3/2}).
\]
This simplifies to \eqref{eq:percentilejetsalpha}.

\end{proof}

The theorem above has given us the median as a special case, and a link between the median and the Stratonovich version of the SDE. 
By contrast the mean process has tangent given by the drift of the It\^o SDE as the It\^o integral is a martingale.

For completeness, besides the mean and the median, we wish to consider the mode. We claim that under the same conditions as the theorem above, there are paths $m^u(t)$ and $m^l(t)$ both satisfying
\[
\begin{split}
m^u &= x_0 + a(x_0,0)t - \frac{3}{2} b(x_0,0) b^\prime(x_0,0) t + O(t^\frac{3}{2}) \\
m^l &= x_0 + a(x_0,0)t - \frac{3}{2} b(x_0,0) b^\prime(x_0,0) t + O(t^\frac{3}{2}) 
\end{split}
\]
such that for sufficiently small $t$ there exists a mode lying in $[m^l(t), m^u(t)]$
This relationship between the mean, median and mode is approximately seen in many general probability distributions as was observed qualitatively by Pearson \cite{pearson}.
 
This result gives an alternative way of plotting the two jet that defines a one dimensional stochastic differential equation in terms of fan diagrams.
A {\em fan diagram} is a standard tool in econometrics for illustrating the predictions
of a model. In Figure \ref{fig:genericfandiagram} we have plotted a fan diagram for a stock price modelled by geometric Brownian motion. The negative times in the plot show historical values for the stock price.
For positive times, we plot a single random realization of geometric Brownian motion together with
two percentiles that indicate the range of values attained by other realizations. We have
chosen to plot the percentiles $\Phi(1) \approx 84\%$ and $\Phi(-1) \approx 16\%$.

We can use the result above to plot an SDE by drawing an infinitesimal fan-
diagram at each point.
At each point $x \in \R$ one plots the curve $(t, \gamma_x(\pm \sqrt{t}))$.
One interprets this diagram as an infinitesimal fan-diagram showing the $\Phi(1)$ and $\Phi(-1)$ percentiles. Such a plot is shown for the process $\ed S_t= \sigma S_t \ed W_t$ in the left hand panel of \ref{fig:fanDiagram}.

\begin{figure}[ht]
\centering
\begin{minipage}[t]{0.45\linewidth}
\includegraphics[width=0.9\linewidth]{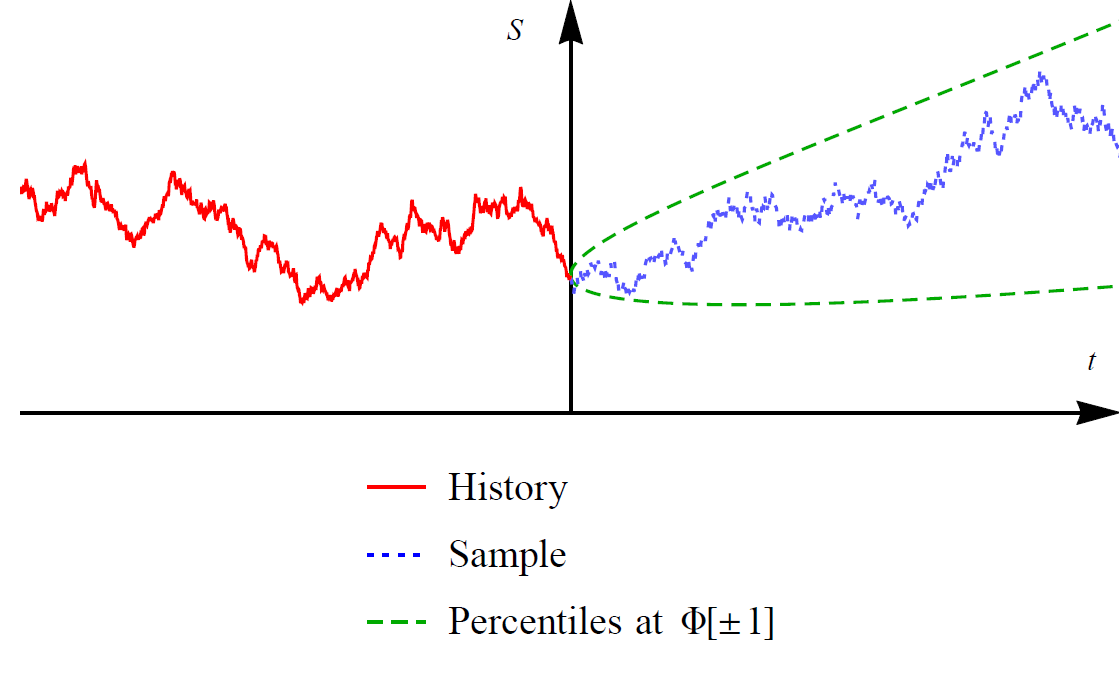}
\caption{A fan diagram for a stock price, $S$ modelled using geometric Brownian motion}
\label{fig:genericfandiagram}
\end{minipage}
\hfill
\begin{minipage}[t]{0.45\linewidth}
\includegraphics[width=0.4\linewidth]{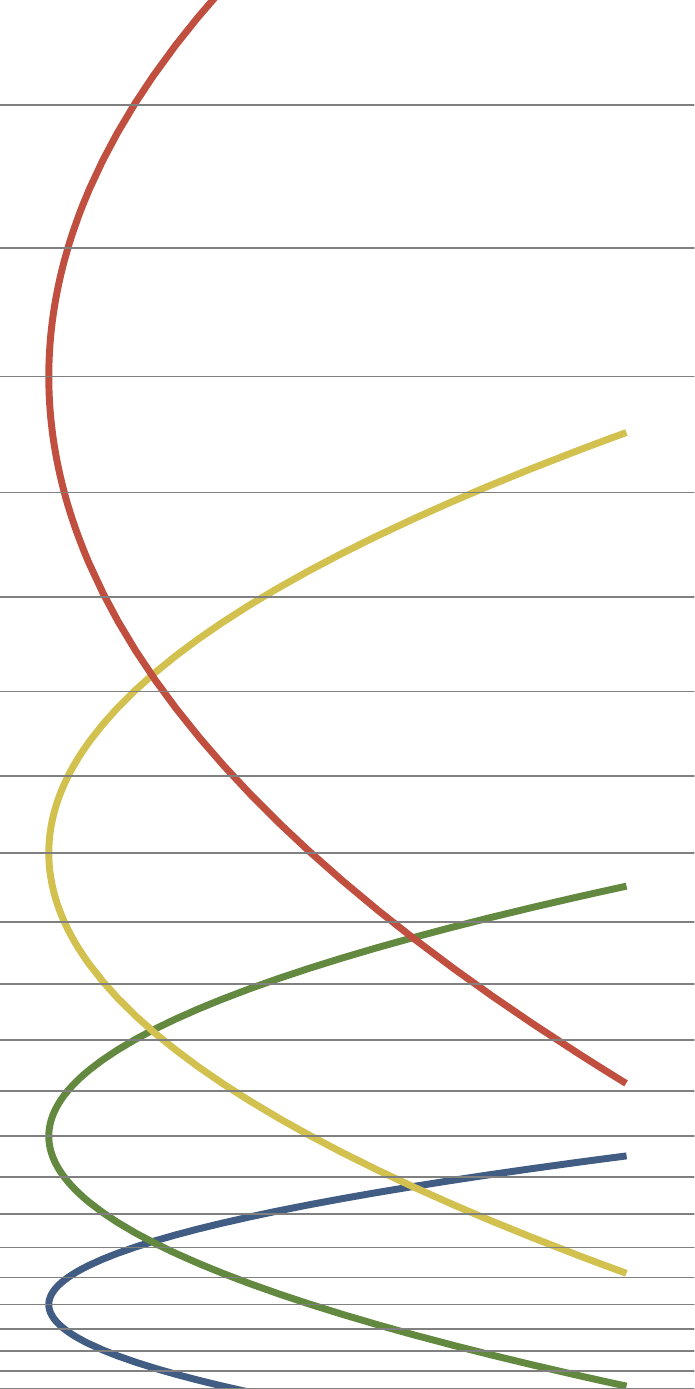}
\hfill
\includegraphics[width=0.4\linewidth]{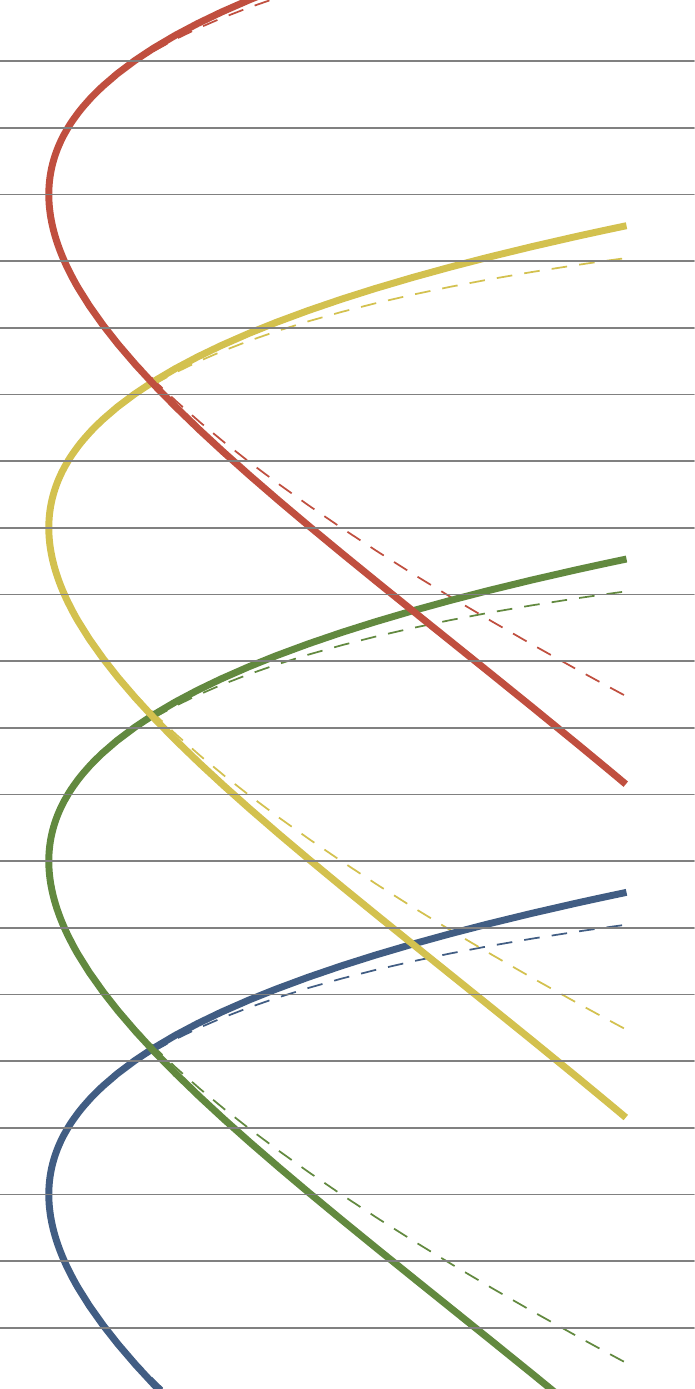}
\caption{A fan diagram of $\ed S_t = S_t \sigma_t \ed W_t$ (left). The result of applying $(x,y)\to(x,\log y)$ to this fan diagram (right, solid line). A fan diagram of equation \eqref{logProcess} (right, dashed line). }
\label{fig:fanDiagram}
\end{minipage}
\end{figure}

In the right hand panel of \ref{fig:fanDiagram} we show how this plot transforms when
one sends $(t,S_t)$ to $(t, \log(S_t))$. This is illustrated with solid lines. We also
use dashed lines to plot the corresponding diagram for the equation arising from It\^o's lemma, namely
\begin{equation}
\ed (\log(S_t)) = -\frac{1}{2}\sigma^2 \ed t + \sigma \, \ed W_t.
\label{logProcess}
\end{equation}

It is interesting to notice that one can clearly see the drift term in the right hand side
of \ref{fig:fanDiagram}. Notice also that this drift arises because the spacing between
the grid lines on the left hand side of \ref{fig:fanDiagram} increases as one moves up
the page whereas the corresponding grid lines after transformation are evenly spaced. This
can be interpreted as a visual demonstration that the It\^o term in the transformation rule for SDEs
is determined by the second derivative of the transformation.

\section{Conclusions and further work}
In this first work we showed how It\^o SDEs driven by Brownian motion could be understood in terms of 2-jets of maps.
 In further work we will study more in detail the relationship between the jets approach and Schwartz morphism based on second order tangent vectors and co-vectors (as in the Schwartz - Meyer theory explained in  Emery's work \cite{emery}). 
 We will study the relationship with Belopolskaja and Dalecky's It\^o bundle \cite{belopolskaja}, see also \cite{gliklikh} and the appendix in \cite{brzezniak}.
 We will explore the jet approach in connection with projection method for dimensionality reduction and approximation of SDEs \cite{armstrongBrigoOptimalProjectionArxiv,armstrongbrigoicms}. We will explore practical applications of our apporach to simulating SDEs on manifolds \cite{armstrongMijatovic}. We plan to investigate the use of jets in rough paths theory \cite{frizhairer}.
 
\section*{Acknowledgments} We are grateful to David K. Elworthy for important feedback, suggestions, 
correspondence and for suggesting links with other approaches such as Schwartz 
morphism and the It\^o bundle. We are indebted to Jeremiah Birrell for many important clarifications and corrections. DB also benefited from correspondence with Michel Emery on the Schwartz morphism approach, and from further correspondence with Giovanni Paolinelli and Giovanni Pistone. This led to Appendix B, which will be expanded in our future work. 

\newpage

\section*{Appendices} 

\begin{appendix}
	
\ifarxiv{
\section{Classical formulation}
\label{appendix:classicalFormulation}
This section is a relatively informal introduction to stochastic differential equations (SDEs) in a given coordinate system. The reader who is familiar with SDEs in both It\^o and Stratonovich form may skip this section.

An SDE is typically written as 
\[ dX_t = a(X_t) dt + b(X_t) dW_t , \ \ X_0 .\]
This is a  It\^o stochastic differential equation. Informally, in the one-dimensional case, we can interpret the first coefficient $a(X_t) dt$ as the local mean for $dX_t$ given past and present information up to $t$, and $b(X_t)^2 dt$ as the local variance of $dX_t$ given past and present information.     $X_0$ is the initial condition at time $0$. The input $W$ models random noise that is modelled by Brownian motion, whose formal derivative is a  model for white noise. Brownian motion is a stochastic process with continuous paths and stationary independent increments. $\delta W_t = W_{t+\delta t}-W_t$ is independent of $W_t$ and is normally distributed with zero mean and variance $\delta t$. 

The problem with properly defining the above SDE is that $W$'s paths have unbounded variation, and are nowhere differentiable with probability one.  So the above cannot be interpreted as a pathwise differential equation directly. One then writes it as an integral equation
\begin{equation}\label{integraleq} X_t = X_0 + \int_0^t a(X_s) ds + \int_0^t b(X_s) dW_s  .
\end{equation}
Now the matter is defining the stochastic integral driven by $dW$, ie the last term in the right hand side. 
Since $W$ has unbounded variation, we cannot define this as an ordinary Stiltjes integral on the paths.
Traditionally, two main definitions of stochastic integrals are available, given here as generalizations of Riemann-Stiltjes sums for comparison convenience, with convergence in mean square ($L^2(\mathbb{P})$, where $\Pb$ is the probability measure in the probability space where the SDE is defined). The two choices are whether one takes the initial point or the mid point in defining the Stiltjies sums, leading respectively to: 
\[ \int_0^T b(X_s) dW_s = \lim_{n} \sum_{i=1}^n b(X_{{t_i}}) (W_{t_{i+1}}-W_{t_i})  \ \ \mbox{(It\^o)} \]
% \pause
\[ \hspace{-0.5cm} \int_0^T  b(X_s) \circ dW_s = \lim_{n} \sum_{i=1}^n {b\left(X_{\frac{t_i+t_{i+1}}{2}}\right)}  (W_{t_{i+1}}-W_{t_i}) \ \  \mbox{(Stratonovich)}. \]
The Stratonovich integral has also a more general definition with  \[ [b(X(t_i))+ b(X(t_{i+1}))]/2\]  in front of $(W_{t_{i+1}}-W_{t_i}$). In the above limits it  is understood that as $n$ tends to infinity the mesh size of the partition  \[\{[0,t_1),[t_1,t_2), \ldots, [t_{n-1},t_n=T]\}\] of $[0,T]$ tends to 0. 
% \pause
The Stratonovich integral, also known as Fisk (\cite{fisk})-Stratonovich (\cite{strato}) (-McShane \cite{mcshane})  integral, is said to look into the future, since the integrand is evaluated after the initial time $t_i$ of each increment, whereas the It\^o integrand is evaluated at the initial time of each increment, at $t_i$, and ``does not look into the future". 

The It\^o integral is a martingale, an important type of stochastic process in probability theory. A consequence of the martingale property is that we can indeed interpret the drift term $a(x)dt$ as a local mean for $dX$. This does not hold with the Stratonovich integral, although we will argue that for Stratonovich integrals there is a link between the drift and the median. More generally, the It\^o integral has a number of good probabilistic properties. Its main problem is that it does not satisfy the chain rule. Under a change of variables driven by a transformation $f$ from $\R^n$ to $\R$, the SDE changes according to It\^o's lemma, leading to
\[ d f(X_t) = ((\partial_x f) (X_t))^T\ dX_t + \frac{1}{2} (d X_t)^T (\partial^2_{xx} f(X_t)) ( dX_t) \]
where $\partial_x$ and $\partial^2_{xx}$ denote respectively the gradient and Hessian of $f$, and upper $T$ denotes transposition.
A more precise way to write the same equation is by explicitly referring to components, namely
\[ d f(X_t) = ((\partial_i f) (X_t))^T\ dX^i_t +  \frac{1}{2} ^T (\partial^2_{ij} f(X_t)) d[ X^i_t, X^j_t]\]
where $[\cdot,\cdot]$ denotes the quadratic covariation. 
The short hand for the Stratonovich SDE is written through the It\^o's circle ``$\circ$" notation: If we use the Stratonovich integral in \eqref{integraleq}, then the abbreviated notation for the related SDE is
\[ d X_t = a(X_t) dt + b(X_t) \circ dW_t.\]
While lacking the good probabilistic properties of the It\^o integral, and in particular not being a martingale, the Stratonovich integral has an important property: the related SDE satisfies the chain rule under a change of variables, namely
\[ d f(X_t) = ((\partial_x f) (X_t))^T\circ dX_t . \]
Given that the chain rule holds, the coefficients of SDEs in Stratonovich form behave like vector fields under changes of coordinates. This is why the Stratonovich integral is good for coordinate-free SDEs on manifolds and stochastic differential geometry more generally.
One more important advantage of the Stratonovich integral is its convergence under smooth noise. If we take $t$-$C^1$ processes $t \mapsto W^{(n)}(t)$ such that $W^{(n)} \rightarrow W$ with probability 1, uniformly in $t$-bounded intervals, then the following Wong--Zakai type of result holds.

\newpage

\[\mbox{The pathwise regular solution of} \ dX^{(n)}_t = a(X^{(n)}_t) dt + b(X^{(n)}_t) dW^{(n)}_t \]
\[ \mbox{converges to the solution of} \ \  
dX_t = a(X_t) dt + b(X_t)\circ dW_t \]
(see \cite{oksendal} for a relatively informal account).
So if we approximate the rough noise $W$ with more regular noise converging to $W$ and take the limit, we obtain the Stratonovich-integral based solution and not the It\^o one. 

Summing up, we could say that It\^o SDEs are good probabilistically, while Stratonovich SDEs are good geometrically.
In this paper we ask ourselves: can we make the It\^o-integral based SDEs coordinate free and good for geometry, so as to have good  probabilistic and geometric properties together? One of the aims of this paper is to show that the answer is affirmative, and that the notion of jet plays a key role in the answer. In closing this appendix, we would like to shortly mention the It\^o - Stratonovich transformation. 
Indeed, under assumptions guaranteeing that both equations have solutions that are regular enough, there is a simple rule to re-write an It\^o SDE into a Stratonovich SDE admitting the same solution (and vice versa).
This is known as It\^o - Stratonovich transformation. We will give a coordinate-free interpretation of this transformation later on in the paper. The transformation works as follows:
\[ d X_t = a(X_t) dt + b_i(X_t) dW^i_t \ \ \rightarrow \ \ d X_t = \tilde{a}(X_t)dt  + b_i(X_t)\circ dW^i_t\]
\[ \tilde{a}_i = a_i - \frac{1}{2} \sum_{k=1}^{\dim W} \sum_{h=1}^{\dim X} \frac{\partial b_{i,k}}{\partial x_k} b_{h,k} \]
where in the $dX$ equations we are using Einstein summation convention. The two equations above have the same solution. 
So if we need geometry and Ito, why not do the following:
\begin{itemize}
\item Given the initial Ito SDE, transform it in Stratonovich form with the rule above.
The solution will be the same.
\item Work (with geometry) using the Stratonovich SDE.
\item Once you are done, convert the equation back into Ito form.
\end{itemize} 
The ``work with geometry" at the moment is a little vague. Even so, we may already point out that the It\^o Stratonovich transformation does not commute with  operations related to methods for projection on submanifolds.   See for example the exponential-families assumed-density-filters (equivalent to Hellinger metric projection for Stratonovich SDEs) in stochastic filtering \cite{brigobernoulli}, or more importantly the optimal projection of SDEs on submanifolds \cite{armstrongBrigoOptimalProjectionArxiv}, where it is shown that the jet formulation of It\^o SDEs introduced in this paper leads to a projection that has better optimality properties than the straightforward projection of the Stratonovich SDE. In that case working directly with the It\^o version via jets or switching to Stratonovich and projecting the Stratonovich version yield different results.
%In these cases
%\begin{itemize}
%\item Transform  full It\^o SPDE for optimal filter in Stratonovich form;
%\item Apply the assumed density/``expected sufficient statistics matching" approximation to obtain a finite dimensional approximating Stratonovich SDE in the chosen exponential family;
%\item Convert the approximating  Stratonovich SDE into It\^o form; 
%\end{itemize}
%produces a different  It\^o SDE with respect to the direct method:
%% \pause
%\begin{itemize}
%\item 
%Apply  assumed density approximation directly to the It\^o full SPDE.
%\end{itemize}
%
%Along similar lines, we point out that many important notions in probability theory, starting from the expectation, are not coordinate free, in that they are not covariant. 

It is therefore important to make a choice about the stochastic integral to be employed before facing particular problems.

\section{Equivalent formulations}
\label{appendix:equivalentFormulations}
In this section we briefly connect earlier approaches to SDEs on manifolds with our jets representation. We start from Schwartz morphism, the approach originated by Schwartz and Meyer, see for example Emery \cite{emery,emeryhal}. We will give a rather informal summary of this approach. This approach is based on second order tangent vectors \cite{emery}, also known as diffusors \cite{emeryhal}. These are defined as second order differential operators without constant term. These are applied to real functions defined on the manifold.

Before describing diffusors a little more in detail and connecting them with 2-jets, let's recall the definition of tangent vectors at a point $p$ of a manifold $M$. A possible definition of tangent vector is a first order linear differential operator with no constant term acting on real functions defined on the manifold $M$. So for example $L_p$ is an operator such that, for $f: M \rightarrow \R$, $L_p f$ is a real number. If $\gamma$ is a curve on the manifold with $\gamma(0)=p$, then $L^\gamma_p f := (f\circ \gamma)'(0)$ is a differential operator tangent vector associated to $\gamma$. One can prove that all $L_p$ are like this, i.e. they all come from the velocity of some curve $\gamma$. 

We can compute the expression for the tangent vector in a coordinate system $x$, where we can express both functions $f$ and curves $\gamma$ on the manifold via their coordinate versions $\tilde{f}$ and $\tilde{\gamma}$, see Fig. \ref{fig:charts}. We obtain
\[ L^{\gamma}_p f = (f \circ \gamma)'(0) = (\tilde{f} \circ {\tilde{\gamma}})'(0) = 
\frac{\partial \tilde{f}}{\partial \tilde x^k}(\tilde{\gamma}(0)) \tilde{\gamma}'_k(0) = L^k \frac{\partial \tilde{f}}{\partial x^k}(x(p))\]
from which we have $L^k$ in coordinates for tangent vectors.

\bigskip

\begin{figure}[h!]
\begin{center}
\includegraphics{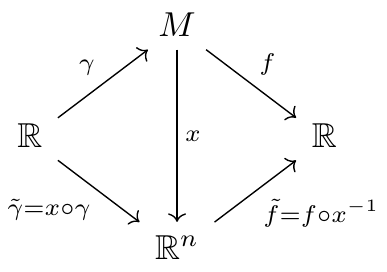}
\end{center}
\caption{Coordinate versions of functions and curves on the manifold}\label{fig:charts}
\end{figure}

The space of all tangent vectors at $p$ to $M$ is denoted $T_pM$. The tangent bundle $TM$ is the union of all tangent spaces for $p$ ranging the manifold, with the unique topology and smooth structure that make the trivializations diffeomorphisms. 
A tangent vector field is a smooth function $V: M \rightarrow TM$, or a section of $TM$ such that for $p \in M$ then $V(p) \in T_pM$. 
%A co-tangent vector (briefly co-vector) or differential form of degree 1 at $p$ is an element of the dual space of the tangent space,
%$T^\ast_p M$, thus it is a linear operator that given a tangent vector at $p$ gives back a number. 
%Co-vectors at $p$ are typically denoted $d f(p)$, since they are associated with a function $f$ on the manifold, and are such that $df(p)(\tilde{\gamma}) = df(p)(L^\gamma_p) =(L^\gamma_p) f = (f \circ \gamma)'(0)$.
%
%The co-vectors associated to $\partial / \partial x^i|_p = \tilde{\gamma}_i(p)$ are denoted by $d x^i(p)$. The $d x^i(p)$ form a basis of $T^\ast_p M$. 
%
%The set $T^\ast M = \cup_p T^\ast_p M$ is the co-tangent bundle. 
%A co-tangent vector field (briefly co-vector field or just form) is a smooth map $\alpha: M \rightarrow T^\ast M$ such that $\alpha(p) \in T^\ast_p M$.  Note that even if co-vectors at $p$ or forms at $p$ can be written all as $df(p)$, not all forms or co-vector fields can be written as $df$. 

We define a second order tangent vector or diffusor at  $p\in M$ as a second order linear differential operator with no constant term, transforming smooth real functions on $M$ in numbers.
In coordinates $x$, one has 
\begin{equation}\label{eq:diffusorL} L_p f = L^{ij} \frac{\partial^2 f}{\partial x^i \partial x^j}\bigg{|}_p + L^k \frac{\partial f}{\partial x^k}\bigg{|}_p . 
\end{equation}
First notice that when the real numbers $L^{ij}=0$ then the diffusor at $p$ is a tangent vector at $p$. So tangent vectors are special cases of diffusors.

Due to the commutative property 
\[ \frac{\partial^2 f}{\partial x^i \partial x^j}\bigg{|}_p = \frac{\partial^2 f}{\partial x^j \partial x^i}\bigg{|}_p , \ \ i \neq j\]
we have that a basis for diffusors at $p$ is for example
\begin{equation}\label{eq:diffusorsbasis} \left\{ \frac{\partial^2 }{\partial x^i \partial x^j}\bigg{|}_p, \ \frac{\partial }{\partial x^k}\bigg{|}_p, \ \ i,j,k=1,\ldots,n, \ \ \underline{i\le j} \right\} .
\end{equation}
Diffusors are related to acceleration of curves. 
We can compute the second order operator generated by the acceleration of a curve like this:
\[ L^{\gamma}_p f = (f \circ \gamma)''(0) = (\tilde{f} \circ {\tilde{\gamma}})''(0). \]
In coordinates this reads
\begin{equation}\label{eq:diffusor} L^{\gamma}_p f = (\tilde{f} \circ \tilde{\gamma})''(0) =  \frac{\partial^2 \tilde{f}}{\partial \tilde x^k \partial \tilde x^h}(\tilde{\gamma}(0)) \tilde{\gamma}'_k(0)\tilde{\gamma}'_h(0) + \frac{\partial \tilde{f}}{\partial \tilde x^k}(\tilde{\gamma}(0)) \tilde{\gamma}''_k(0).  
\end{equation}
In the basis \eqref{eq:diffusorsbasis}, the diffusor \eqref{eq:diffusorL} would present us with the cross terms $L^{ij}+L^{ji}$ when $i<j$. We need to decide how to split the sum into the two components. The most natural assumption is to assume symmetry and take $L^{ij}=L^{ji}$. We get then the following coordinate expression for the acceleration diffusor:
\[ L^{ij} = \tilde{\gamma}'_i(0)\tilde{\gamma}'_j(0), \ \ L^k = \tilde{\gamma}''_k(0), \ \ i\le j, \ i,j,k=1,\ldots,n.\]
We notice that the rank of the symmetric matrix $L^{ij}$ is
one for the diffusor associated with the acceleration of a curve. It follows
that diffusors with higher rank $L^{ij}$ cannot be represented directly as
pure-accelerations diffusors, but can be represented as a linear combination
of pure-acceleration diffusors.
The set of all diffusors at $p$ in $M$ is the second order tangent space, and is denoted by $\tau_p M$ \cite{emery} or $\Ti_p M$ \cite{emeryhal}.

The correspondence we have described in this paper between the two jet of a curve and the associated differential operator, namely \begin{equation*}
{\cal L}_{\gamma_x}(f):=\frac{1}{2}(f \circ \gamma_x)^{\prime \prime}(0)
\end{equation*}
is precisely the same as the above correspondence between the acceleration of a curve and a diffusor described by Emery. Our
contribution is to give a simple and intuitive interpretation of each curve
in terms of a numerical scheme, to extend the interpretation to diffusors
where $L^{ij}$ has rank greater than one and to provide an interpretation of Stratonovich calculus in terms of curves.

%
%We can compare the action of a pure-acceleration diffusor to a 2-jet. Consider the two jet of the function $t \mapsto (\tilde{f}\circ \tilde{\gamma})(t)$ seen as a function in $\R^n$.
%The second order Taylor expansion of $(\tilde{f}\circ \tilde{\gamma})(t)$ around $t=0$ reads
%\[ f(p) + \frac{\partial \tilde{f}}{\partial \tilde x^k}(\tilde{\gamma}(0)) \tilde{\gamma}'_k(0)\ t + \left[ \frac{\partial^2 \tilde{f}}{\partial \tilde x^k \partial \tilde x^h}(\tilde{\gamma}(0)) \tilde{\gamma}'_k(0)\tilde{\gamma}'_h(0) + \frac{\partial \tilde{f}}{\partial \tilde x^k}(\tilde{\gamma}(0)) \tilde{\gamma}''_k(0)\right] \ \frac{t^2}{2}.\] 
%We see that in this case the symbol of the two jet, namely the term between squared brackets, is precisely the diffusor in  \cref{eq:diffusor}. The first order part of the jet is the tangent vector. However the jet includes both the first and second order parts acting in their velocity and acceleration capacities, thus allowing us to see both of them. We know that the symbol of a two jet, which we have seen to coincide with the diffusor, is intrinsic, and so is the diffusor. However, the split of the diffusor or symbol in first order $\partial / \partial x^k$ and second order $\partial^2 / \partial x^k \partial x^h$ parts is not intrinsic. In fact one needs a connection to be able to define the first order part of a diffusor in an intrinsic way (see Emery \cite{emery}, p. 105).   

We now come to Schwartz morphism. Schwartz considers the unusual way to write the It\^o formula as an operator applied to a test function, and a diffusor more precisely. Let $Y_t$ be the solution to an SDE in the manifold $M$ in a coordinate system. Write 
\[ ``d^{Ito} f(Y_t) ='' (DY_t) f =  dY^i \frac{\partial}{\partial y^i} f(Y_t) +\frac{1}{2}  d[Y^i,Y^j] \frac{\partial^2}{\partial y^i \partial y^j} f(Y_t)\]
where $[\cdot,\cdot]$ is the quadratic covariation. 
Abstrating $DY_t$, we may define informally the diffusor 
\[  DY_t =  dY^i \frac{\partial}{\partial y^i} +\frac{1}{2}  d[Y^i,Y^j] \frac{\partial^2}{\partial y^i \partial y^j} . \]
This is indeed a second order differential operator without constant term. The definition is not fully rigorous since $d Y_i$ only makes sense in an integral expression, and indeed a slightly more precise version of Schwartz's principle is that forms of order 2 can be integrated along semimartingales, and that any object that can be integrated against all semimartingales is a second order form, or can be transformed in such a form (\cite{emery}, p. 80). 

We consider the special case of SDEs on a manifold driven by Brownian motion in Euclidean space $\R^d$. In this case we can write 
%
%We will have a similar definition for $DY$ in the manifold $N$, namely 
%\[  DY_t =  dY_i \frac{\partial}{\partial y^i} +\frac{1}{2}  d[Y_i,Y_j] \frac{\partial^2}{\partial y^i \partial y^j}. \]
%
%
a second order SDE as $D Y_t = F(Y_t) D W_t$, where $F(Y_t)$ is a smooth linear map that maps diffusors in the Euclidean space where $W$ evolves to diffusors on $M$. In coordinates, 
\[ dY^i \frac{\partial}{\partial y^i} +\frac{1}{2}  d[Y^i,Y^j] \frac{\partial^2}{\partial y^i \partial y^j} = F(Y_t)  \left( dW^i \frac{\partial}{\partial x^i} +\frac{1}{2}  d[W^i,W^j] \frac{\partial^2}{\partial x^i \partial x^j}\right) \]
where $x$ are orthogonal coordinates in the Euclidean space for $W$ and $y$ are local coordinates on the manifold. 
Since $F$ is linear
\[ dY^i \frac{\partial}{\partial y^i} +\frac{1}{2}  d[Y^i,Y^j] \frac{\partial^2}{\partial y^i \partial y^j} =  dW^i F(Y_t)\  \left( \frac{\partial}{\partial x^i}\right) +\frac{1}{2} d[W^i,W^j]\  F(Y_t)\left(  \frac{\partial^2}{\partial x^i \partial x^j}\right) \]
We can specify the way $F(Y)$ acts on $\partial / \partial x^i$ and 
$\partial^2 / \partial x^i \partial x^j$ in the given coordinate system at the given point $Y$. We will omit the specific point in part of the notation. 
Define
\[ F(Y_t)\  \left( \frac{\partial}{\partial x^i}\right) =F_i^{\alpha} \frac{\partial}{\partial y^\alpha }+  F_i^{\alpha,\beta} \frac{\partial^2}{\partial y^\alpha \partial y^\beta}\] 
 \[ F(Y_t)\  \left( \frac{\partial^2}{\partial x^i \partial x^j}\right) =F_{ij}^{\alpha} \frac{\partial}{\partial y^\alpha }+  F_{ij}^{\alpha,\beta} \frac{\partial^2}{\partial y^\alpha \partial y^\beta}.\] 
 Substituting and matching coefficients we obtain
% \[ dY_i \frac{\partial}{\partial y^i} +\frac{1}{2}  d[Y_i,Y_j] \frac{\partial^2}{\partial y^i \partial y^j} =  dX_i \left( F^i_{\alpha} \frac{\partial}{\partial y^\alpha }+  F^i_{\alpha,\beta} \frac{\partial^2}{\partial y^\alpha \partial y^\beta}\right) +\frac{1}{2} d[X_i,X_j]\left( F^{ij}_{\alpha} \frac{\partial}{\partial y^\alpha }+  F^{ij}_{\alpha,\beta} \frac{\partial^2}{\partial y^\alpha \partial y^\beta}\right) \]
%which, matching coefficients, leads to
\begin{eqnarray} \label{eq:quadraticcov1a}  && dY^i = dW^k F_k^i + \frac{1}{2} d[W^h,W^k] F_{hk}^i  \\  
\label{eq:quadraticcov1} &&   d[Y^i,Y^j] =2 F_k^{ij} dW^k + F_{hk}^{ij} d [W^h,W^k]. 
\end{eqnarray}
 The problem with this system is that it is partly redundant. The rules of stochastic calculus allow us to compute the quadratic covariation $d[Y^i,Y^j]$ directly from the $d Y^k$ equations, i.e.\ from the first of the last two equations we have that
 \begin{equation}\label{eq:quadraticcov} d[Y^i,Y^j] = F^i_h F^j_k d[X^h,X^k] 
\end{equation}
 so that we need to impose consistency between \eqref{eq:quadraticcov1} and   \eqref{eq:quadraticcov}: 
 \[ \mbox{(Schwartz morphism)} \ \ \  F_k^{ij} = 0, \ \ \ F_{hk}^{ij} = \frac{1}{2} [ F^i_h F^j_k  + F^j_h F^i_k]. \]
Any map $F$ satisfying this last equation above is called a ``Schwartz Morphism''. One can check that this is an intrinsic definition. 

Hence SDEs for $Y$ on manifolds $M$ driven by $W$ in Euclidean space can be written as a {\emph{subset}} of all smooth linear maps from $\R^d$ to $\Ti_M$, the subset being given by Schwartz morphisms. 

%One might have been tempted make the quadratic covariation explicit from the start, in coordinates, as
%\[ d [W^i, W^j] = dW^i d W^j = g_E^{ij} dt\]
%(Kronecker delta). However, this would be misleading. See the discussion in \cite{emery}, 6.21, p. 79. See also the discussion in this paper above \cref{sdeEinstein}. \rosso{(I don't understand
%this paragraph, the first sentence is ungrammatical. But perhaps it is unnecessary anyway?) }
 
Compute now the jets representation in coordinates. 
Go back to our notation
\[  P_t \jet j_2(\gamma_{P_t})(dW_t) \]
and let $\tilde{\gamma}$ denote also the same curve in $\R^n$ obtained through a chart with coordinates $y$. To contain notation we omit the tilde symbol on the curve, but it is understood we are working in coordinates. In coordinates $y$ we have that the 2-jet representation of the SDE reads
%, see Section \ref{sec:sdesmultiplebrownians},
\begin{equation}\label{eq:2jetdY}  d Y^i = \partial_{k}{\gamma^i_{Y_t}}(0) d W^k + \frac{1}{2}\partial^2_{k, h} \gamma^i_{Y_t}(0) [dW^k, dW^h]. 
\end{equation}

Comparing this last equation to eq. \eqref{eq:quadraticcov1a} we deduce that the two are equivalent, in the given coordinate sytem, if
%\[\partial_{k}{\gamma^i_{Y_t}}(0) = F^i_k, \ \ \partial^2_{h, k} \gamma^i_{Y_t}(0) = F^i_{h,k} .\]
%In fact the second condition is more than is needed. All one would need for the two equations to coincide is \rosso{Delete this sentence and the equation above to just give the correct formula?}
\[\partial_{k}{\gamma^i_{Y_t}}(0) = F^i_k, \ \ \partial^2_{k, k} \gamma^i_{Y_t}(0) = F^i_{k,k} .\]
The 2-jet and the diffusor/Schwartz morphism approaches are both intrinsic. This shows how
the Schwartz morphism can be computed from the $2$-jet. The Schwarz morphism has the advantage
that there is a unique Schwartz morphism associated with an SDE. The $2$-jet approach has the
advantage of using only familiar differential geometric concepts which are easy to interpret.

%Interestingly, we can view the map $v \mapsto j_2(\gamma_{P_t})(v)$ as a map from $\R^d$ to 2-jets of $\gamma$. 

\bigskip

A different approach to geometry of It\^o SDEs is the It\^o bundle approach (\cite{belopolskaja}, see also \cite{gliklikh} and the appendix in \cite{brzezniak}). If one wished to base our $2$-jet SDE construction on the It\^o bundle approach, following the notation in \cite{gliklikh}, one might proceed as follows. Having in mind \eqref{notEinstein}, rewritten informally as
\[ X_{t+dt} = X_t + B dW_t + \beta(dW_t,dW_t),\]
one could consider the map
\begin{equation}\label{eq:mapbundles} x +B z+\beta(z,z) \mapsto   \left(\frac{1}{2} \mbox{Tr}( \beta), B\right) 
\end{equation}
from sections of the 2-jet bundle  to sections of the It\^o bundle and argue that this is well defined, leading to a well defined SDE in the It\^o bundle sense. 

We will explore mappings between such bundles and the related SDEs equivalence more rigorously in future work. 
% end arxiv only
}{}

\section{Proof of convergence to classical It\^o calculus}
\label{proofsSection}

In this appendix we prove Theorem \ref{th:convergence}, stating convergence in $L^2(\Pb)$ of the 2-jet scheme to the classical It\^o solution of the SDE. 
The
techniques used to prove almost-sure convergence of the classical Euler scheme (\cite{gyongyas} for example) could be adapted to the $2$-jet scheme.

\begin{proof}
We think of $T$ as fixed and as $N$ increases ${\cal T}^N$ provides a finer discretization grid
approximating $[0,T]$.

To remove clutter from our equations, during this proof we will adopt the following conventions. $C$ is a constant, independent of $N$ that may change from line to line. We drop the superscript $N$ from terms such as $X_t^N$. Sums over Greek indices always range from $1$ to $d$. $i$, $j$ and $k$ are always
integers. Terms with integer time subscripts such as $X_i$ are shorthand for $X_{i \delta t}$. Superscript $T$'s indicate the vector transpose rather than the terminal time.

Under our hypotheses, we know from \cite{kloedenbook}, Theorem 10.2.2 p.\ 342 that the Euler scheme
\[
\bx_{t+\delta t} =  \bx_t + a( \bx_t) \delta t + \sum_{\alpha} b_\alpha( \bx_t) (W^\alpha_{t+\delta t} - W^\alpha_t) \ \mbox{for}\  \ t \in {\cal T}^N
\]
converges to the solution of the It\^o SDE in that
\begin{equation*}
\max_{t \in {\cal T}^N} E\left\{ |\bx_t - \tx_t|^2 \right\} \leq C\ \delta t \  .
\end{equation*}

Since
\[
\begin{split}
\max_{t \in {\cal T}^N} E\left\{ |X_t - \tx_t|^2 \right\} 
&\leq \max_{t \in {\cal T}^N} E\left\{ 2 |X_t - \bx_t|^2 + 2 |\bx_t - \tx_t|^2 \right\} \\
&\leq 2 \max_{t \in {\cal T}^N} E\left\{ |X_t - \bx_t|^2\right\} + 2\max_{t \in {\cal T}^N} E\left\{ |\bx_t - \tx_t|^2 \right\} \\
\end{split}
\]
all we need to conclude with \eqref{sup-msconv} for time points in ${\cal T}^N$ is to show
\[
\max_{t \in {\cal T}^N} E\left\{ |X_t - \bx_t|^2\right\} \leq C\ \delta t.
\]

Summing the differences of consecutive terms in the Euler scheme with time step $\delta t$ we have
\[
\bx_{k} = x_0 + \sum_{i=1}^k \left[ a( \bx_{i-1} ) \delta t +
\sum_\alpha b_\alpha( \bx_{i-1} ) \delta W^\alpha_{i-1} \right]
\]
where $\delta W_k:=W_{(k+1) \delta t}-W_{k \delta t}$. Using the definition for $a$ we can write
this as
\begin{equation}
\bx_{k} = x_0 + \sum_{i=1}^k \left[
 \sum_{\alpha,\beta} \tilde{a}_{\alpha,\beta}(\bx_{i-1}) g_E^{\alpha \beta} \delta t +
\sum_\alpha b_\alpha( \bx_{i-1} ) \delta W^\alpha_{i-1} \right]
\label{eulerSchemeAsSum}
\end{equation}
where $g_E$ is the metric tensor of the Euclidean metric and where we define
\[
\tilde{a}_{\alpha,\beta}(x) := \frac{1}{2} \frac{\partial^2 \gamma_x}{\partial u^\alpha \partial u^\beta} \Bigr|_{u=0}, \ \ \ \mbox{so that} \ \ \  a = \sum_{\alpha,\beta} \tilde{a}_{\alpha,\beta} g^{\alpha,\beta} .
\]
Note that we are not using the Einstein summation convention in this proof.
Let us write the $2$-jet scheme via its Taylor expansion as in \eqref{continuousTimeEquationW2withR}, namely

\begin{equation}
 X_{k} = X_{k-1}
 + \sum_{\alpha,\beta}\tilde{a}_{\alpha, \beta}(X_{k-1}) \delta W^\alpha_{k-1} \delta W^\beta_{k-1}+ \sum_{\alpha} b_\alpha({X_{k-1}}) \delta W^\alpha_{k-1}
 +  R_k.
\label{continuousTimeEquationW2withRproof}
\end{equation}
This expression defines the remainder $R_k$. Note that the remainder depends also on $N$ but our notation has suppressed this. Summing the differences of consecutive terms, and substituting in the definition of $b$ we obtain
\begin{equation}
X_{k} = x_0 + \sum_{i=1}^k \left[ 
 \sum_{\alpha,\beta} {\tilde{a}}_{\alpha,\beta}( X_{i-1} ) \delta W^\alpha_{i-1} \delta W^\beta_{i-1}
 + \sum_{\alpha} b_\alpha(X_{i-1}) \delta W^\alpha_{i-1}
 +  R_i
\right].
\label{curvedSchemeAsSum}
\end{equation}
Subtracting equation \eqref{eulerSchemeAsSum} from \eqref{curvedSchemeAsSum} we obtain
\[
X_k - \bx_k = P_k + \sum_\alpha Q^\alpha_k + \sum_{\alpha,\beta}S^{\alpha,\beta}_k 
\]
where we define
\[
P_k := \sum_{i=1}^k R_i, \qquad
Q^\alpha_k := \sum_{i=1}^k \left[ b(X_{i-1}) - b(\bx_{i-1}) \right] \delta W^\alpha_{i-1}
\]
\[
\begin{split}
\text{and }
S^{\alpha,\beta}_k &:= \sum_{i=1}^k \left[ 
{\tilde{a}}_{\alpha,\beta}(X_{i-1})
\delta W^\alpha_{i-1} \delta W^\beta_{i-1}
-
{\tilde{a}}_{\alpha,\beta}(\bx_{i-1})
g^{\alpha \beta}_E \delta t
\right].
\end{split}
\]
We have that
\[
E\left\{ | X_k - \bx_k|^2 \right\}
\leq C\left(
E\left\{ | P_k |^2 \right\} + \sum_\alpha E \left\{ | Q^\alpha_k|^2 \right\} + \sum_{\alpha,\beta} E \left\{ | S^{\alpha,\beta}_k  |^2 \right\}
\right) .
\]
We now wish to obtain bounds for each expectation on the right in terms of $\delta t$ and the function.
\[
Z(t) = \max_{0\leq s \leq t, s \in {\cal T}^N} E \left\{ |X_s - \bx_s|^2 \right\}.
\]

Using our bound on the third derivatives of $\gamma$, we can bound the remainder terms $R_i$
as follows
\[
|R_i| \leq C \Big( \sum_{\alpha} | \delta W^\alpha_{i-1} | \Big)^3.
\]
Writing $M:= E \Big\{ \Big(\sum_{\alpha} |\delta W^\alpha_{i-1}|\Big)^6 \Big\}$, we calculate that
\[
\begin{split}
E(|P_k|^2) = E \{ |\sum_i R_i|^2 \} &\leq C N^2  M \leq C \left(\frac{T}{\delta t}\right)^2 (\delta t)^\frac{6}{2} \leq C \delta t.
\end{split}
\]

By the discrete It\^o isometry and the Lipschitz condition on the derivatives we find 
\begin{equation*}
\begin{split}
E\Big\{ |Q^\alpha_k|^2 \Big\}
&= E\Big\{ \Big| \sum_{i=1}^k [ b_{\alpha}(X_{i-1}) - b_{\alpha}(\bx_{i-1}) ] \delta W^\alpha_{i-1} \Big|^2 \Big\} = E\Big\{ \Big| \sum_{i=1}^k [ b_{\alpha}(X_{i-1}) - b_{\alpha}(\bx_{i-1}) ]^2 \delta t  \Big| \Big\} \\
&\leq C E\Big\{ \Big| \sum_{i=1}^k | X_{i-1} - \bx_{i-1} |  \Big|^2  \delta t \Big\} 
\leq C \int_0^{k \delta t} Z(s) \, \ed s.
\end{split}
\end{equation*}

To bound $S^{\alpha, \beta}$ we write it as a sum of two components $S^{\alpha,\beta,1}$ and $S^{\alpha,\beta,2}$ defined as follows.
\[
\begin{split}
S^{\alpha,\beta,1}_k &:= \sum_{i=1}^k 
{\tilde{a}}_{\alpha,\beta}(\bx_{i-1}) (\delta W^\alpha_{i-1} \delta W^\beta_{i-1} - g_E^{\alpha,\beta} \delta t ) \\
S^{\alpha,\beta,2}_k &:= \sum_{i=1}^k [{\tilde{a}}_{\alpha,\beta}(X_{i-1}) - {\tilde{a}}_{\alpha,\beta}(\bx_{i-1})]
\delta W^\alpha_{i-1} \delta W^\beta_{i-1} \\
\end{split}
\]
By expanding the square term using the definition of $S^{\alpha,\beta,1}$ we write $E\left\{ |S^{\alpha,\beta,1}_k|^2 \right\}$ as
\begin{equation*}
\sum_{i=1}^k \sum_{j=1}^k E \left\{
{\tilde{a}}_{\alpha,\beta}^T(\bx_{i-1}) {\tilde{a}}_{\alpha,\beta}(\bx_{j-1})
( \delta W^\alpha_{i-1} \delta W^\beta_{i-1} - g^{\alpha,\beta}_E \delta t )
( \delta W^\alpha_{j-1} \delta W^\beta_{j-1} - g^{\alpha,\beta}_E \delta t )
\right\}.
\end{equation*}
When $i\neq j$ the terms on the right hand side vanish. This is because we may assume
WLOG that $j>i$ in which case the last factor of the $(i,j)$-th term,
\[
\delta W^\alpha_{j-1} \delta W^\beta_{j-1} - g^{\alpha,\beta}_E \delta t
\]
is independent of the rest of the term and has expectation $0$. We now quote 10.2.14 p.\ 343 in
\cite{kloedenbook} to show
\begin{equation}
E\left\{ |\bx_{k}|^2 \right\} \leq C.
\label{l2Bound}
\end{equation}
We now compute
\begin{equation*}
\begin{split}
E\left\{ |S^{\alpha,\beta,1}_k|^2 \right\} &=
\sum_{i=1}^k E \left\{
|{\tilde{a}}_{\alpha,\beta}(\bx_{i-1})|^2
( \delta W^\alpha_{i-1} \delta W^\beta_{i-1} - g^{\alpha,\beta}_E \delta t )^2
\right\} \\
&\leq \sum_{i=1}^k E \left\{ | {\tilde{a}}_{\alpha,\beta}(\bx_{i-1})|^2  \right\}
E\left\{ (\delta W^\alpha_{i-1} \delta W^\beta_{i-1} - g^{\alpha,\beta}_E \delta t)^2 \right\} \\
&\leq \sum_{i=1}^k C (\delta t)^2 E \left\{ | {\tilde{a}}_{\alpha,\beta}(\bx_{i-1})|^2  \right\} \\
&\leq \sum_{i=1}^k C (\delta t)^2 E \left\{ (1 + \bx_{i-1}^2) \right\}
\leq \sum_{i=1}^k C (\delta t)^2
\leq C \delta t.
\end{split}
\end{equation*}
The second line follows from independence of Brownian motion increments. The third from the second by the scaling properties of Brownian motion increments. The fourth from our growth bounds on the second derivatives. We then use the bound \eqref{l2Bound}.  
Let us write $\Delta_{\alpha,\beta,i}:=\tilde{a}_{\alpha,\beta}(X_i) - \tilde{a}_{\alpha,\beta}(\bx_i)$.
We find
\begin{equation*}
\begin{split}
& E \left \{ | S^{\alpha,\beta,2}_k |^2 \right\} \\
&= 2 \sum_{i=0}^{k-1} \sum_{j=0}^i E \left\{ \Delta_{\alpha,\beta,i}^T
\Delta_{\alpha,\beta,j}
 \delta W^\alpha_i \delta W^\beta_i
\delta W^\alpha_j \delta W^\beta_j  \right \} \\
&= 2 \sum_{i=0}^{k-1} \left( E \left\{ |\Delta_{\alpha,\beta,i}|^2 \right\} E\left\{ (\delta W^\alpha_i \delta W^\beta_i)^2 \right\} + \sum_{j=0}^{i-1} E \left\{  \Delta_{\alpha,\beta,i}^T
\Delta_{\alpha,\beta,j}
\delta W^\alpha_j \delta W^\beta_j \right\} E\left\{ \delta W^\alpha_i \delta W^\beta_i \right \} 
\right)
\\
&\leq C \sum_{i=0}^{k-1} \left( (\delta t)^2 E \left\{ |\Delta_{\alpha,\beta,i}|^2 \right\} +   \delta t \sum_{j=0}^{i-1} E \left\{  \Delta_{\alpha,\beta,i}^T
\Delta_{\alpha,\beta,j}
\delta W^\alpha_j \delta W^\beta_j \right\} \delta_{\alpha,\beta}
\right)
\\
&\leq C \sum_{i=0}^{k-1} \left( (\delta t)^2 E \left\{ |\Delta_{\alpha,\beta,i}|^2 \right\} + \delta t \sum_{j=0}^{i-1} E \left\{| \Delta_{\alpha,\beta,i}^T|^2 \right\}^\frac{1}{2} E\left\{ |\Delta_{\alpha,\beta,j}^T
\delta W^\alpha_j \delta W^\beta_j |^2 \right\}^\frac{1}{2} \right) \\
&\leq C \sum_{i=0}^{k-1} \left( (\delta t)^2 E \left\{ |\Delta_{\alpha,\beta,i}|^2 \right\} +
\delta t \sum_{j=0}^i E \left\{| \Delta_{\alpha,\beta,i}|^2 \right\}^\frac{1}{2} E\left\{ |\Delta_{\alpha,\beta,j}|^2  \right\}^\frac{1}{2} E \left\{
|\delta W^\alpha_j \delta W^\beta_j |^2 \right\}^\frac{1}{2} \right) \\
&\leq C (\delta t)^2 \sum_{i=0}^{k-1} \sum_{j=0}^i
\left(
E \left\{ |\Delta_{\alpha,\beta,i}|^2 \right \} 
+ E \left\{ |\Delta_{\alpha,\beta,j}|^2 \right \}
\right).  \\
\end{split}
\end{equation*}
We have used: independence; the scaling properties of Brownian increments; $E\left\{ \delta W^\alpha_i \delta W^\beta_i \right \} = \delta_{\alpha,\beta} \delta t$ (Kronecker delta); Cauchy-Schwarz; independence; the scaling of Brownian increments and Young's Inequality.
Applying now
the Lipschitz property of ${\tilde{a}}_{\alpha,\beta}$ we find
\begin{equation*}
\begin{split}
E \left \{ | S^{\alpha,\beta,2}_k |^2 \right\}
&\leq C (\delta t)^2 \sum_{i=0}^{k-1} \sum_{j=0}^i
\left(
E \left\{ |X_i - \bx_i|^2 \right \}
+ E \left\{ |X_j - \bx_j|^2 \right \}
\right)   \\
&\leq C \delta t \sum_{i=0}^{k-1} Z( i \delta t) \leq C \int_0^{k \delta t} Z(t) \ dt
\end{split}
\end{equation*}
where we took into account the fact tha
$i \le N = T/\delta t$, so that $i (\delta t)^2 \le C \delta t$.
Putting together all our bounds we have
\[ 
Z(t) \leq C\left( \delta t + \int_0^t Z(s) \, \ed s  \right).
\]
So by the Gronwall inequality, $Z(t) \leq C \delta t$.

We have established that
\begin{equation}
\label{discreteTimeEstimate}
\max_{t \in {\cal T}^N} E\left\{ |X^{N}_t - \tx_t|^2 \right\} \le C\ \delta t.
\end{equation}
To complete the proof of \eqref{sup-msconv}, let $t \in [0,T]$ be a time point not necessarily in the grid. Using Theorem 4.5.4 of \cite{kloedenbook} we can obtain a bound $E(|\tilde{X}_{t_i}|^2)\leq C$ and hence a bound
$E(|X_{t_i}|^2) \leq C$.
Applying Theorem 4.5.4 of \cite{kloedenbook} a second time we have
\[
E(|\tilde{X}_{t}-\tilde{X}_{t_i}|^2) \leq C \left(1 + E(|\tilde{X}_{t_i}|^2)\right) \delta t \leq C \delta t.
\]
By the definition of our scheme, our estimates on the derivatives of $\gamma$ and Taylor's theorem
\[
E(|X_{t}-X_{t_i}|^2) \leq C \left( 1 + E(|X_{t_i}|^2)\right) \delta t \leq C \delta t.
\]
The inequality \eqref{sup-msconv} now follows.

\end{proof}

\section{A coordinate free notion of convergence}
\label{coordinateFreeConvergence}

The notion of mean square convergence depends upon one's choice of coordinates and so is not the best notion
of convergence to use in differential geometry. Let us briefly describe an alternative form of convergence which can be used instead.
\ifarxiv{}{See Appendix D of \cite{arxivVersion} for full details.}

Let $M$ be a manifold and $g$ be a Riemannian metric on $M$. Let $K$ be a compact subset $M$. Let $d^g$ denote the Riemannian distance function. Let $K^0$ denote the interior of $K$. We define an equivalence relation $\sim$ 
on $M$ by $x \sim y$ if either $x=y$ or both $x \notin K^0$ and $y \notin K^0$.
The quotient space $M/\sim$ is simply the one-point compactification of $K^0$. We write $\infty$ for the equivalence class consisting of all points outside $K^0$. 

We may define a semimetric $\tilde{d}^{g,K}$ on $M/\sim$ by
\[
\tilde{d}^{g,K}([x],[y])=\underset{X\sim x,Y \sim y}{\inf}d^g(X,Y).
\]
This is not a metric since $\tilde{d}^{g,K}$ does not obey the triangle inequality. Nevertheless, convergence of a sequence in $\tilde{d}^{g,K}$ implies convergence in
$M/\sim$\ifarxiv{\ (see Lemma \ref{lemma:quotientMetric}).}{.}

Given a stochastic process $X:[0,T] \to M \cup \{ \infty \}$ and a compact subset $K$ of $M$ we define a new stochastic process $X^K$ by
\[
X^K_t(\omega) = \begin{cases}
X_t(\omega) & \text{if }X_{t^\prime}(\omega) \in K^0 \text{ for all }t^\prime< t \\
\infty & \text{otherwise}.
\end{cases}.
\]

\begin{definition}
Let $X^i$ be a sequence of stochastic processes in $M \cup \{\infty\}$. For a fixed time $t$,
we say that $X^i$ converges to $X$ {\em in mean square on compacts} if for all $\epsilon>0$, compact sets $K \subseteq M$
and Riemannian metrics $g$ on $M$ there exists $N \in \mathbb{N}$ such that if $i \geq N$
\[
E(\tilde{d}^{g,K}((X^i)^K_t(\omega), X^K_t(\omega))^2 ) \leq \epsilon
\]
\end{definition}

\ifarxiv{This definition is stated in terms of all Riemannian metrics on $M$ so it is manifestly coordinate free.}{}

If the coefficients of an SDE are smooth then all the bounds required in the proof of Theorem \ref{th:convergence} will automatically hold over any compact set. As shown in \cite{hsu} any smooth Ito SDE on a manifold has a unique solution in $M \cup \{\infty\}$ if one sets the value of the solution to $\infty$ at the explosion time. Our $2$-jet scheme will converge in mean square on compacts to this solution of the corresponding It\^o SDE\ifarxiv{\ (see Theorem \ref{th:meanSquareOnCompacts}).}{.}

\ifarxiv{
	
Let us fill in the missing details. We need to prove that convergence in $\tilde{d}^{g,K}$ implies convergence in the quotient topology on $M/\sim$. We also need to show how Theorem \ref{th:convergence} implies convergence in mean squared on compacts for smooth SDEs.

Let us begin by considering the convergence on $M/\sim$. We recalling the definition of the quotient of a metric.
\begin{definition}[Quotient of a metric]
Let $(X,d)$ be a metric space and let $\sim$ be an equivalence relation. We define a bilinear map, also denoted $d$, on the quotient by
\[
d([x],[y])=\inf\{ d(p_1,q_1)+d(p_2,q_2)+\ldots +d(p_n,q_n) \}
\]
where the infimum is taken over finite sequences of pairs of points $(p_i,q_i)$ satisifying 
\begin{equation}
[x]=[p_1], \quad  y=[q_n] \quad \text{ and } \quad [q_i]=[p_{i+1}].
\label{eq:chain}
\end{equation}
Here $[x]$ denotes the equivalence class of $x$. $d$ is called the quotient pseudometric as in general $d([x],[y])=0$ may not imply $[x]=[y]$.
\end{definition}

Let us write $d^{g,K}$ for the quotient pseudometric of $d^g$ on $M/\sim$. It is immediate from our definitions that $d^{g,K}([x],[y])\leq \tilde{d}^{g,K}([x],[y])$. So convergence in $\tilde{d}^{g,K}$ implies convergence in $d^{g,K}$. That this then implies convergence in $M/\sim$ follows from the following lemma.

\begin{lemma}
$d^{g,K}$ is a metric that induces the quotient topology on $M/\sim$. Moreover this quotient topology is the equal to the one-point compactification topology on $K^0$.
\label{lemma:quotientMetric}
\end{lemma}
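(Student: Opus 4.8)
The plan is to reduce everything to a single auxiliary function: the distance to the complement of the interior. First I would define, for $x \in M$,
\[
\rho(x) := d^g(x, M \setminus K^0) = \inf_{u \notin K^0} d^g(x,u),
\]
and record its two properties: it is $1$-Lipschitz with respect to $d^g$, hence continuous on $M$; and since $M \setminus K^0$ is closed, $\rho(x) > 0$ if and only if $x \in K^0$. These two facts will carry almost the entire argument.

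The crux is to obtain a closed form for $d^{g,K}$. I would argue that in any chain $(p_i, q_i)$ realizing the quotient pseudometric, the identifications $[q_i] = [p_{i+1}]$ are of only two kinds: a genuine equality $q_i = p_{i+1}$ (the only possibility when $q_i \in K^0$), or a ``free jump'' between two points of $M \setminus K^0$, which is the sole nontrivial identification and costs nothing. Grouping the chain into maximal runs of honest $d^g$-segments separated by free jumps and applying the triangle inequality for $d^g$ to each run, one sees that leaving $K^0$ from $x$ costs at least $\rho(x)$ and returning to $y$ costs at least $\rho(y)$, while never passing through $\infty$ costs at least $d^g(x,y)$; additional passages only increase the total. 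This yields
\[
d^{g,K}([x],[y]) = \min\big(d^g(x,y),\, \rho(x) + \rho(y)\big) \quad (x,y \in K^0), \qquad d^{g,K}([x],\infty) = \rho(x).
\]
From this formula the metric axioms are immediate: symmetry and the triangle inequality hold for any quotient pseudometric, and non-degeneracy follows because $\rho > 0$ on $K^0$, so $d^{g,K}([x],[y]) = 0$ forces $d^g(x,y) = 0$, i.e.\ $x = y$, while $d^{g,K}([x],\infty) = 0$ is impossible for $x \in K^0$. I expect making this chain decomposition fully rigorous --- in particular ruling out that multiple trips through $\infty$ ever help --- to be the main obstacle; the remaining steps are routine point-set topology.

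Next I would identify the quotient topology with the one-point compactification of $K^0$ directly from the definition of the quotient map $\pi : M \to M/\sim$. For $U$ with $\infty \notin U$ one has $\pi^{-1}(U) \subseteq K^0$, so $U$ is quotient-open iff it is open in $K^0$. For $U \ni \infty$, writing $V = U \setminus \{\infty\} \subseteq K^0$, one computes $\pi^{-1}(U) = (M \setminus K^0) \cup V$, whose complement is $K^0 \setminus V$; this is quotient-open iff $K^0 \setminus V$ is closed in $M$, and since $K^0 \setminus V \subseteq K$ is then a closed subset of a compact set it is compact. As compact subsets of the Hausdorff space $M$ are closed, the converse holds too, so the open sets containing $\infty$ are exactly $\{\infty\} \cup (K^0 \setminus C)$ with $C \subseteq K^0$ compact --- precisely the one-point compactification neighbourhoods of $\infty$.

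Finally I would check that $d^{g,K}$ induces this same topology by matching neighbourhood bases pointwise. At $[x]$ with $x \in K^0$, choosing $r < \rho(x)$ the ball excludes $\infty$ and, by the formula, coincides with the $d^g$-ball of radius $r$, so the metric and subspace topologies agree on $K^0$. At $\infty$ the ball of radius $r$ is $\{\infty\} \cup \{[y] : \rho(y) < r\}$, whose complement $\{\rho \ge r\}$ is closed in $M$ and contained in $K$, hence compact; conversely, given a compact $C \subseteq K^0$, continuity and positivity of $\rho$ give $r_0 = \min_C \rho > 0$, so the ball of radius $r_0$ about $\infty$ lies inside $\{\infty\} \cup (K^0 \setminus C)$. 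Thus the metric balls about $\infty$ form a neighbourhood base equal to that of the one-point compactification, completing the proof that $d^{g,K}$ is a metric inducing the quotient topology, which in turn equals the one-point compactification topology on $K^0$.
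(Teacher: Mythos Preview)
Your proof is correct and follows essentially the same route as the paper: both derive the closed-form expression $d^{g,K}([x],[y]) = \min\{d^g(x,y), \rho(x)+\rho(y)\}$ via a chain-reduction argument, use $\rho>0$ on $K^0$ for non-degeneracy, and then match the metric topology with the quotient/one-point-compactification topology by checking neighbourhood bases. Your explicit use of the continuous function $\rho$ and the extreme value theorem ($\min_C \rho > 0$) to handle neighbourhoods of $\infty$ is a bit cleaner than the paper's sequential-compactness contradiction argument, but the underlying strategy is the same.
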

\begin{proof}
We'll call a sequence of points $(p_i,q_i)$ satisfying \eqref{eq:chain} a {\em chain}. We will call the sum of $d(p_i,q_i)$ the length of the chain. We will say that a chain can be shortened if there is another chain joining $x$ and $y$ with a lower value for $n$ and with a smaller length. The infimum can be obtained by taking the infimum over chains which cannot be shortened.

Let us show that the maximum length of a chain that cannot be shortened is $2$. Suppose that one for one $q_i$ with $i<n$, we have $q_i \in K^0$. Then $p_{i+1}=q_i$. So $d(p_i,q_i)+d(p_{i+1},q_{i+q})=d(p_i,q_i)+d(q_i,q_{i+1})\geq d(p_i,q_{i+1})$. So if a chain cannot be shortened $q_i$ cannot be in $K^0$ for any $i<n$. We deduce that a chain that cannot be shortened must have $p_{i+1} \notin K^0$ for any $i<n$. So either ${i+1}=n$ or else we could simply drop the term $d(p_{i+1},q_{i+1})$ to shorten our chain. Thus $n\leq 2$. We have shown that
\[
d^{g,K}([x],[y])=\min\{ d^g(x,y), d^g(x,M\setminus K^0) + d^g(y,M\setminus K^0)\}
\]
where $d^g(x,A)$ denotes the distance between a point $x$ and a set $A$.
To show that $d^{g,K}$ is a metric we just need to show that $d^{g,K}([x],[y])$ implies $[x]=[y]$. Since $K^0$ is open, if $x \in K^0$, $d^g(x, M\setminus K^0)>0$. So $d^{g,K}(x,y)=0$ if and only if either $x=y$ or both $x \notin K^0$ and $y \notin K^0$.

By definition of the quotient topology, a set $U \in M/\sim$ is open if and only if $\pi^{-1}U$ is open in $M$ where $\pi$ is the projection map. 

To show that the metric induces the quotient topology, we will show
\begin{enumerate}[(i)]
	\item that the $\epsilon$-balls of the metric are open sets in the quotient topology
	\item given any open set $U$ in the quotient topology and any point $[x] \in U$ we can find an $\epsilon$-ball around $x$ that is contained in $U$.
\end{enumerate}

Let us begin with item (i).

Write $\infty$ for the equivalence class of any $y \notin K^0$.
Let $B_\epsilon^{d^{g,K}}(\infty)=\{[x]:d^{g,K}([x],[\infty])<\epsilon\}$.
So
\[
\pi^{-1}B_\epsilon^{d^{g,K}}(\infty)=\{x:d^g(x,M\setminus K^0) < \epsilon \}
= \cup_{y \in M\setminus K^0}\{x:d^g(x,y)< \epsilon \}.
\]
This is an open set since uncountable unions of open sets are open.
Hence $B_\epsilon^{d^{g,K}}(\infty)$ is open in the quotient topology.

Let $x$ be a point in $K^0$ and let $\epsilon_1=d^g(x,M\setminus K^0)$.
If $\epsilon \leq \epsilon_1$
\[
\pi^{-1}B_\epsilon^{d^{g,K}}([x])=B_\epsilon^{d^g}(x),
\]
otherwise
\[
\pi^{-1}B_\epsilon^{d^{g,K}}([x])=B_\epsilon^{d^g}(x) \cup \pi^{-1} B_{\epsilon-\epsilon_1}^{d^{g,K}}(\infty).
\]	
In either case we see that $\pi^{-1}B_\epsilon^{d,g}([x])$ is open.
Hence $B_\epsilon^{d^{g,K}}([x])$ is open in the quotient topology.

Now let us prove item (ii).

Let us first consider the case when $x \in K^0$. Since $K^0$ is open, we can find an $\epsilon_1$ such that $B_{\epsilon_1}^{d^g}(x) \subseteq K^0$. Since $\pi^{-1}U$ is open, we can find an $\epsilon_2$ such that  $B_{\epsilon_2}^{d^g}(x) \subseteq \pi^{-1}U$. Let $\epsilon=\min\{\epsilon_1, \epsilon_2\}$ then $B_\epsilon^{d^g}(x) \subseteq K^0 \cap \pi^{-1}U$. So $B_\epsilon^{d^{g,K}}([x]) = \pi( B_\epsilon^{d^g}(x)) \subseteq U$.

We now suppose $x \notin K^0$, so $[x]=\infty$. We need to show that for some $\epsilon>0$, $B^{d^{g,K}}_\epsilon(\infty) \subseteq U$. Equivalently we must show that for some $\epsilon>0$, $d^g(x, M\setminus K^0)<\epsilon \implies x \in \pi^{-1}{U}$. Equivalently, we must show that for some $\epsilon>0$, $x \in M\setminus \pi^{-1}U \implies d^g(x, M\setminus K^0)\geq \epsilon$.

We suppose for a contradiction that for all $\epsilon>0$ there exists $x \in M \setminus \pi^{-1}(U)$ with $d^g(x,M \setminus K^0) < \epsilon$. So we can define a sequence $x_n$ of points in $M\setminus \pi^{-1}(U)$ with $d^g(x,M \setminus K^0) < \frac{1}{n}$. Since $x_n \in M\setminus \pi^{-1}(U)$ and $\infty \in U$  we see that $x_n \in K^0$. $K$ is compact, so $x_n$ has a convergent subsequence. We may therefore assume without loss of generality that $x_n$ converges to some $x_\infty$. Given any $e>0$ we can find an $N$ such that for all $n\geq N$, $d(x_n, x_\infty)<e$ and hence $d(x\infty, M \setminus K^0) < e + \frac{1}{n}$. We deduce that $d(x_\infty, M\setminus K^0)=0$. Since $M\setminus K^0$ is closed, $x_\infty \in M\setminus K^0$. So $\pi(x_\infty)=\infty$ and so $x_\infty \in \pi^{-1}U$. Since $\pi^{-1}U$ is open, for some $e>0$, $B_e(x\infty) \subseteq \pi^{-1}U$. Now, for large enough $n$, $x_n \in B_e(x_\infty) \subseteq \pi^{-1}U$. This contradicts the defining property of $x_n$ that $x_n \in M \setminus \pi^{-1}(U)$.

At this stage we've proved that $d^{g,K}$ is a metric and it coincides with the quotient topology. We've also asserted that this quotient topology is equal to the one-point compactification topology. Let's prove that.

In the quotient topology, the open sets which contain $\infty$ are the image under $\pi$ of open sets which contain $M \setminus K^0$. That's the same thing as the complement of the closed sets which don't intersect $M \setminus K^0$. This is the same thing as the complement of the closed sets in $K^0$. Since any closed set of $K^0$ will be a closed subset of $K$ it will be compact. So the open sets containing $\infty$ are the image under $\pi$ of the complement of the compact sets in $K^0$. This now corresponds to the usual definition of the open sets in the one-point compactification topology.
\end{proof}

We now wish to show that Theorem \ref{th:convergence} implies convergence in mean square on compacts for SDEs with smooth coefficients. As a preliminary step, we prove some basic results on the equivalence of metrics on compact manifolds that are well-known but whose proofs we found hard to track down.

We recall the definition of strong equivalence of metrics.
\begin{definition}
Two metrics $d^1$ and $d^2$ are said to be strongly equivalent if there exist constants $\lambda>0$ and $\mu>0$ such that
\[
\lambda d^1(x,y) \leq d^2(x,y) \leq \mu d^1(x,y)
\]
\end{definition}
We make a new definition:
\begin{definition}
Two metrics $d^1$, $d^2$ for a topological space $X$ are said to be locally strongly equivalent if for each $x \in X$ there is a neighbourhood of $X$ over which $d^1$ and $d^2$ are strongly equivalent.
\end{definition}
For compact manifolds these definitions coincide.
\begin{lemma}
If $d^1$ and $d^2$ are locally strongly equivalent metrics for a compact topological space $X$ then $d^1$ and $d^2$ are strongly equivalent.
\end{lemma}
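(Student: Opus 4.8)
The plan is to reduce the global statement to a compactness argument on $X \times X$, splitting the pairs $(x,y)$ into those close to the diagonal, which are controlled by the local hypothesis, and those far from it, which are controlled by compactness alone. Throughout I use that both $d^1$ and $d^2$ metrize the topology of $X$, so that each is a continuous function $X\times X \to [0,\infty)$, and that $X\times X$ is compact.

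First I would unpack the hypothesis: each point $x \in X$ has an open neighbourhood $U_x$ on which $d^1$ and $d^2$ are strongly equivalent, say with constants $\lambda_x, \mu_x$. The family $\{U_x\}_{x\in X}$ is an open cover of the compact space $X$, so I extract a finite subcover $U_1,\dots,U_n$ with associated constants $(\lambda_i,\mu_i)$, and set $\lambda' := \min_i \lambda_i > 0$ and $\mu' := \max_i \mu_i$. Then for any pair $(x,y)$ lying in a common $U_i$ we already have $\lambda'\, d^1(x,y) \le d^2(x,y) \le \mu'\, d^1(x,y)$.

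Next I would isolate the pairs not covered this way. Put $V := \bigcup_{i=1}^n (U_i \times U_i)$, an open subset of $X\times X$ containing the diagonal $\Delta$, and let $C := (X\times X)\setminus V$, which is closed in the compact space $X\times X$ and hence compact. The crucial observation, and the step I expect to be the real content, is that $C$ is disjoint from $\Delta$: if $(x,x)\in C$ then $x$ lies in no $U_i$, contradicting that the $U_i$ cover $X$. Consequently every $(x,y)\in C$ satisfies $x\ne y$, so that $d^1(x,y)>0$ and $d^2(x,y)>0$ there. This is exactly what makes the large-scale pairs tractable: on $C$ the ratio $r(x,y) := d^2(x,y)/d^1(x,y)$ is a well-defined continuous function, the denominator never vanishing, so by compactness it attains a strictly positive minimum $m$ and a finite maximum $M$, giving $m\, d^1 \le d^2 \le M\, d^1$ on $C$.

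Finally I would combine the two regimes. Since every pair $(x,y)$ lies either in a common $U_i$, that is in $V$, or else in $C$, setting $\lambda := \min(\lambda', m) > 0$ and $\mu := \max(\mu', M)$ yields $\lambda\, d^1(x,y) \le d^2(x,y) \le \mu\, d^1(x,y)$ for all $x,y\in X$, which is the asserted strong equivalence. The only genuine subtlety is the treatment of $C$; the Lebesgue-number lemma would give an alternative route, since any $d^1$-ball of small enough radius would then sit inside a single $U_i$, but the direct compactness of the off-diagonal set $C$ is cleaner and avoids singling out a reference metric.
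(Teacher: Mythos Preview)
Your proof is correct and takes a genuinely different route from the paper's. The paper argues by contradiction: assuming no uniform $\mu$ exists, it produces sequences $x_n,y_n$ with $d^2(x_n,y_n) > n\, d^1(x_n,y_n)$, passes to convergent subsequences by compactness, notes that boundedness of $d^2$ forces $d^1(x_n,y_n)\to 0$ so the two limits coincide, and then derives a contradiction from the local equivalence near that common limit point. You instead work directly on $X\times X$, splitting it into the open set $V=\bigcup U_i\times U_i$ where a finite subcover gives uniform local constants, and the compact off-diagonal remainder $C$ where the continuous, nowhere-vanishing ratio $d^2/d^1$ attains its extrema. Your argument is more constructive and makes the global constants explicit; the paper's sequential argument is marginally shorter and avoids introducing the product space. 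One small point worth a clause: if $C$ is empty the ratio step is vacuous and $\lambda',\mu'$ already do the job.
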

\begin{proof}
It suffices to prove that there exists $\mu>0$ such that $d^1(x,y) \leq \mu d^2(x,y)$.	 For we may then prove that there exists $\lambda^\prime>0$ such that $d^1(x,y)\leq \lambda^\prime d^2(x,y)$ which we rearrange to obtain $\frac{1}{\lambda^\prime}d^1(x,y) \leq d^2(x,y)$. Take $\lambda=\frac{1}{\lambda^\prime}$.
	
So suppose for a contradiction that for all $\mu$ we can find $x$ and $y$ 
with $d^2(x,y) > \mu d^1(x,y)$. We can then choose sequences $x_n$ and $y_n$ with $d^2(x_n,y_n) > n d^1(x_n,y_n)$. By compactness of $X$ we may assume without loss of generality that $x_n$ converges to some $x_\infty$ as $n\to \infty$ and that $y_n$ converges to $y_\infty$. Since $X$ is compact, $d^2(x,y)$ is bounded above by some finite diameter $D$. So $d^1(x_n,y_n)<\frac{D}{n}$ which tends to $0$ as $n\to \infty$. We deduce that $x_\infty=y_\infty$.

We now choose a neighbourhood $U$ of $x_\infty$ on which $d^1$ and $d^2$ are strongly equivalent. So there exists some $\mu^\prime>0$ such that
\[
d^2(x,y) < \mu^\prime d^1(x,y) 
\]
whenever $x$ and $y$ lie in $U$. For large enough $n$, both $x_n$ and $y_n$ will lie in $U$. So $d^2(x_n,y_n) < \mu^\prime d^1(x_n,y_n)$. Taking $n>\mu^\prime$, this contradicts the fact that $d^2(x_n,y_n) > n d^1(x_n,y_n)$.
\end{proof}
\begin{corollary}
Let $M$ be a compact manifold embedded in $\R^P$. Let $d_E$ denote the metric on $M$ induced by the Euclidean metric on $\R^P$. Let $g_E$ denote the Riemannian metric induced on $M$. Then $d_E$ and $d^{g_E}$ are equivalent.
\end{corollary}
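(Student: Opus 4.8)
The plan is to reduce the global assertion to a purely local one and then invoke the preceding lemma, which states that locally strongly equivalent metrics on a compact space are in fact strongly equivalent. It therefore suffices to show that every point $p\in M$ has a neighbourhood on which $d_E$ and $d^{g_E}$ are strongly equivalent; this then upgrades automatically to a global strong equivalence (and in particular to the topological equivalence asserted).

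One of the two inequalities is free and holds globally. Since $g_E$ is the metric \emph{induced} by the embedding, the $g_E$-length of any curve lying in $M$ equals its Euclidean length regarded as a curve in $\R^P$. As the straight segment is the shortest $\R^P$-path between its endpoints, every curve in $M$ joining $x$ to $y$ has length at least $\|x-y\|_{\R^P}=d_E(x,y)$; taking the infimum over such curves gives $d_E(x,y)\le d^{g_E}(x,y)$ for all $x,y$, so I may take the lower constant $\lambda=1$ everywhere with no recourse to charts.

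For the reverse inequality I would fix $p$, choose a chart $\phi:V\to W\subseteq\R^{\dim M}$ with $\phi(p)=0$, shrink so that $W$ is a convex coordinate ball on whose compact closure $\bar W$ all maps extend smoothly, and write $F=\iota\circ\phi^{-1}:\bar W\to\R^P$ for the embedding in these coordinates. Two elementary estimates on $\bar W$ suffice. First, positive-definiteness and continuity of the metric components $(g_E)_{ij}$ yield uniform eigenvalue bounds $\lambda_0|v|^2\le (g_E)_{ij}v^iv^j\le\Lambda_0|v|^2$; pushing the straight coordinate segment from $\phi(x)$ to $\phi(y)$ forward into $M$ (legitimate as $W$ is convex) and measuring its length gives $d^{g_E}(x,y)\le\sqrt{\Lambda_0}\,|\phi(x)-\phi(y)|$, where I use only that restricting the admissible competitor paths can merely increase the intrinsic distance, so that no control over the wanderings of true geodesics is required. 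Second, because $\iota$ is an immersion, $DF$ admits a uniform lower bound $\|DF(a)v\|\ge c_0|v|$ on $\bar W$; combined with a uniform bound on $\|D^2F\|$ via Taylor's theorem, this gives $\|F(a)-F(b)\|\ge\tfrac{c_0}{2}|a-b|$ on a sufficiently small ball, i.e. $|\phi(x)-\phi(y)|\le\tfrac{2}{c_0}\,d_E(x,y)$. Chaining the two estimates produces $d^{g_E}(x,y)\le\mu\,d_E(x,y)$ with $\mu=2\sqrt{\Lambda_0}/c_0$ on a neighbourhood of $p$, which together with the free inequality is the desired local strong equivalence.

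The only genuinely nonroutine ingredient is the local lower Lipschitz bound $\|F(a)-F(b)\|\ge c|a-b|$, namely the standard fact that an immersion is locally a bi-Lipschitz embedding; it is proved by comparing $F(b)-F(a)$ with its linearisation $DF(a)(b-a)$ and absorbing the quadratic remainder on a small enough ball. Everything else is bookkeeping: the eigenvalue bounds are immediate from compactness, the inequality $d_E\le d^{g_E}$ needs no coordinates at all, and the passage from local to global strong equivalence is carried entirely by the preceding lemma. I therefore expect the write-up to be short, with the immersion estimate as its only substantive step.
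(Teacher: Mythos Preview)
Your proposal is correct and follows essentially the same approach as the paper: establish local strong equivalence of $d_E$ and $d^{g_E}$ using Taylor's theorem (specifically, the immersion's local bi-Lipschitz property), then invoke the preceding lemma to upgrade to global strong equivalence on the compact $M$. The paper's proof is a two-line sketch of exactly this argument, whereas you have spelled out the details; in particular your observation that the inequality $d_E\le d^{g_E}$ is free globally, and your identification of the immersion lower-Lipschitz bound as the one substantive step, are both accurate refinements of what the paper leaves implicit.
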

\begin{proof}
Using Taylor's theorem, one can show that the $d_E$ and $d^{g_E}$ are locally strongly equivalent. This is simply a formalization of the argument that $d_E$ is a first approximation to $d^{g_E}$ with Taylor's theorem providing precise estimates.
\end{proof}
\begin{corollary}
Let $M$ be a compact manifold, let $g_1$ and $g_2$ be Riemannian metrics
on $M$ then $d^{g_1}$ and $d^{g_2}$ are equivalent.
\end{corollary}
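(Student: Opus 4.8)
The plan is to reduce the statement to a pointwise comparison of the two quadratic forms $g_1$ and $g_2$ and then to pass from lengths of curves to Riemannian distances. At each point $p$, both $g_1$ and $g_2$ are positive-definite symmetric bilinear forms on $T_p M$, and the whole question is how their ratio behaves as $p$ ranges over $M$. Compactness of $M$ is what makes this ratio uniformly controlled, so it is the compactness hypothesis that does the essential work.

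First I would introduce the $g_1$-unit tangent bundle $S = \{(p,v) : p \in M,\ v \in T_p M,\ g_1(v,v) = 1\}$. Since $M$ is compact, $S$ is a compact sphere bundle over $M$. The map $(p,v) \mapsto g_2(v,v)$ is continuous and strictly positive on $S$, so it attains a positive minimum $c > 0$ and a finite maximum $C$. By homogeneity of the quadratic forms this yields the pointwise bound
\[
c\, g_1(v,v) \leq g_2(v,v) \leq C\, g_1(v,v)
\]
for every tangent vector $v$ at every point of $M$.

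Next I would translate this into a comparison of curve lengths. For any piecewise-$C^1$ curve $\gamma$, taking square roots of the above inequality along the velocity $\dot\gamma$ and integrating gives $\sqrt{c}\, L_{g_1}(\gamma) \leq L_{g_2}(\gamma) \leq \sqrt{C}\, L_{g_1}(\gamma)$. Taking the infimum over all curves joining $x$ to $y$, and noting that exactly the same family of admissible curves defines both distances, gives
\[
\sqrt{c}\, d^{g_1}(x,y) \leq d^{g_2}(x,y) \leq \sqrt{C}\, d^{g_1}(x,y),
\]
which is precisely strong equivalence with $\lambda = \sqrt{c}$ and $\mu = \sqrt{C}$.

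The main point to be careful about is the passage from the pointwise tensor bound to the distance bound: one must check that the length functional scales correctly under the pointwise inequality and that taking the infimum over the common family of curves preserves the two-sided estimate. This is routine rather than genuinely hard, so the compactness of $S$ is really the only substantive ingredient. One could alternatively prove the corollary by showing that $d^{g_1}$ and $d^{g_2}$ are locally strongly equivalent — working in a chart where the matrices $(g_1)_{ij}$ and $(g_2)_{ij}$ are comparable on a compact coordinate neighbourhood — and then invoking the preceding lemma; in that route the delicate step is verifying that minimizing curves between nearby points remain inside the chart, a subtlety which the direct global argument above sidesteps entirely.
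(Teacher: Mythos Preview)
Your argument is correct. The paper's own proof is a single sentence, ``Another application of Taylor's theorem,'' which is meant to parallel the proof of the preceding corollary: one works in charts to show that $d^{g_1}$ and $d^{g_2}$ are locally strongly equivalent, and then invokes the lemma that locally strongly equivalent metrics on a compact space are strongly equivalent. Your main argument takes a genuinely different and more direct route: you use compactness of the $g_1$-unit sphere bundle to obtain a \emph{global} pointwise comparison $c\,g_1 \le g_2 \le C\,g_1$ of the quadratic forms, and then pass straight to the distance inequality by integrating along curves, bypassing the local-to-global lemma entirely. You in fact anticipate the paper's approach in your final paragraph and correctly flag its one delicate point (that short minimizing curves must stay in the chart). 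Your global argument is cleaner and self-contained; the paper's route has the virtue of reusing the machinery it has just set up.
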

\begin{proof}
Another application of Taylor's theorem.
\end{proof}

\begin{theorem}
Let $M$ be a finite dimensional manifold and let $\gamma_x$ be a smoothly varying field of $2$-jets defining an SDE on $M$. Then the scheme \eqref{eq:schemediscretemultiW} converges in mean-square on compacts to the the classical It\^o solution of the corresponding SDE \eqref{eq:equivalentClassical}.
\label{th:meanSquareOnCompacts}
\end{theorem}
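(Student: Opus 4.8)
The plan is to deduce this coordinate-free statement from the coordinate-dependent $L^2(\Pb)$ estimate of Theorem \ref{th:convergence} by a localization (cutoff) argument, using the metric-equivalence corollaries proved just above to collapse the quantification over all Riemannian metrics to a single convenient one. First I would fix a compact $K \subseteq M$ and observe that, since any two Riemannian metrics are strongly equivalent on $K$, convergence in $\tilde{d}^{g_1,K}$ and in $\tilde{d}^{g_2,K}$ differ only by the multiplicative constants relating $d^{g_1}$ and $d^{g_2}$ on $K$; hence an estimate for one metric yields one for every metric up to a constant, which is all that is needed to beat an arbitrary $\epsilon$. I would take $g$ to be the metric induced by a Whitney embedding $\iota:M \hookrightarrow \R^P$, so that (again by the corollaries) $d^g$ is strongly equivalent on $K$ to the chordal distance $\|\iota(x)-\iota(y)\|$. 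The whole problem then reduces to estimating Euclidean distances in the single global chart $\R^P$.

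Next I would localize the coefficients so that Theorem \ref{th:convergence} applies verbatim. Choosing a compact $K'$ with $K \subseteq (K')^0$ and a smooth compactly supported cutoff on $\R^P$ equal to $1$ on $\iota(K')$, I would multiply the (tubular-neighbourhood-extended) family $\gamma_x$ by this cutoff to obtain a modified family $\hat{\gamma}_x:\R^d \to \R^P$ whose first three derivatives are globally bounded and Lipschitz. Smoothness of the original $\gamma$ supplies these bounds on $K'$, which is exactly the remark preceding the theorem. Applying Theorem \ref{th:convergence} yields modified processes $\hat X^N$ and $\hat X$ with $\sup_{t\le T} E\|\hat X^N_t-\hat X_t\|^2 \le C\,\delta t$; I would then upgrade this to the uniform-in-time bound $E(\sup_{t\le T}\|\hat X^N_t - \hat X_t\|^2)\to 0$ by applying Doob's maximal inequality to the martingale part of the difference, a routine strengthening of the argument in the proof of Theorem \ref{th:convergence}. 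Since the modified and original coefficients coincide on a neighbourhood of $K$, the killed processes agree, $(X^N)^K_t = (\hat X^N)^K_t$ by construction of the scheme and $X^K_t = \hat X^K_t$ by pathwise uniqueness of the smooth limiting SDE (using \cite{hsu}); so it suffices to prove the claim for the globally bounded modified processes.

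I would then convert the $L^2$ estimate for $\hat X^N \to \hat X$ into convergence of the killed processes in $\tilde{d}^{g,K}$, which by Lemma \ref{lemma:quotientMetric} takes place in the one-point compactification of $K^0$. As $K$ is compact, $\tilde{d}^{g,K}$ is bounded by $\mathrm{diam}_g K < \infty$, so dominated convergence is available. Splitting according to the killing status at time $t$: when both processes are alive, $\tilde{d}^{g,K}\le d^g(\hat X^N_t,\hat X_t)$ is controlled by the $L^2$ bound and metric equivalence; when both are killed the distance is $0$; the only delicate event is when exactly one of the two has left $K^0$ by time $t$.

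The heart of the argument, and the step I expect to be the main obstacle, is precisely this last event, where $\tilde{d}^{g,K}([x],\infty)=d^g(x,M\setminus K^0)$ measures the distance of the surviving process to the boundary. The uniform closeness $\sup_{s\le t}\|\hat X^N_s - \hat X_s\|$ forces both processes to lie within a vanishing distance of $\partial K^0$ at the first exit time of whichever dies first, so a persistent discrepancy can occur only through a \emph{shallow} excursion of the limiting diffusion across $\partial K^0$ that returns to $K^0$ while the scheme stays inside. I would bound the probability of such excursions by the probability that $\hat X$ enters the $\eta$-neighbourhood of $\partial K^0$ before time $t$, with $\eta$ the uniform error, and argue that this tends to $0$ as the mesh refines because the limiting diffusion spends Lebesgue-null time on $\partial K^0$; combined with the boundedness of $\tilde{d}^{g,K}$ and dominated convergence, the contribution of the exceptional event vanishes. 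Making this shallow-excursion estimate fully rigorous for possibly degenerate SDEs is the most technical point, and is where I would concentrate the remaining effort.
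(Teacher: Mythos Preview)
Your overall architecture --- Whitney embed, extend the jets by a compactly supported cutoff so that Theorem~\ref{th:convergence} applies in the ambient $\R^P$, then transfer back via the strong-equivalence corollaries --- is exactly the paper's route. The paper's argument is in fact much terser than yours: after noting metric equivalence it simply asserts the final $\tilde d^{g,K}$-inequality for the killed processes without further justification, so you have gone beyond the paper in isolating the shallow-excursion obstacle and recognising that the passage from $E\|\hat X^i_t-\hat X_t\|^2\to 0$ to the killed-process estimate is not automatic.

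Your proposed mechanism for closing that gap, however, does not work as written. The probability that $\hat X$ enters the $\eta$-neighbourhood of $\partial K^0$ before time $t$ does not tend to $0$ as $\eta\downarrow 0$; it tends to the (typically positive) probability that $\hat X$ hits $\partial K^0$. Likewise ``Lebesgue-null occupation time on $\partial K^0$'' is the wrong invariant: the issue is not how long the diffusion sits on the boundary but whether first exit from $K^0$ is a stable functional of the path. What you actually need is $P(\tau\le t<\tau^i)+P(\tau^i\le t<\tau)\to 0$, where $\tau,\tau^i$ are the first-exit times of $\hat X,\hat X^i$ from $K^0$; combined with the boundedness of $\tilde d^{g,K}$ this disposes of the mismatch case. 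This exit-time stability follows from your uniform sup bound \emph{provided} the limiting diffusion almost surely exits $\overline{K^0}$ immediately upon hitting $\partial K^0$ --- a standard fact for non-degenerate coefficients and smooth $\partial K$, but something that can fail for degenerate SDEs on irregular compacts, and a hypothesis that neither the paper's statement nor its proof supplies.
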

\begin{proof}
By the Whitney embedding theorem, we can find an embedding of our manifold in $\R^r$ for large enough $r$. We may then smoothly extend an SDE defined on $K$ to give an SDE defined on the whole of $\R^r$ and which is trivial ouside of some compact subset $K^\prime$ in $\R^r$. Convergence of our scheme for this SDE in the Euclidean metric now follows by Theorem \ref{th:convergence}. Since $K$ is compact, the Euclidean metric on $K$ is strongly equivalent to $d^{g_E}$ where $g_E$ is the Riemannian metric induced on $M$ by the Euclidean metric. This in turn is strongly equivalent on $K$ to $d^g$ for any Riemannian metric $g$ on $M$. Let us write $X_t$ for the solution of the It\^o SDE and $X_t^i$ for convergent sequence of approximations obtained using our curved scheme. We see that there exists $N \in \mathbb{N}$ such that if $i \geq N$
\[
E(\tilde{d}^{g,K}((X^i)^K_t(\omega), X^K_t(\omega))^2 ) \leq \epsilon.
\]		
\end{proof}

% End of arxiv only content
}{}

\end{appendix}
\renewcommand{\newblock}{}

\bibliographystyle{plain}

% John had the line below. I have removed the ``../''
%\bibliography{../drawingsdes}
\bibliography{drawingsdes}

\end{document}

\end{document}